\titleformat{\section}{\sf\large}{\thesection.}{1em}{\filcenter}
\titleformat{\subsection}{\sf}{\thesubsection}{1em}{}
\titleformat{\subsubsection}{\sf}{\thesubsubsection}{1em}{}
\newcolumntype{L}[1]{>{\raggedright\let\newline\\\arraybackslash\hspace{0pt}}m{#1}}
\newcolumntype{C}[1]{>{\centering\let\newline\\\arraybackslash\hspace{0pt}}m{#1}}
\newcolumntype{R}[1]{>{\raggedleft\let\newline\\\arraybackslash\hspace{0pt}}m{#1}}
\numberwithin{equation}{section}
\def\bbR{\mathbb{R}}
\newtheorem{theorem}{Theorem}[section]
\newtheorem{lemma}[theorem]{Lemma}
\newtheorem{corollary}[theorem]{Corollary}
\def\theequation{\arabic{section}.\arabic{equation}}
\title{Minimax Optimal Bayesian Aggregation}
\author{Yun Yang and David Dunson}
\date{}
\begin{document}
\bibliographystyle{chicago}
\maketitle

\begin{abstract}
It is generally believed that ensemble approaches, which combine multiple algorithms or models, can outperform any single algorithm at machine learning tasks, such as prediction.  In this paper, we propose Bayesian convex and linear aggregation approaches motivated by regression applications.  We show that the proposed approach is minimax optimal when the true data-generating model is a convex or linear combination of models in the list.  Moreover, the method can adapt to sparsity structure in which certain models should receive zero weights, and the method is tuning parameter free unlike competitors.  More generally, under an M-open view when the truth falls
outside the space of all convex/linear combinations, our theory suggests that the posterior measure tends to concentrate on the best approximation of the truth at the minimax rate.  We illustrate the method through simulation studies and several applications.

{\it Key words}: {\small Dirichlet aggregation; Ensemble learning; Minimax risk; Misspecification; Model averaging; Shrinkage prior.}
\end{abstract}

\section{Introduction}
In many applications, it is not at all clear how to pick one most suitable method out of a list of possible models or learning algorithms $\mathcal{M} = \{ \mathcal{M}_1,\ldots,\mathcal{M}_M \}$.  Each model/algorithm has its own set of implicit or explicit assumptions under which that approach will obtain at or near optimal performance.  However, in practice verifying which if any of these assumptions hold for a real application is problematic.  Hence, it is of substantial practical importance to have an aggregating mechanism that can automatically combine the estimators $\hat{f}_1,\ldots, \hat{f}_M$ obtained from the $M$ different approaches $\mathcal{M}_1,\ldots,\mathcal{M}_M$, with the aggregated estimator potentially better than any single one.

Towards this goal, three main aggregation strategies receive most attention in the literature: model selection aggregation (MSA), convex aggregation (CA) and linear aggregation (LA), as first stated by \citet{Nemirovski2000}. MSA aims at selecting the optimal single estimator from the list; CA considers searching for the optimal convex combination of the estimators; and LA focuses on selecting the optimal linear combination. Although there is an extensive literature \citep{Juditsky2000,Tsybakov2003,Wegkamp2003,Yuhong2000,Yuhong2001,Yuhong2004,
Bunea2008,Bunea2007,Guedj2013,vanderlann2007} on aggregation, there has been limited consideration of Bayesian approaches.

In this paper, we study Bayesian aggregation procedures and their performance in regression. Consider the regression model
\begin{align}\label{eq:truth}
    Y_i=f(X_i)+\epsilon_i,\quad i=1,\ldots,n,
\end{align}
where $Y_i$ is the response variable, $f:\mathcal{X}\to\bbR$ is an unknown regression function, $\mathcal{X}$ is the feature space, $X_i$'s are the fixed- or random-designed elements in $\mathcal{X}$ and the errors are iid Gaussian.

Aggregation procedures typically start with randomly dividing the sample $D_n=\{(X_1,Y_1),$ $\ldots,$ $(X_n,Y_n)\}$ into a training set for constructing estimators $\hat{f}_1,\ldots,\hat{f}_M$, and a learning set for constructing
$\hat{f}$. Our primary interest is in the aggregation step, so we adopt the convention \citep{Bunea2007} of fixing the training set and treating the estimators $\hat{f}_1,\ldots,\hat{f}_M$ as fixed functions $f_1,\ldots,f_M$. Our results can also be translated to the context where the fixed functions $f_1,\ldots,f_M$ are considered as a functional basis \citep{Juditsky2000}, either orthonormal or overcomplete, or as ``weak learners" \citep{vanderlann2007}. For example, high-dimensional linear regression is a special case of LA where $f_j$ maps an $M$-dimensional vector into its $j$th component.

Bayesian model averaging (BMA) \citep{Hoeting1999} provides an approach for aggregation, placing a prior over the ensemble and then updating using available data to obtain posterior model probabilities. For BMA, $\hat{f}$ can be constructed as a convex combination of estimates $\hat{f}_1,\ldots,\hat{f}_M$ obtained under each model, with weights corresponding to the posterior model probabilities.  If the true data generating model $f_0$ is one of the models in the pre-specified list (``$\mathcal{M}$-closed" view), then as the sample size increases the weight on $f_0$ will typically converge to one.
With a uniform prior over $\mathcal{M}$ in the regression setting with Gaussian noise, $\hat{f}$ coincides with the exponentially weighted aggregates \citep{Tsybakov2003}. However, BMA relies on the assumption that $\mathcal{M}$ contains the true model. If this assumption is violated (``$\mathcal{M}$-open"), then $\hat{f}$ tends to converge to the single model in $\mathcal{M}$ that is closest to the true model in Kullback-Leibler (KL) divergence. For example, when $f_0$ is a weighted average of $f_1$ and $f_2$, under our regression setting $\hat{f}$ will converge to $f\in\{f_1,f_2\}$ that minimizes $||f-f_0||_n^2=n^{-1}\sum_{i=1}^n|f(X_i)-f_0(X_i)|^2$ under fixed design or $||f-f_0||^2_Q=E_Q|f(X)-f_0(X)|^2$ under random design where $X\sim Q$. Henceforth, we use the notation $||\cdot||$ to denote $||\cdot||_n$ or $||\cdot||_Q$ depending on the context.

In this paper, we primarily focus on Bayesian procedures for CA and LA. Let
$$
\mathcal{F}^{H}=\big\{f_{\lambda}=\sum_{j=1}^M\lambda_jf_{j}: \lambda=(\lambda_1,\ldots,\lambda_M)\in H\big\}
$$
be the space of all aggregated estimators for $f_0$ with index set $H$. For CA, $H$ takes the form of $\Lambda=\{(\lambda_1,\ldots,\lambda_M):\lambda_j\geq0, j=1,\ldots,M, \ \sum_{j=1}^M\lambda_j=1\}$ and for LA, $H=\Omega=\{(\lambda_1,\ldots,\lambda_M):\lambda_j\in\bbR,j=1,\ldots,M, \ \sum_{j=1}^M|\lambda_j|\leq L\}$, where $L>0$ can be unknown but is finite. In addition, for both CA and LA we consider sparse aggregation with $\mathcal{F}^{H_s}$, where an extra sparsity structure $||\lambda||_0=s$ is imposed on the weight $\lambda\in H_s=\{\lambda\in H:||\lambda||_0=s\}$. Here, for a vector $\theta\in\bbR^M$, we use $||\theta||_p=(\sum_{j=1}^M|\theta_j|^p)^{1/p}$ to denotes its $l_p$-norm for $0\leq p\leq \infty$. In particular, $||\theta||_0$ is the number of nonzero components of $\theta$. The sparsity level $s$ is allowed to be unknown and expected to be learned from data. In the sequel, we use the notation $f_{\lambda^\ast}$ to denote the best $||\cdot||$-approximation of $f_0$ in $\mathcal{F}^{H}$. Note that if $f_0\in\mathcal{F}^{H}$, then $f_0=f_{\lambda^\ast}$.

One primary contribution of this work is to propose a new class of priors, called Dirichlet aggregation (DA) priors, for Bayesian aggregation. Bayesian approaches with DA priors are shown to lead to the minimax optimal posterior convergence rate over $\mathcal{F}^{H}$ for CA and LA, respectively. More interestingly, DA is able to achieve the minimax rate of sparse aggregation (see Section \ref{se:minimax}), which improves the minimax rate of aggregation by utilizing the extra sparsity structure on $\lambda^\ast$.
This suggests that DA is able to automatically adapt to the unknown sparsity structure when it exists but also has optimal performance in the absence of sparsity. Such sparsity adaptive properties have also been observed in \cite{Bunea2007} for penalized optimization methods. However, in order to achieve minimax optimality, the penalty term, which depends on either the true sparsity level $s$ or a function of $\lambda^\ast$, needs to be tuned properly. In contrast, the DA does not require any prior knowledge on $\lambda^\ast$ and is tuning free.

Secondly, we also consider an ``M-open" view for CA and LA, where the truth $f_0$ can not only fall outside the list $\mathcal{M}$, but also outside the space of all convex/linear combinations of the models in $\mathcal{M}$. Under the ``M-open" view, our theory suggests that the posterior measure tends to put all its mass into a ball around the best approximation $f_{\lambda^\ast}$ of $f_0$ with a radius proportional to the minimax rate. The metric that defines that ball will be made clear later. This is practically important because the true model in reality is seldom correctly specified and a convergence to $f_{\lambda^\ast}$ is the best one can hope for. Bayesian asymptotic theory for misspecified models is under developed, with most existing results assuming that the model class is either known or is an element of a known list. One key step is to construct appropriate statistical tests discriminating $f_{\lambda^*}$ from other elements in $\mathcal{F}^{H}$. Our tests borrow some results from \cite{Kleijn2006} and rely on concentration inequalities.

The proposed prior on $\lambda$ induces a novel shrinkage structure, which is of independent interest.  There is a rich literature on theoretically optimal models based on discrete (point mass mixture) priors \citep{Ishwaran2005,Castillo2012} that are supported on a combinatorial model space, leading to heavy computational burden. However, continuous shrinkage priors avoid stochastic search variable selection algorithms \citep{George1997} to sample from the combinatorial model space and can potentially improve computational efficiency. Furthermore, our results include a rigorous investigation on $M$-dimensional symmetric Dirichlet distributions, Diri$(\rho,\ldots,\rho)$ when $M\gg 1$ and $\rho\ll1$. Here Diri$(\alpha_1,\ldots,\alpha_M)$ denotes a Dirichlet distribution with concentration parameters $\alpha_1,\ldots,\alpha_M$. In machine learning, Diri$(\rho,\ldots,\rho)$ with $\rho\ll 1$ are widely used as priors for latent class probabilities \citep{Blei2003}. However, little rigorous theory has been developed for the relationship between its concentration property and the hyperparameter $\rho$.
\citet{Rousseau2011} consider a related problem of overfitted mixture models and show that generally the posterior distribution effectively empties the extra components. However, our emphasis is to study the prediction performance instead of model selection. Moreover, in \cite{Rousseau2011} the number $M$ of components is assumed to be fixed as $n$ increases, while in our setting we allow $M$ to grow in the order of $e^{o(n)}$. In this large-$M$ situation, the general prior considered in \cite{Rousseau2011} is unable to empty the extra components and we need to impose sparsity.
In this paper, we show that if we choose $\rho\sim M^{-\gamma}$ with $\gamma>1$, then Diri$(\rho,\ldots,\rho)$ could lead to the optimal concentration rate for sparse weights (Section \ref{se:cdd}). Moreover, such concentration is shown to be adaptive to the sparsity level $s$.

The rest of the paper is organized as follows. In Section 1.1, we review the minimax results for aggregation. In Section 2, we describe the new class of priors for CA and LA based on symmetric Dirichlet distributions. In Section 3, we study the asymptotic properties of the proposed Bayesian methods. In Section 4, we show some simulations and applications. The proofs of the main theorems appear in Section 5 and some technical proofs are deferred to Section 6. We provide details of the MCMC implementation of our Bayesian aggregation methods in the appendix.

\subsection{A brief review of the Minimax risks for aggregation}\label{se:minimax}
It is known \citep{Tsybakov2003} that for CA, the minimax risk for estimating the best convex combination $f_{\lambda^\ast}$ within $\mathcal{F}^{\Lambda}$ is
\begin{align}\label{eq:CAmini}
    \sup_{f_1,\ldots,f_M\in\mathcal{F}_0}\inf_{\hat{f}}\sup_{f_{\lambda}^\ast\in\mathcal{F}^{\Lambda}}
    E||\hat{f}-f_{\lambda}^\ast||^2\asymp \left\{
                \begin{array}{cl}
                  M/n, & \text{if } M\leq\sqrt{n}, \\
                  \sqrt{\frac{1}{n}\log\big(M/\sqrt{n}+1\big)}, & \text{if } M>\sqrt{n},
                \end{array}
              \right.
\end{align}
where $\mathcal{F}_0=\big\{f:||f||_{\infty}\leq 1\}$ and $\hat{f}$ ranges over all possible estimators based on $n$ observations.
Here, for any two positive sequences $\{a_n\}$ and $\{b_n\}$, $a_n\asymp b_n$ means that there exists a constant $C>0$, such that $a_n\leq C b_n$ and $b_n\leq C a_n$ for any $n$. The norm is understood as the $L_2$-norm for random design and the $||\cdot||_n$-norm for fixed design.
If we have more information that the truth $f_{\lambda}^\ast$ also possesses a sparse structure $||\lambda^\ast||_0\triangleq \#\{j:\lambda_j>0\}=s\ll n$, then we would expect a faster convergence rate of estimating $f_{\lambda}^\ast$. For example, in the ``M-closed" case where $f_{\lambda}^\ast=f_j$ for some $j\in\{1,\ldots,M\}$, $\lambda_i^\ast=I(i=j)$ and $||\lambda^\ast||_0=1$. Let
$
\mathcal{F}^{\Lambda}_s=\big\{f=\sum_{j=1}^M\lambda_jf_{j}: \lambda\in\Lambda, ||\lambda||_0=s\big\}
$
be the space of all $s$-sparse convex aggregations of $f_1,\ldots,f_M$. By extending the results in \cite{Tsybakov2003}, it can be shown that when the sparsity level $s$ satisfies
$s\leq\sqrt{n/\log M}$, the minimax risk of estimating an element in $\mathcal{F}^{\Lambda}_s$ is given by
\begin{align}\label{eq:SCAmini}
    \sup_{f_1,\ldots,f_M\in\mathcal{F}_0}\inf_{\hat{f}}\sup_{f_{\lambda}^\ast \in\mathcal{F}^{\Lambda}_s}
    E||\hat{f}-f_{\lambda}^\ast ||^2\asymp \frac{s}{n}\log\bigg(\frac{M}{s}\bigg).
\end{align}
From the preceding results, $\sqrt{n/\log M}$ serves as the sparsity/non-spasrsity boundary of the weight $\lambda^\ast$ as there is no gain in the estimation efficiency if $s>\sqrt{n/\log M}$.

From \cite{Tsybakov2003}, the minimax risk for LA with $H=\bbR^M$ is
\begin{align*}
    \sup_{f_1,\ldots,f_M\in\mathcal{F}_0}\inf_{\hat{f}}\sup_{f_{\lambda}^\ast\in\mathcal{F}^{\bbR^M}}
    E||\hat{f}-f_{\lambda}^\ast||^2\asymp M/n.
\end{align*}
As a result, general LA is only meaningful when $M/n\to 0$, as $n\to\infty$.
Similarly, the above minimax risk can be extended to $s$-sparse LA $\mathcal{F}^{\bbR^M}_s=\big\{f=\sum_{j=1}^M\lambda_jf_{j}: \lambda\in\bbR^M, ||\lambda||_0=s\big\}$ for $s\in\{1,\ldots,M\}$ as
\begin{align*}
    \sup_{f_1,\ldots,f_M\in\mathcal{F}_0}\inf_{\hat{f}}\sup_{f_{\lambda}^\ast\in\mathcal{F}^{\bbR^M}_s}
    E||\hat{f}-f_{\lambda}^\ast||^2\asymp \frac{s}{n}\log\bigg(\frac{M}{s}\bigg).
\end{align*}
Note that for sparse LA, the sparsity level $s$ can be arbitrary. A simple explanation is that the constraint $||\lambda^\ast||_1=1$ ensures that every element in $\mathcal{F}^{\Lambda}$ can be approximated with error at most $\sqrt{\frac{1}{n}\log\big(M/\sqrt{n}+1\big)}$ by some $\sqrt{n/\log M}$-sparse element in $\mathcal{F}^{\Lambda}$ (see Lemma \ref{le:prep2}).
However, if we further assume that $||\lambda^*||\leq A$ and restrict $f^{\lambda^\ast}\in \mathcal{F}^{\Omega}$, then by extending \cite{Tsybakov2003}, it can be shown that the minimax risks of LA of $\mathcal{F}^{\bbR^M_A}$ is the same as those of convex aggregation under a non-sparse structure as \eqref{eq:CAmini} and a sparse structure as \eqref{eq:SCAmini}.

\section{Bayesian approaches for aggregation}

\subsection{Concentration properties of high dimensional symmetric Dirichlet distributions}\label{se:cdd}
Consider an $M$-dimensional symmetric Dirichlet distribution Diri$(\rho,\ldots,\rho)$ indexed by a concentration parameter $\rho>0$, whose pdf at $\lambda\in\Lambda$ is given by
$\Gamma(M\rho)\{\Gamma(\rho)\}^{-M}\prod_{j=1}^M\lambda_j^{\rho-1}$,
where $\Gamma(\cdot)$ is the Gamma function. $M$-dimensional Dirichlet distributions are commonly used in Bayesian procedures as priors over the $M-1$-simplex. For example, Dirichlet distributions can be used as priors for probability vectors for latent class allocation. In this subsection, we investigate the concentration properties of Diri$(\rho,\ldots,\rho)$ when $M\gg1$ and $\rho\ll1$. Fig.~\ref{fig:1} displays typical patterns for $3$-dimensional Dirichlet distributions Diri$(\rho,\rho,\rho)$ with $\rho$ changing from moderate to small. As can be seen, the Dirichlet distribution tends to concentrate on the boundaries for small $\rho$, which is suitable for capturing sparsity structures.

\begin{figure}[htp]
\centering
\subfigure[$\rho=1$.]{
   \includegraphics[trim=4cm 10cm 4cm 10cm, clip=true,width=1.8in]{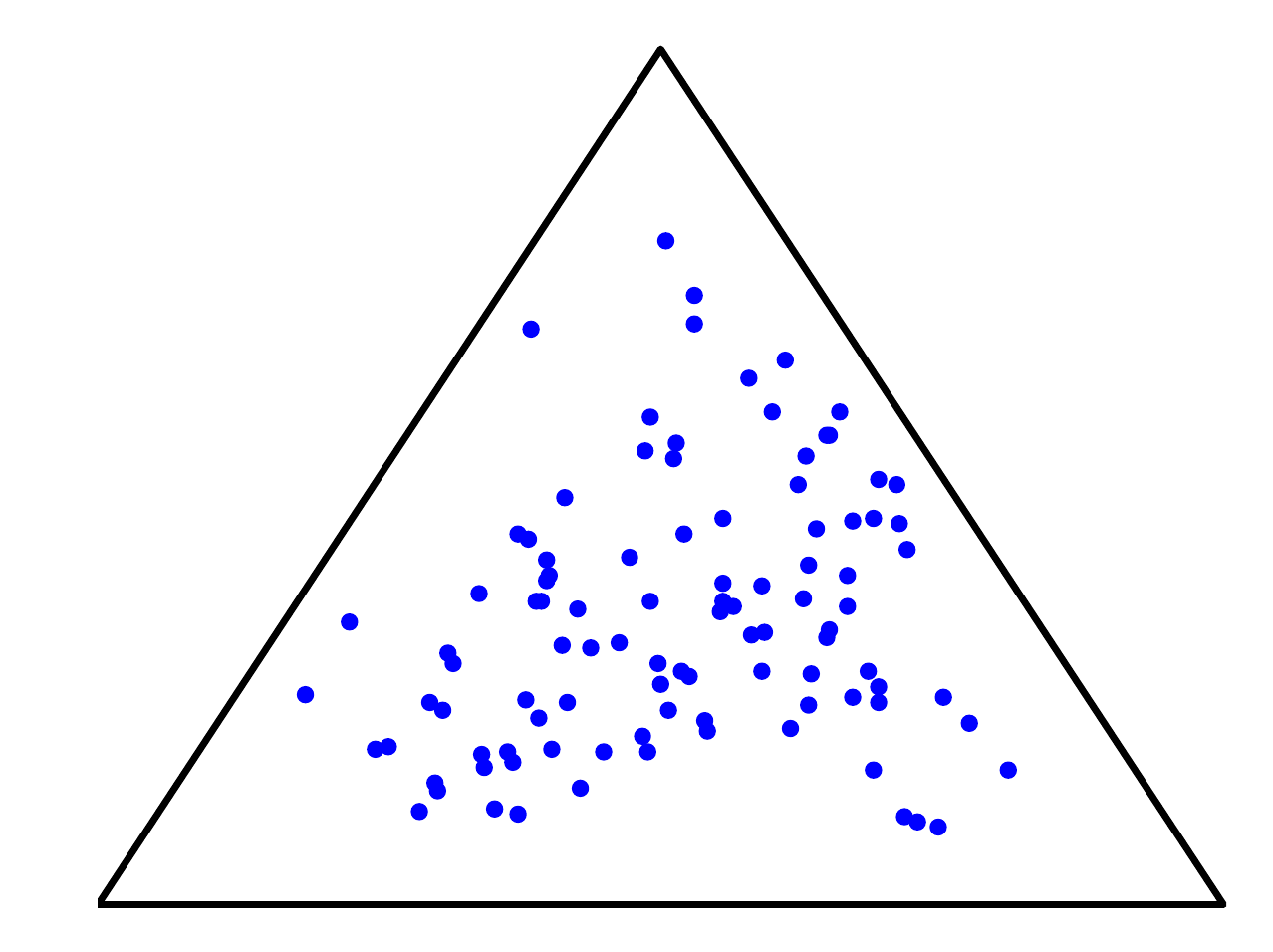}\vspace{-0.5cm}
\label{fig:f1}
 }
 \subfigure[$\rho=0.1$.]{
   \includegraphics[trim=4cm 10cm 4cm 10cm, clip=true,width=1.8in]{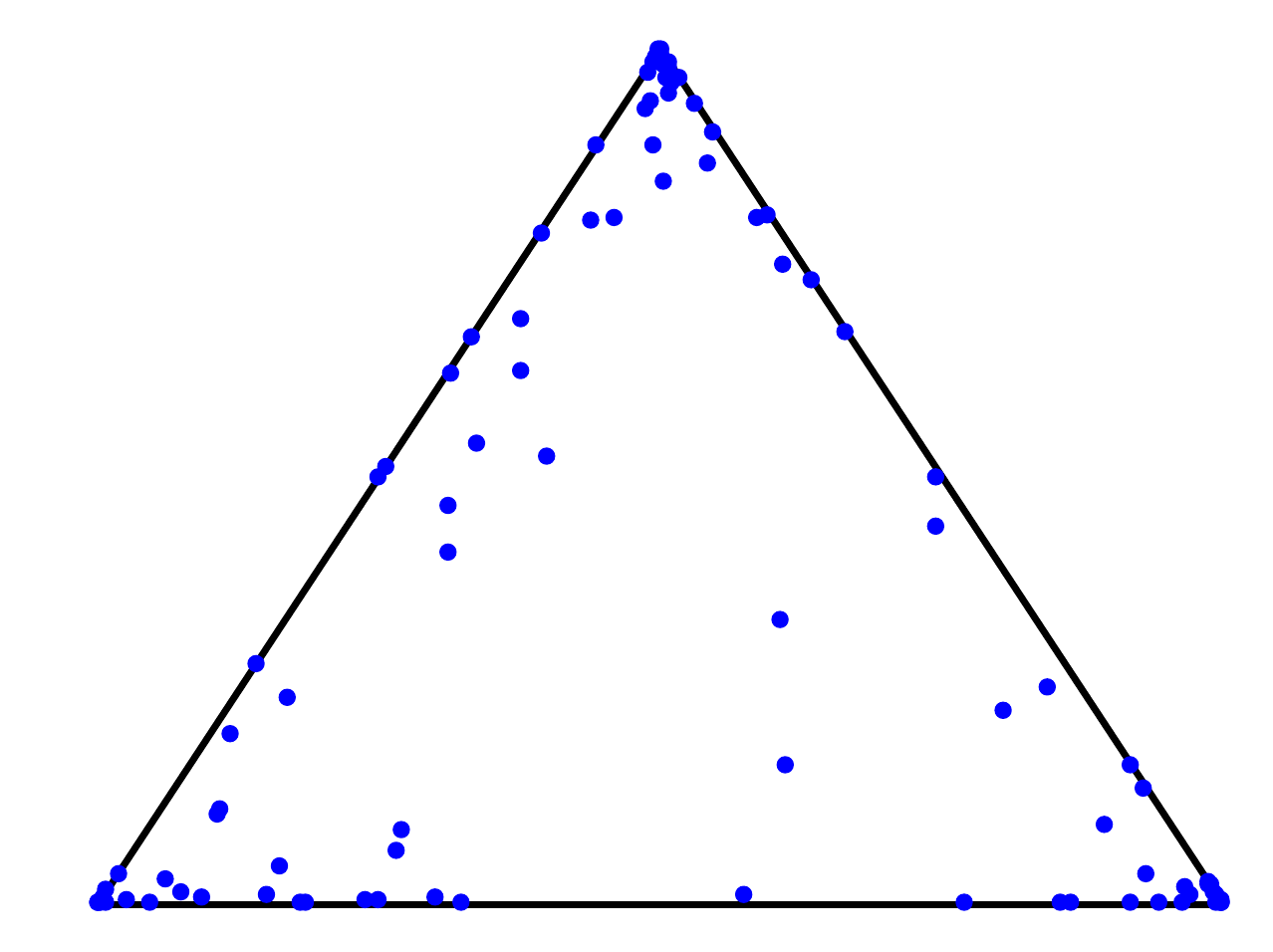}\vspace{-0.5cm}
\label{fig:f2}
 }
 \subfigure[$\rho=0.01$.]{
   \includegraphics[trim=4cm 10cm 4cm 10cm, clip=true,width=1.8in]{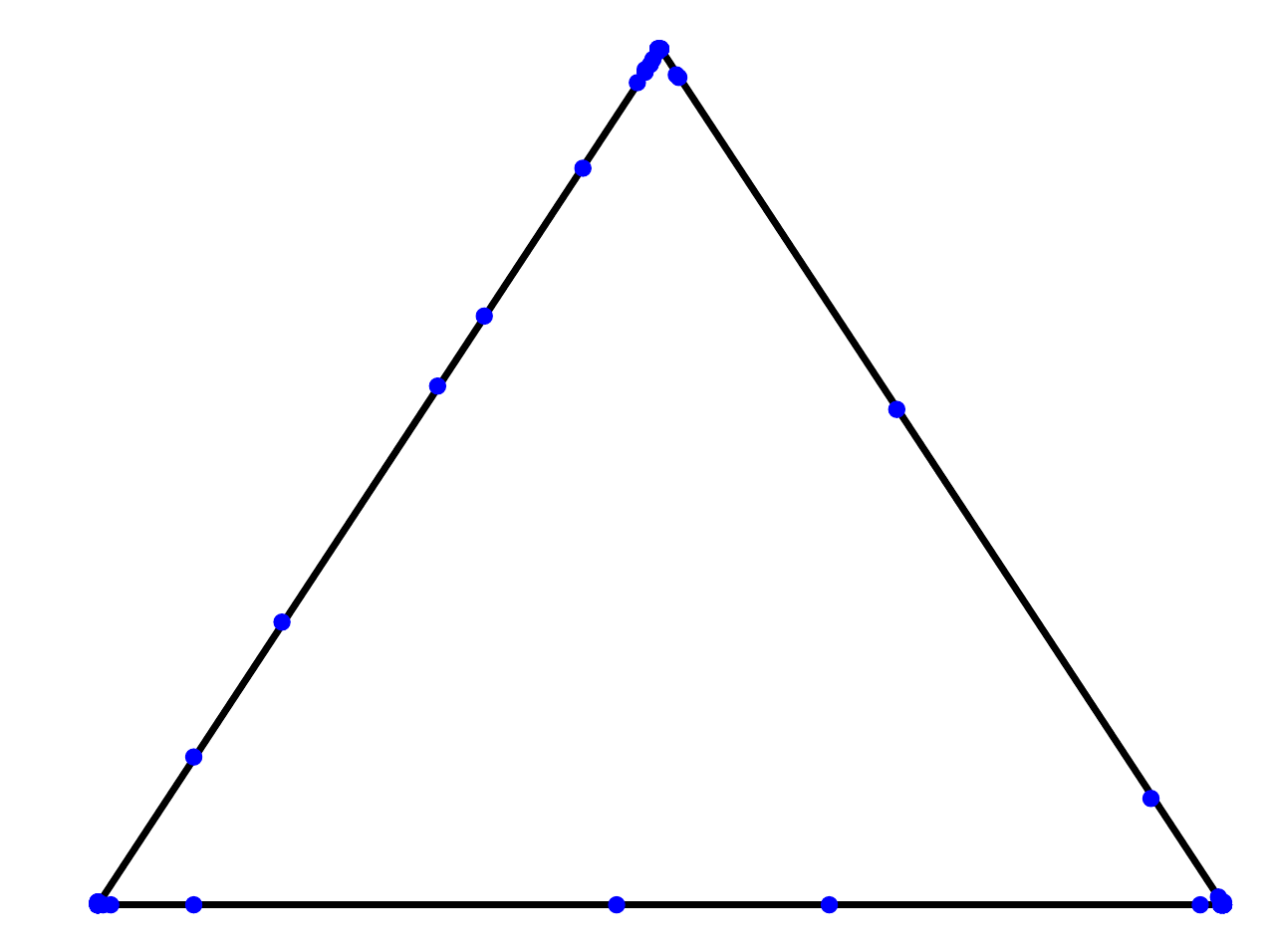}\vspace{-0.5cm}
\label{fig:f3}
 }
\label{fig:1}
\caption{Symmetric Dirichlet distributions with different values for the concentration parameter. Each plot displays 100 independent draws from Diri$(\rho,\rho,\rho)$.}
\end{figure}

To study the concentration of Diri$(\rho,\ldots,\rho)$, we need to characterize  the space of sparse weight vectors. Since Dirichlet distributions are absolutely continuous, the probability of generating an exactly $s$-sparse vector is zero for any $s<M$.
Therefore, we need to relax the definition of $s$-sparsity. Consider the following set indexed by a tolerance level $\epsilon>0$ and a sparsity level $s\in\{1,\ldots,M\}$: $\mathcal{F}^{\Lambda}_{s,\epsilon}=\{\lambda\in \Lambda: \sum_{j=s+1}^{M}\lambda_{(j)}\leq \epsilon\}$, where $\lambda_{(1)}\geq\lambda_{(2)}\geq\cdots\geq\lambda_{(M)}$ is the ordered sequence of $\lambda_1,\ldots,\lambda_M$. $\mathcal{F}^{\Lambda}_{s,\epsilon}$ consists of all vectors that can be approximated by $s$-sparse vectors with $l_1$-error at most $\epsilon$. The following theorem shows the concentration property of the symmetric Dirichlet distribution Diri$(\rho,\ldots,\rho)$ with $\rho=\alpha/M^{\gamma}$. This theorem is a easy consequence of Lemma \ref{le:conprob} and Lemma \ref{le:cpp} in Section \ref{se:proofs}.

\begin{theorem}\label{thm:dd}
Assume that $\lambda\sim$ Diri$(\rho,\ldots,\rho)$ with $\rho=\alpha/M^{\gamma}$ and $\gamma> 1$. Let $\lambda^\ast\in\Lambda_s$ be any $s$-sparse vector in the $M-1$-dimensional
simplex $\Lambda$. Then for any $\epsilon\in(0,1)$ and some $C>0$,
\begin{align}
   & P(||\lambda-\lambda^\ast||_2\leq \epsilon)\gtrsim \exp\bigg\{-C\gamma s\log\frac{M}{\epsilon} \bigg\},\label{eq:sbpCA}\\
   &P(\lambda\notin \mathcal{F}^{\Lambda}_{s,\epsilon})\lesssim \exp\bigg\{-C(\gamma-1)s\log \frac{M}{\epsilon}\bigg\}.
    \label{eq:suppCA}
\end{align}
\end{theorem}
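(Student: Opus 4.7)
The plan is to derive both (\ref{eq:sbpCA}) and (\ref{eq:suppCA}) from careful asymptotic analysis of the Dirichlet density $p_\rho(\lambda)=C(M,\rho)\prod_j\lambda_j^{\rho-1}$ with $C(M,\rho)=\Gamma(M\rho)/\Gamma(\rho)^M$. Since $\rho=\alpha M^{-\gamma}$ and $\gamma>1$, both $\rho$ and $M\rho$ tend to $0$, so Stirling yields $\Gamma(\rho)\sim 1/\rho$ and $\Gamma(M\rho)\sim 1/(M\rho)$, giving $C(M,\rho)\asymp \rho^{M-1}/M$ and $\log C(M,\rho)=-\gamma(M-1)\log M+O(M)$. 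This is the common starting point for both lemmas.

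For (\ref{eq:sbpCA}), I would take $\lambda^\ast$ supported on $\{1,\ldots,s\}$ (WLOG by permutation invariance, after treating any tiny $\lambda^\ast_j$ as zero) and pick the sub-region $R=\{\lambda:|\lambda_j-\lambda^\ast_j|\leq c\epsilon/\sqrt{s}\ \text{for}\ j\leq s,\ 0\leq\lambda_j\leq c\epsilon/M\ \text{for}\ j>s\}$. For small $c$ this sits inside the $\ell_2$-ball of radius $\epsilon$ around $\lambda^\ast$ and is compatible with the simplex constraint. A direct lower bound on $\int_R\prod_j\lambda_j^{\rho-1}\,d\lambda$ splits into an active contribution (volume $\asymp(\epsilon/\sqrt{s})^{s-1}$ after absorbing one coordinate into the constraint, with integrand $\geq 1$ since $\lambda^\ast_j\gtrsim 1/s$) and an inactive contribution $\prod_{j>s}\int_0^{c\epsilon/M}\lambda_j^{\rho-1}\,d\lambda_j=((c\epsilon/M)^\rho/\rho)^{M-s}$. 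Combining with $C(M,\rho)$ and taking logarithms, the leading terms $-\gamma(M-1)\log M$ and $+(M-s)\gamma\log M$ cancel, leaving $\log P(R)\gtrsim -\gamma s\log(M/\epsilon)+O(s\log s)$, matching (\ref{eq:sbpCA}) after absorbing lower-order factors into $C$.

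For (\ref{eq:suppCA}), I would first exploit the inclusion $\{\sum_{j=s+1}^M\lambda_{(j)}>\epsilon\}\subseteq\{N(\tau)\geq s+1\}$ where $N(\tau)=|\{j:\lambda_j>\tau\}|$ and $\tau=\epsilon/(M-s)$, since if the smallest $M-s$ coordinates summed to more than $\epsilon$ then the $(s+1)$th largest must exceed $\tau$. Markov applied to $\binom{N(\tau)}{s+1}$ together with exchangeability gives $P(N(\tau)\geq s+1)\leq\binom{M}{s+1}\,P(\lambda_1>\tau,\ldots,\lambda_{s+1}>\tau)$. The joint probability I would evaluate via the standard independence decomposition $(\lambda_1,\ldots,\lambda_{s+1})=U(v_1,\ldots,v_{s+1})$ with $U\sim\mathrm{Beta}((s+1)\rho,(M-s-1)\rho)$ and $v\sim\text{Diri}(\rho,\ldots,\rho)$ on the $s$-simplex, independent. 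Conditioning on $U$ reduces the joint tail to bounding $P(v_j>\tau/U,\forall j)$; since Diri$(\rho,\ldots,\rho)$ with $\rho\ll 1$ concentrates on the corners of the simplex, $P(\min_j v_j>a)$ is very small, and combining this with the small-$\rho$ gamma asymptotics yields the claimed bound.

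The main obstacle is in (\ref{eq:suppCA}): the naive bound $\lambda_j^{\rho-1}\leq\tau^{\rho-1}$ gives a joint probability of order $\rho^s\tau^{-(s+1)}/M$, which after multiplying by $\binom{M}{s+1}\asymp M^{s+1}$ scales like $\alpha^s M^{s(2-\gamma)+1}/\epsilon^{s+1}$ and is vacuous for $\gamma\in(1,2]$. A sharper estimate must use the simplex constraint --- forcing $s+1$ coordinates above $\tau$ depletes the remaining mass $1-\sum\lambda_j\leq 1-(s+1)\tau$, and the $(1-\sum)^{(M-s-1)\rho-1}$ factor, when integrated against the Beta law of $U$, contributes an additional $((M-s-1)\rho)^{-1}$ factor whose interaction with the small-$\rho$ gamma cancellations restores the missing $(\gamma-1)s$ in the exponent. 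Capturing this interaction cleanly --- for instance via the pointwise estimate $(a+x)^{\rho-1}\asymp\max(a,x)^{\rho-1}$, which exposes the corner-versus-bulk behavior of the Dirichlet density --- is the technical heart of Lemma \ref{le:cpp}, which I would invoke rather than re-derive in detail.
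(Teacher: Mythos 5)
Your treatment of \eqref{eq:sbpCA} is essentially the paper's own argument (Lemma \ref{le:conprob} part a): integrate the Dirichlet density over a small box around $\lambda^\ast$, lower-bound the integrand by $1$ on the active coordinates, compute $\int_0^{\delta}\lambda_j^{\rho-1}d\lambda_j=\delta^{\rho}/\rho$ on the inactive ones, and use $\Gamma(x)\sim 1/x$ as $x\to 0$ for the normalizing constant, after which the leading $\gamma M\log M$ terms cancel exactly as you describe. The details you omit there are routine, so the first half is fine.

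The genuine gap is in \eqref{eq:suppCA}, and you have in effect conceded it yourself: your threshold-counting route ($N(\tau)\ge s+1$, exchangeability, joint tail of $s+1$ coordinates) yields, after the $\binom{M}{s+1}$ factor, a bound you acknowledge is vacuous for $\gamma\in(1,2]$, and the ``sharper estimate'' that is supposed to recover the factor $(\gamma-1)s$ is only gestured at, not carried out. Closing it by ``invoking Lemma \ref{le:cpp}'' is circular: \eqref{eq:suppCA} \emph{is} the DA case of Lemma \ref{le:cpp} part a, so that lemma is precisely what must be proved. The paper's device is different and is the missing idea: identify $(\lambda_1,\ldots,\lambda_M)$ with $\big(P(A_1),\ldots,P(A_M)\big)$ for $P\sim \mathrm{DP}\big((\alpha M^{1-\gamma})U\big)$ and use the stick-breaking representation; since the union of the (at most $s$) cells containing the first $s$ atoms has mass at least $\sum_{k=1}^s w_k$, one gets the deterministic bound $\sum_{j=s+1}^M\lambda_{(j)}\leq\prod_{k=1}^s v_k$ with $v_k$ iid $\mathrm{Beta}(\alpha M^{1-\gamma},1)$, and then Markov's inequality with the $s$-th moment, $E v_k^s\leq \alpha M^{-(\gamma-1)}s^{-1}$, gives $P\big(\prod_{k=1}^s v_k\geq\epsilon\big)\lesssim \epsilon^{-s}M^{-s(\gamma-1)}s^{-s}$, i.e.\ exactly $\exp\{-Cs(\gamma-1)\log(M/\epsilon)\}$. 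Without this (or a genuinely completed version of your ``mass depletion'' estimate), your proof of \eqref{eq:suppCA} is incomplete.
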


The proof of \eqref{eq:suppCA} utilizes the stick-breaking representation of Dirichlet processes \citep{sethuraman1994} and the fact that Diri$(\rho,\ldots,\rho)$ can be viewed as the joint distribution of $\big(G([0,1/M)),$ $\ldots,$ $G([(M-1)/M,1))\big)$ where $G\sim$ Dirichlet process DP$((M\rho) U)$ with $U$ the uniform distribution on $[0,1]$. The condition $\gamma>1$ in Theorem \ref{thm:dd} reflects the fact that the concentration parameter $M\rho=\alpha M^{-(\gamma-1)}$ should decrease to $0$ as $M\to\infty$ in order for DP$((M\rho) U)$ to favor sparsity.
\eqref{eq:suppCA} validates our observations in Fig.~\ref{fig:1} and \eqref{eq:sbpCA} suggests that the prior mass around every sparse vector is uniformly large since the total number of $s$-sparse patterns (locations of nonzero components) in $\Lambda$ is of order $\exp\{Cs\log(M/s)\}$. In fact, both \eqref{eq:sbpCA} and \eqref{eq:suppCA} play crucial roles in the proofs in Section \ref{se:pcr} on characterizing the posterior convergence rate $\epsilon_n$ for the Bayesian method below for CA (also true for more general Bayesian methods), where $\{\epsilon_n\}$ is a sequence satisfying $P(||\lambda-\lambda^\ast||_2\leq\epsilon_n)\gtrsim \exp(-n\epsilon_n^2)$  and $P(\lambda\notin \mathcal{F}^{\Lambda}_{s,\epsilon})\lesssim \exp(-n\epsilon_n^2)$. Assume the best approximation $f_{\lambda^\ast}$ of the truth $f_0$ to be $s$-sparse. \eqref{eq:suppCA} implies that the posterior distribution of $\lambda$ tends to put almost all its mass in $\mathcal{F}^{\Lambda}_{s,\epsilon}$ and \eqref{eq:sbpCA} is required for the posterior distribution to be able to concentrate around $\lambda^\ast$ at the desired minimax rate given by \eqref{eq:CAmini}.

\subsection{Using Dirichlet priors for Convex Aggregation}\label{se:BA}
In this subsection, we assume $X_i$ to be random with distribution $Q$ and $f_0\in L_2(Q)$. Here, for a probability measure $Q$ on a space $\mathcal{X}$, we use the notation $||\cdot||_Q$ to denote the norm associated with the square integrable function space $L_2(Q)=\{f:\int_{\mathcal{X}}|f(x)|^2dQ(x)\leq\infty\}$. We assume the random design for theoretical convenience and the procedure and theory for CA can also be generalized to fixed design problems. Assume the $M$ functions $f_1,\ldots,f_M$ also belong to $L_2(Q)$.
Consider combining these $M$ functions into an aggregated estimator $\hat{f}=\sum_{j=1}^M\hat{\lambda}_jf_{j}$, which tries to estimate $f_0$ by elements in the space
$\mathcal{F}^{\Lambda}=\big\{f=\sum_{j=1}^M\lambda_jf_{j}: \lambda_j\geq0, \sum_{j=1}^M\lambda_j= 1\big\}$
of all convex combinations of $f_1,\ldots,f_M$. The assumption that $f_1,\ldots,f_M$ are fixed is reasonable as long as different subsets of samples are used for producing $f_1,\ldots,f_M$ and for aggregation. For example, we can divide the data into two parts and use the first part for estimating $f_1,\ldots,f_M$ and the second part for aggregation.

We propose the following Dirichlet aggregation (DA) prior:
\begin{align*}
  &\text{(DA)}& f=\sum_{j=1}^M\lambda_jf_j,\ (\lambda_1,\ldots,\lambda_M)\sim \text{Diri}\bigg(\frac{\alpha}{M^{\gamma}},\ldots,\frac{\alpha}{M^{\gamma}}\bigg),&&
\end{align*}
where $(\gamma,\alpha)$ are two positive hyperparameters.
As Theorem \ref{thm:dd} and the results in Section \ref{se:proofs} suggest, such a symmetric Dirichlet distribution is favorable since Diri$(\alpha_1,\ldots,\alpha_M)$ with equally small parameters $\alpha_1=\ldots=\alpha_M=\alpha/M^{\gamma}$ for $\gamma>1$ has nice concentration properties under both sparse and nonsparse $L_1$ type conditions, leading to near minimax optimal posterior contraction rate under both scenarios.

We also mention a related paper \citep{Anirban2013} that uses Dirichlet distributions in high dimensional shrinkage priors, where they considered normal mean estimating problems. They proposed a new class of Dirichlet Laplace priors for sparse problems, with the Dirichlet placed on scaling parameters of Laplace priors for the normal means.  Our prior is fundamentally different in using the Dirichlet directly for the weights $\lambda$, including a power $\gamma$ for $M$.  This is natural for aggregation problems, and we show that the proposed prior is simultaneously minimax optimal under both sparse and nonsparse conditions on the weight vector $\lambda$ as long as $\gamma>1$.

\subsection{Using Dirichlet priors for Linear Aggregation}\label{se:lreg}
For LA, we consider a fixed design for $X_i\in\bbR^d$ and write \eqref{eq:truth} into vector form as $Y=F_0+\epsilon$, $\epsilon\sim N(0,\sigma^2I_n)$, where $Y=(Y_1,\ldots,y_n)$ is the $n\times1$ response vector,  $F_0=(f_0(X_1),\ldots,f_0(X_n))^T$ is the $n\times1$ vector representing the expectation of $Y$ and $I_n$ is the $n\times n$ identity matrix.
Let $F=(F_{ij})=(f_j(X_x))$ be the $n\times M$ prediction matrix, where the $j$th column of $F$ consists of all values of $f_j$ evaluated at the training predictors $X_1,\ldots,X_n$. LA estimates $F_0$ as $F\lambda$ with $\lambda=(\lambda_1,\ldots,\lambda_M)^T\in\bbR^M$ the $p\times 1$ the coefficient vector. Use the notation $F_j$ to denote the $j$th column of $F$ and $F^{(i)}$ the $i$th row. Notice that this framework of linear aggregation includes (high-dimensional) linear models as a special case where $d=M$ and $f_j(X_i) = X_{ij}$.

Let $A=||\lambda||_1=\sum_{j=1}^M|\lambda_j|$, $\mu=(\mu_1,\ldots,\mu_M)\in \Lambda$ with $\mu_j=|\lambda_j|/A$, $z=(z_1,\ldots,z_M)\in\{-1,1\}^M$ with $z_j=\text{sgn}(\lambda_j)$. This new parametrization is identifiable and $(A,\mu,z)$ uniquely determines $\lambda$. Therefore, there exists a one-to-one correspondence between the prior on $(A,\mu,z)$ and the prior on $\lambda$.
Under this parametrization, the geometric properties of $\lambda$ transfer to those of $\mu$. For example, a prior on $\mu$ that induces sparsity will produce a sparse prior for $\lambda$. With this in mind, we propose the following double Dirichlet Gamma (DDG) prior for $\lambda$ or $(A, \mu, z)$:
\begin{align*}
     & \text{(DDG1)} & A\sim \text{Ga}(a_0, b_0), \ \mu\sim \text{Diri}\bigg(\frac{\alpha}{M^{\gamma}},\ldots,\frac{\alpha}{M^{\gamma}}\bigg),\
    \ z_1,\ldots,z_M\text{ iid with }P(z_i=1)=\frac{1}{2}. &&
\end{align*}
Since $\mu$ follows a Dirichlet distribution, it can be equivalently represented as
\begin{align*}
    \bigg(\frac{T_1}{\sum_{j=1}^pT_j},\ldots,\frac{T_M}{\sum_{j=1}^pT_j}\bigg),\ \text{with }T_j\overset{\text{iid}}{\sim}\text{Ga}\bigg(\frac{\alpha}{M^{\gamma}},1\bigg).
\end{align*}
Let $\eta=(\eta_1,\ldots,\eta_M)$ with $\eta_j=z_j\lambda_j$.
By marginalizing out the $z$, the prior for $\mu$ can be equivalently represented as
\begin{align}
    \bigg(\frac{T_1}{\sum_{j=1}^M|T_j|},\ldots,\frac{T_M}{\sum_{j=1}^M|T_j|}\bigg),\ \text{with }T_j\overset{\text{iid}}{\sim}\text{DG}\bigg(\frac{\alpha}{M^{\gamma}},1\bigg).\label{eq:dD}
\end{align}
where DG$(a,b)$ denotes the double Gamma distribution with shape parameter $a$, rate parameter $b$ and pdf $\{2\Gamma(a)\}^{-1}b^a|t|^{a-1}e^{-b|t|}$ ($t\in \bbR$), where $\Gamma(\cdot)$ is the Gamma function. More generally, we call a distribution as the double Dirichlet distribution with parameter $(a_1,\ldots,a_M)$, denoted by DD$(a_1,\ldots,a_M)$, if it can be represented by \eqref{eq:dD} with $T_j\sim $DG$(a_j,1)$. Then, the DDG prior for $\lambda$ has an alternative form as
\begin{align*}
  &\text{(DDG2)} &  \lambda=A\eta,\ A\sim\text{Ga}(a_0,b_0),\ \eta\sim \text{DD}\bigg(\frac{\alpha}{M^{\gamma}},\ldots,\frac{\alpha}{M^{\gamma}}\bigg).&&
\end{align*}
We will use the form (DDG2) for studying the theoretical properties of the DDG prior and focus on the form (DDG1) for posterior computation.

\section{Theoretical properties}
In this section, we study the prediction efficiency of the proposed Bayesian aggregation procedures for CA and LA in terms of convergence rate of posterior prediction.

We say that a Bayesian model $\mathcal{F}=\{P_{\theta}: \theta\in\Theta\}$, with a prior distribution $\Pi$ over the parameter space $\Theta$, has a posterior convergence rate at least $\epsilon_n$ if
\begin{align}\label{eq:dpcr}
    \Pi\big(d(\theta,\theta^\ast )\geq D\epsilon_n\big|X_1,\ldots,X_n\big)\overset{P_{\theta_0}}{\longrightarrow}0,
\end{align}
with a limit $\theta^\ast \in\Theta$, where $d$ is a metric on $\Theta$ and $D$ is a sufficiently large positive constant. For example, to characterize prediction accuracy, we use $d(\lambda,\lambda')=||f_{\lambda}-f_{\lambda'}||_Q$ and $||n^{-1/2}F(\lambda-\lambda')||_2$ for CA and LA, respectively. Let $P_0=P_{\theta_0}$ be the truth under which the iid observations $X_1,\ldots,X_n$ are generated. If $\theta_0\in\Theta$, then the model is well-specified and under mild conditions, $\theta^\ast =\theta_0$. If $\theta_0\notin\Theta$, then the limit $\theta^\ast $ is usually the point in $\Theta$ so that $P_{\theta}$ has the minimal Kullback-Leibler (KL) divergence to $P_{\theta_0}$.
\eqref{eq:dpcr} suggests that the posterior probability measure puts almost all its mass over a sequence of $d$-balls whose radii shrink towards $\theta^\ast $ at a rate $\epsilon_n$. In the following, we make the assumption that $\sigma$ is known, which is a standard assumption adopted in Bayesian asymptotic proofs to avoid long and tedious arguments. \cite{Jonge2013} studies the asymptotic behavior of the error standard deviation in regression when a prior is specified for $\sigma$. Their proofs can also be used to justify our setup when $\sigma$ is unknown. In the rest of the paper, we will frequently use $C$ to denote a constant, whose meaning might change from line to line.

\subsection{Posterior convergence rate of Bayesian convex aggregation}\label{se:pcrba}
Let $\Sigma=(E_Q[f_i(X)f_j(X)])_{M\times M}$ be the second order moment matrix of $(f_1(X),\ldots,f_M(X))$, where $X\sim Q$. Let $f^\ast =\sum_{j=1}^M\lambda_j^\ast f_{j}$  be the best $L_2(Q)$-approximation of $f_0$ in the space
$\mathcal{F}^{\Lambda}=\big\{f=\sum_{j=1}^M\lambda_jf_{j}: \lambda_j\geq0, \sum_{j=1}^M\lambda_j= 1\big\}$
of all convex combinations of $f_1,\ldots,f_M$, i.e. $\lambda^\ast=\text{arg}\min_{\lambda\in\Lambda}||f_{\lambda}-f_0||_Q^2$. This misspecified framework also includes the well-specified situation as a special case where $f_0=f^\ast\in\mathcal{F}^{\Lambda}$. Denote the $j$th column of $\Sigma$ by $\Sigma_j$.

We make the following assumptions:
\begin{description}
  \item[(A1)] There exists a constant $0<\kappa<\infty$ such that $\sup_{1\leq j\leq M}|\Sigma_{jj}|\leq\kappa$.
  \item[(A2)] (Sparsity) There exists an integer $s>0$, such that $||\lambda^\ast ||_0=s<n$.
  \item[(A3)] There exists a constant $0<\kappa<\infty$ such that $\sup_{1\leq j\leq M}\sup_{x\in\mathcal{X}}|f_j(x)|\leq\kappa$.
\end{description}

\begin{itemize}
  \item If $E_Q[f_j(X)]=0$ for each $j$, then $\Sigma$ is the variance covariance matrix. (A1) assumes the second moment $\Sigma_{jj}$ of $f_j(X)$ to be uniformly bounded. By applying Cauchy's inequality, the off-diagonal elements of $\Sigma$ can also be uniformly bounded by the same $\kappa$.
  \item (A3) implies (A1). This uniformly bounded condition is only used in Lemma \ref{le:test} part a. As illustrated by \cite{Birge2004}, such a condition is necessary for studying the $L_2(Q)$ loss of Gaussian regression with random design, since under this condition the Hellinger distance between two Gaussian regression models is equivalent to the $L_2(Q)$ distance between their mean functions.
  \item Since $\lambda^\ast \in \Lambda$, the $l_1$ norm of $\lambda^\ast $ is always equal to one, which means that $\lambda^\ast $ is always $l_1$-summable. $(A2)$ imposes an additional sparse structure on $\lambda^\ast $. We will study separately the convergence rates with and without (A2). It turns out that the additional sparse structure improves the rate if and only if $s\ll \sqrt{\frac{n}{\log M}}$.
\end{itemize}

The following theorem suggests that the posterior of $f_{\lambda}$ concentrates on an $||\cdot||_Q$-ball around the best approximation $f^\ast$ with a radius proportional to the minimax rate of CA. In the special case when $f_\ast=f_0$, the theorem suggests that the proposed Bayesian procedure is minimax optimal.

\begin{theorem}\label{thm:BA}
Assume (A3). Let $(X_1,Y_1),\ldots,(X_n,Y_n)$ be $n$ iid copies of $(X,Y)$ sampled from $X\sim Q$, $Y|X\sim N(f_0(X),\sigma^2)$. If $f^\ast =\sum_{j=1}^M\lambda_j^\ast f_j$ is the minimizer of $f\mapsto||f-f_0||_Q$ on $\mathcal{F}^{\Lambda}$, then under the prior (DA), for some $D>0$, as $n\to\infty$,
\begin{align*}
    E_{0,Q} \Pi\bigg(||f-f^\ast ||_Q\geq D\min\bigg\{\sqrt{\frac{M}{n}},\sqrt[4]{\frac{\log(M/\sqrt{n}+1)}{n}}\bigg\}\bigg|X_1,Y_1,\ldots,X_n,Y_n\bigg)\to 0.
\end{align*}
Moreover, if (A2) is also satisfied, then as $n\to\infty$,
\begin{align*}
    E_{0,Q} \Pi\bigg(||f-f^\ast ||_Q\geq D\sqrt{\frac{s\log(M/s)}{n}}\ \bigg|X_1,Y_1,\ldots,X_n,Y_n\bigg)\to 0.
\end{align*}
\end{theorem}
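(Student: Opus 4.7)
The plan is to embed the argument into the posterior contraction framework for misspecified models developed by \cite{Kleijn2006}. First I would verify that for Gaussian regression with known $\sigma$, the identity $P_0[\log(p_{f^*}/p_{f_\lambda})] = \|f_\lambda - f^*\|_Q^2/(2\sigma^2)$ and a comparable bound on the quadratic variation hold, using that $f^*$ is the $L_2(Q)$-projection of $f_0$ onto the convex set $\mathcal{F}^\Lambda$ so that $\langle f_0 - f^*, f_\lambda - f^*\rangle_Q \geq 0$ for every $\lambda \in \Lambda$. This collapses the Kleijn KL--variation neighborhoods into $\|\cdot\|_Q$-balls around $f^*$, up to absolute constants, and reduces the problem to the three ingredients at rate $\epsilon_n$: (i) a prior-mass lower bound $\Pi\{\|f_\lambda-f^*\|_Q \leq \epsilon_n\} \gtrsim e^{-c_1 n\epsilon_n^2}$; (ii) a sieve $\Theta_n \subset \Lambda$ with $\Pi(\Lambda\setminus\Theta_n) \leq e^{-c_2 n\epsilon_n^2}$ for $c_2$ sufficiently large and with $\|\cdot\|_Q$-covering entropy $\lesssim n\epsilon_n^2$; and (iii) exponentially consistent tests separating $f^*$ from $\{f_\lambda \in \Theta_n : \|f_\lambda-f^*\|_Q \geq D\epsilon_n\}$ under $P_0$.

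For the sparse statement I would set $\epsilon_n^2 \asymp s\log(M/s)/n$. Ingredient (i) follows by combining the Minkowski bound $\|f_\lambda-f^*\|_Q \leq \kappa\|\lambda-\lambda^*\|_1$ coming from (A3) with a translation to an $\ell_2$ small ball and the Dirichlet lower bound (2.2) of Theorem~\ref{thm:dd}. For (ii) I would take $\Theta_n = \mathcal{F}^\Lambda_{s,\epsilon_n}$: the complementary prior mass is controlled by (2.3) of Theorem~\ref{thm:dd}, which for $\gamma > 1$ large enough dominates $c_2 n\epsilon_n^2$, and the $\|\cdot\|_Q$-covering entropy of $\Theta_n$ decomposes into $\binom{M}{s} \leq (eM/s)^s$ support patterns times an $s$-dimensional simplex $\epsilon_n$-covered by $(C/\epsilon_n)^s$ points, giving log-entropy of order $s\log(M/s)+s\log(1/\epsilon_n) \lesssim n\epsilon_n^2$.

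For (iii) I would follow \cite{Kleijn2006}: for each $f_{\lambda'}$ at $\|\cdot\|_Q$-distance at least $D\epsilon_n$ from $f^*$, construct a likelihood-ratio-type test between $p_{f^*}$ and a local perturbation, then extend it by a covering argument. Under $P_0$ the test statistic is a shifted sub-Gaussian variable whose sign is fixed by the convex-projection inequality, and (A3) together with Gaussian concentration delivers exponential type I and II errors; a covering-number union bound over $\Theta_n$ produces the uniform test. For the first, non-sparse, half of Theorem~\ref{thm:BA} I would replay this machinery after reducing to a sparse problem via the Maurey-style Lemma~\ref{le:prep2}: any $\lambda^*\in\Lambda$ is approximable by a $k$-sparse element with $\|\cdot\|_Q$-error of order $\kappa/\sqrt{k}$, so taking $k = \min\{M,\sqrt{n/\log M}\}$ makes both the approximation error and the sparse-estimation rate fall at $\min\{\sqrt{M/n},(\log(M/\sqrt{n}+1)/n)^{1/4}\}$, and absorbing the approximation bias into the contraction radius yields the first display.

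The hardest step will be the misspecified testing. The log-likelihood ratio $\log(p_{f_{\lambda'}}/p_{f^*})$ lacks the clean Hellinger structure of the well-specified case: its mean under $P_0$ mixes the desired $\|f_{\lambda'}-f^*\|_Q^2/(2\sigma^2)$ term with a cross-term involving $\langle f_0-f^*, f_{\lambda'}-f^*\rangle_Q$. The convex-projection inequality keeps the cross-term from flipping the sign, and Gaussian concentration of $Y-f^*(X)$ (which is \emph{not} mean zero under $P_0$, having mean $f_0-f^*$) provides the tail bounds; pushing this uniformly through the sieve $\Theta_n$, whose metric entropy already saturates the budget $n\epsilon_n^2$, is the delicate technical work.
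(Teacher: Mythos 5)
Your overall architecture coincides with the paper's own proof: the same three Ghosal--Ghosh--van der Vaart-type ingredients, with prior mass supplied by the Dirichlet concentration bound \eqref{eq:sbpCA} (Lemma \ref{le:conprob}), the sieve $\mathcal{F}^{\Lambda}_{s,\epsilon_n}$ with complement mass from \eqref{eq:suppCA} (Lemma \ref{le:cpp}) and entropy from Lemma \ref{le:coen}, misspecified tests taken from \cite{Kleijn2006} as in Lemma \ref{le:test} part a, and the Maurey-type sparsification of Lemma \ref{le:prep2} to get the non-sparse rate. So the route is essentially the paper's, but one step does not work as you state it. The $L_2(Q)$-projection of $f_0$ onto the convex set $\mathcal{F}^{\Lambda}$ satisfies the variational inequality $\langle f_0-f^\ast, f_\lambda-f^\ast\rangle_Q\le 0$ for all $\lambda\in\Lambda$ (you wrote $\ge 0$), and consequently $P_0\log(p_{f^\ast}/p_{f_\lambda})=\tfrac{1}{2\sigma^2}\bigl(\|f_\lambda-f^\ast\|_Q^2-2\langle f_0-f^\ast,f_\lambda-f^\ast\rangle_Q\bigr)\ \ge\ \tfrac{1}{2\sigma^2}\|f_\lambda-f^\ast\|_Q^2$, an inequality rather than the identity you claim. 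This inequality points the right way for the tests (the type I error only improves), but the wrong way for the prior-mass ingredient: there you need the Kullback--Leibler/variation neighborhood of $f^\ast$ to \emph{contain} a $\|\cdot\|_Q$-ball, i.e.\ you need the cross term to be $O(\epsilon_n^2)$ on $\{\|f_\lambda-f^\ast\|_Q\le\epsilon_n\}$, whereas in general it is of first order, up to $\|f_0-f^\ast\|_Q\,\|f_\lambda-f^\ast\|_Q\asymp\epsilon_n$ (for instance when the projection lands on a face of the convex hull). The paper does not derive this from the minimizer property alone: Lemma \ref{le:test} part a explicitly assumes the orthogonality $E_Q[(f-f^\ast)(f^\ast-f_0)]=0$ for every $f\in\mathcal{F}^{\Lambda}$, and the computation $K=V=\tfrac{1}{2\sigma^2}d_{\Sigma}^2(\lambda,\lambda^\ast)$ in Section \ref{se:pcr} uses the same condition; your argument needs to invoke this condition (or otherwise control the cross term) rather than the one-sided projection inequality.

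A second, minor point: you cannot secure the sieve-complement bound by taking ``$\gamma>1$ large enough,'' since $\gamma$ is a fixed hyperparameter of the prior for which the theorem is claimed. The paper's fix is to enlarge the sieve to $\mathcal{F}^{\Lambda}_{as,\epsilon_n}$ and increase the constant $a$ so that the exponent delivered by Lemma \ref{le:cpp} dominates the constant appearing in the evidence lower bound; that device is available to you verbatim and leaves the entropy bound of the same order.
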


\subsection{Posterior convergence rate of Bayesian linear aggregation}
Let $\lambda^\ast =(\lambda_1^\ast ,\ldots,\lambda_M^\ast )$ be the coefficient such that $F\lambda^\ast$ best approximates $F_0$ in $||\cdot||_2$ norm, i.e. $\lambda^\ast=\text{arg}\min_{\lambda\in\bbR^M}||F\lambda-F_0||_2^2$. Similar to the CA case, such a misspecified framework also includes the well-specified situation as a special case where $F_0=F\lambda^\ast\in\mathcal{F}^{\bbR^M}$. It is possible that there exists more than one such a minimizer and then we can choose $\lambda^\ast$ with minimal nonzero components. This non-uniqueness will not affect our theorem quantifying the prediction performance of LA since any minimizers of $||F\lambda-F_0||_2^2$ will give the same prediction $F\lambda$. Our choice of $\lambda^\ast$, which minimizes $||\lambda^\ast||_0$, can lead to the fastest posterior convergence rate.

We make the following assumptions:
\begin{description}
  \item[(B1)] There exists a constant $0<\kappa<\infty$ such that $\frac{1}{\sqrt{n}}\sup_{1\leq j\leq M}||F_j||_2\leq\kappa$.
  \item[(B2a)] (Sparsity) There exists an integer $s>0$, such that $||\lambda^\ast ||_0=s<n$.
  \item[(B2b)] ($l_1$-summability) There exists a constant $A_0>0$, such that $A^\ast =||\lambda^\ast ||_1<A_0$.
  \item[(B3)] For $m_0=\lceil\sqrt{n}\, \rceil$, there exists a constant $\kappa_0>0$ such that $\frac{1}{\sqrt{n}}||F\lambda||_2\geq \kappa_0||\lambda||_1$ for all $\lambda\in \bbR^M$ with  $||\lambda||_0= m_0$.
\end{description}

\begin{itemize}
  \item (B1) is the column normalizing condition for the design matrix. This assumption is mild since the predictors can always be normalized to satisfy it. This condition can also be considered as the empirical version of (A1), where the matrix $\Sigma$ is replaced by its empirical estimator $\frac{1}{n}F^TF$.
  \item (B2a) is a counterpart of the sparsity condition (A2) of the aggregation problem. This assumption is commonly made in the high dimensional linear regression literature. (B2b) is assumed by \citet{Peter2006} in studying consistency of boosting for high dimensional linear regression. This condition includes the sparsity condition (B2a) as a special case while also including the case in which many components of $\lambda^\ast $ are nonzero but small in magnitude. Similar to the aggregation problem, under (B2b), the sparsity gains only when $s\ll \sqrt{\frac{n}{\log M}}$. (B2a) also implies a sparsity constraint on $\eta^\ast =\lambda^\ast /A^\ast $, where $\eta^\ast $ always satisfies $||\eta^\ast ||_1=1$.
  \item (B3) is the same in spirit as the sparse eigenvalue condition made in \cite{Raskutti2011}, which provides identifiability for $m_0$-sparse vectors. This assumption is only made for the $l_1$-summable case, where any $l_1$-summable $\lambda\in\bbR^M$ can be approximated by an $m_0$-sparse vector with error at most $O(||\lambda_1||\epsilon_n)$ under $d_F$ (Lemma \ref{le:coen} part b), with $\epsilon_n$ given in (DA-PC), where $d_F(\lambda,\lambda')=||n^{-1/2}F(\lambda-\lambda')||_2$. Under this assumption, we show that the posterior probability of $\{||\lambda||_1\leq KA^\ast \}$ converges to zero as $n\to\infty$ for some constant $K$ and therefore with high posterior probability, $\lambda$ can be approximated by an $m_0$-sparse vector with error at most $O(\epsilon_n)$.
\end{itemize}

The following theorem is a counterpart of Theorem \ref{thm:BA} for LA.

\begin{theorem}\label{thm:LR}
Assume (B1). Let $Y$ be an $n$-dimensional response vector sampled from $Y\sim N(F_0,\sigma^2I_n)$. Let $\lambda^\ast$ be any one of the minimizers of $\lambda\mapsto||F\lambda-F_0||_2$ in $\bbR^M$. If (B2b) and (B3) are true, then under the prior (DDG2), for some $D>0$, as $n\to\infty$,
\begin{align*}
    E_0 \Pi\bigg(||n^{-\frac{1}{2}}F(\lambda-\lambda^\ast)||_2\geq D\min\bigg\{\sqrt{\frac{M}{n}},\sqrt[4]{\frac{\log(M/\sqrt{n}+1)}{n}}\bigg\}\bigg|\ Y\bigg)\to 0.
\end{align*}
If (B2a) is true, then as $n\to\infty$,
\begin{align*}
    E_0 \Pi\bigg(||n^{-\frac{1}{2}}F(\lambda-\lambda^\ast)||_2\geq D\sqrt{\frac{s\log(M/s)}{n}}\ \bigg|\ Y\bigg)\to 0.
\end{align*}
\end{theorem}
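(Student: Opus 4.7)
The plan is to invoke the Ghosal--Ghosh--van der Vaart posterior contraction machinery in the misspecified-Gaussian form of Kleijn--van der Vaart, combined with the Dirichlet concentration bounds of Theorem \ref{thm:dd}. Because $\sigma$ is known and the design is fixed, the log-likelihood ratio between $P_{\lambda}$ and $P_{\lambda^\ast}$ is exactly quadratic in $d_F(\lambda,\lambda^\ast)=n^{-1/2}\|F(\lambda-\lambda^\ast)\|_2$; consequently KL neighbourhoods of $P_{\lambda^\ast}$, prior mass, sieves and tests can all be controlled through the single metric $d_F$. The three ingredients to verify are (i) a prior-mass lower bound $\Pi\{d_F(\lambda,\lambda^\ast)\leq \epsilon_n\}\gtrsim e^{-Cn\epsilon_n^2}$, (ii) a sieve $\Theta_n$ with $\Pi(\Theta_n^c)\lesssim e^{-(C+4)n\epsilon_n^2}$ and $d_F$-log-covering number $O(n\epsilon_n^2)$, and (iii) exponential tests separating $P_{\lambda^\ast}$ from the shells $\{\lambda\in\Theta_n: d_F(\lambda,\lambda^\ast)\geq D\epsilon_n\}$.

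For step (i) I would use the (DDG2) representation $\lambda=A\eta$ with $A\sim\text{Ga}(a_0,b_0)$ and $\eta\sim\text{DD}(\alpha/M^{\gamma},\ldots,\alpha/M^{\gamma})$. Writing $\lambda^\ast=A^\ast\eta^\ast$ with $\|\eta^\ast\|_1=1$ and applying the triangle inequality together with (B1) yields $d_F(\lambda,\lambda^\ast)\leq \kappa A\|\eta-\eta^\ast\|_1+\kappa|A-A^\ast|$. The gamma prior on $A$ contributes a trivial factor of order $\epsilon_n$ for the event $|A-A^\ast|\leq \epsilon_n$. For the Dirichlet term, since $\eta$ is a signed Dirichlet vector, I would first condition on the correct sign pattern of $\eta$ (probability $2^{-s}$, absorbed into the exponent) and then transfer bound \eqref{eq:sbpCA} of Theorem \ref{thm:dd} to obtain prior mass at least $\exp\{-C\gamma s\log(M/\epsilon_n)\}$ around $\eta^\ast$. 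Solving gives $\epsilon_n^2\asymp s\log(M/s)/n$ under (B2a).

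For steps (ii)--(iii) I would choose the sieve $\Theta_n=\{\lambda=A\eta:A\leq R_n,\ \eta\in \mathcal{F}^{\Lambda}_{s_n,\tau_n}\}$ with $s_n$ slightly above $s$ under (B2a), above $\lceil\sqrt{n/\log M}\,\rceil$ under (B2b), $R_n$ polynomial in $n$, and $\tau_n$ of order $\epsilon_n$. The gamma tail gives $P(A>R_n)\lesssim e^{-b_0R_n}$ while bound \eqref{eq:suppCA} of Theorem \ref{thm:dd} controls the Dirichlet complement. A $d_F$-covering of $\Theta_n$ is constructed by combining a grid on $A$, the $\binom{M}{s_n}2^{s_n}$ support/sign patterns, and an $l_1$-cover of the simplex, producing entropy $O(s_n\log(M/s_n))$, matching the target rate. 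Tests are then built directly from the quadratic log-likelihood ratio: under $P_0$ the cross term $(Y-F\lambda^\ast)^T F(\lambda-\lambda^\ast)$ is mean-zero sub-Gaussian of scale $\sigma\|F(\lambda-\lambda^\ast)\|_2$, so a likelihood-ratio test has error probabilities of order $\exp(-cnd_F^2(\lambda,\lambda^\ast))$, exactly as in the misspecified Gaussian test lemma used in Section \ref{se:proofs}.

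The $l_1$-summable case (B2b) needs one extra ingredient: assumption (B3), via the approximation lemma \ref{le:coen}, guarantees that every $l_1$-summable $\lambda$ is $d_F$-approximated within $O(\|\lambda\|_1\epsilon_n)$ by a $\lceil\sqrt{n}\,\rceil$-sparse vector, and it simultaneously provides the lower bound on $\|F\lambda\|_2/\sqrt n$ that forces the posterior to concentrate on $\{\|\lambda\|_1\leq KA^\ast\}$. Combining these reduces (B2b) to the sparse analysis above with effective sparsity $\lceil\sqrt{n/\log M}\,\rceil$, producing the non-sparse minimax rate. The main technical obstacle is the construction of tests that are uniformly sharp over the full sieve in the misspecified setting (where the target $F\lambda^\ast$ differs from $F_0$); the known-$\sigma$ Gaussian structure is what makes this tractable, since explicit Gaussian chi-squared tails replace the abstract Hellinger and bracketing constructions one would otherwise need, and the bias $\|F_0-F\lambda^\ast\|_2$ drops out of the tests because $F^T(Y-F\lambda^\ast)$ is mean-zero under $P_0$ only after projecting onto the column space of $F$, a point one must check carefully.
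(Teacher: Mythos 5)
Your proposal is correct and follows essentially the same route as the paper's proof: prior concentration for (DDG2) through the $A$--$\eta$ decomposition and the Dirichlet small-ball bound (Lemma \ref{le:conprob}, Corollary \ref{coro:pclr}), a sieve of approximately sparse vectors with bounded $\|\lambda\|_1$ whose entropy and complement prior mass come from Lemmas \ref{le:coen} and \ref{le:cpp}, the explicit Gaussian misspecified test of Lemma \ref{le:test} part b built on the orthogonality $F^T(F\lambda^\ast-F_0)=0$, and, under (B2b), using (B3) to first force the posterior onto $\{\|\lambda\|_1\le KA^\ast\}$ before reducing to an effective sparsity of order $\sqrt{n/\log M}$. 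The only minor deviation is that you obtain the double-Dirichlet small-ball probability by conditioning on the signs of the $s$ support coordinates (a $2^{-s}$ factor absorbed into the exponent), whereas the paper computes it directly for the double Dirichlet in Lemma \ref{le:conprob} part c; both give the same order.
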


Theorem \ref{thm:LR} suggests that in order to obtain the fastest posterior convergence rate for prediction, we can choose the $\lambda^\ast$ having the minimal $||\lambda^\ast||_0$ among all minimizers of $||F\lambda-F_0||_2$. This suggests that the posterior measure tends to concentrate on the sparsest $\lambda^\ast$ that achieves the same prediction accuracy, which explains the sparse adaptivity. The non-uniqueness happens when $M>n$.

\section{Experiments}
As suggested by \cite{Yuhong2001}, the estimator $\hat{f}_n$ depends on the order of the observations and one can randomly permute the order a number of times and average the corresponding estimators. In addition, one can add a third step of estimating $f_1,\ldots,f_M$ with the full dataset as $\hat{f}_1,\ldots,\hat{f}_M$ and setting the final estimator as $\tilde{f}=\sum_{j=1}^M\hat{\lambda}_j\hat{f}_j$.
We will adopt this strategy and our splitting and aggregation scheme can be summarized as follows. First, we randomly divide the entire $n$ samples into two subsets $S_1$ and $S_2$ with $|S_1|=n_1$ and $|S_2|=n_2$. As a default, we set $n_1=0.75n$ and $n_2=0.25n$. Using $S_1$ as a training set, we obtain $M$ base learners $\hat{f}_1^{(n_1)},\ldots, \hat{f}_M^{(n_1)}$. Second, we apply the above MCMC algorithms to aggregate these learners into $\hat{f}^{(n_1)}=\sum_{j=1}^M\hat{\lambda}_j\hat{f}_j^{(n_1)}$ based on the $n_2$ aggregating samples. Finally, we use the whole dataset to train these base learners, which gives us $\hat{f}_j^{(n)}$, and the final estimator is $\hat{f}^{(n)}=\sum_{j=1}^M\hat{\lambda}_j\hat{f}_j^{(n)}$. Therefore, one basic requirement on the base learners is that they should be stable in the sense that $\hat{f}_j^{(n)}$ can not be dramatically different from $\hat{f}_j^{(n_1)}$ (e.g. CART might not be a suitable choice for the base learner).

\subsection{Bayesian linear aggregation}
In this subsection, we apply the Bayesian LA methods to the linear regression $Y=X\lambda+\epsilon$, with $X\in\bbR^M$ and $\epsilon\sim N(0,\sigma^2I_n)$. Since every linear aggregation problem can be reformed as a linear regression problem, this is a simple canonical setting for testing our approach. We consider two scenarios: 1. the sparse case where the number of nonzero components in the regression coefficient $\lambda$ is smaller than $M$ and the sample size $n$; 2. the non-sparse case where $\lambda$ can have many nonzero components, but the $l_1$ norm $||\lambda||_1=\sum_{j=1}^M|\lambda_j|$ remains constant as $M$ changes. We vary model dimensionality by letting $M=5$, $20$, $100$ and $500$.

We compare the Bayesian LA methods with lasso, ridge regression and horseshoe. Lasso \citep{Tibshirani1996} is widely used for linear models, especially when $\lambda$ is believed to be sparse. In addition, due to the use of $l_1$ penalty, the lasso is also minimax optimal when $\lambda$ is $l_1$-summable \citep{Raskutti2011}. Ridge regression \citep{Hoerl1970} is a well-known shrinkage estimator for non-sparse settings. Horseshoe \citep{Carvalho2010} is a Bayesian continuous shrinkage prior for sparse regression from the family of global-local mixtures of Gaussians \citep{Polson2010}. Horseshoe is well-known for its robustness and excellent empirical performance for sparse regression, but there is a lack of theoretical justification. $n$ training samples are used to fit the models and $N-n$ testing samples are used to calculate the prediction root mean squared error (RMSE) $\big\{(N-n)^{-1}\sum_{i=n+1}^N(\hat{y}_i-y_i)^2\big\}^{1/2}$, where $\hat{y}_i$ denotes the prediction of $y_i$.

The MCMC algorithm for the Bayesian LA method is run for 2,000 iterations, with the first 1,000 iterations as the burn-in period. We set $\alpha=1$, $\gamma=2$, $a_0=0.01$ and $b_0=0.01$ for the hyperparameters. The tuning parameters in the MH steps are chosen so that the acceptance rates are around $40\%$.
The lasso is implemented by the \texttt{glmnet} package in \texttt{R}, the ridge is implemented by the \texttt{lm.ridge} function in \texttt{R} and horseshoe is implemented by the \texttt{monomvn} package in \texttt{R}. The iterations for horseshoe is set as the default 1,000. The regularization parameters in Lasso and ridge are selected via cross-validation.

\subsubsection{Sparse case}
In the sparse case, we choose the number of non-zero coefficients to be $5$. The simulation data are generated from the following model:
\begin{align*}
  &(S)&  y=-0.5x_1+x_2+0.4x_3-x_4+0.6x_5+\epsilon,\quad \epsilon\sim N(0,0.5^2),&
\end{align*}
with $M$ covariates $x_1,\ldots,x_M\sim$ i.i.d $N(0,1)$. The training size is set to be $n=100$ and testing size $N-n=1000$. As a result, (S) with $M=5$ and $20$ can be considered as moderate dimensional, while $M=100$ and $M=500$ are relatively high dimensional.

\begin{table*}[h!]
  \centering
\begin{tabular}{c|C{2cm}C{2cm}C{2cm}C{2cm}}
\hhline{=====}
   $M$ & 5 & 20 & 100 & 500 \\
\hline
       \multirow{2}{*}{LA} & .511 & .513 & .529 & .576  \\
       & (0.016) & (0.016) & (0.020) & (0.023)  \\
       \hline
      \multirow{2}{*}{Lasso}& .514 & .536 & .574 & .613 \\
      & (0.017) & (0.020) & (0.039) & (0.042)  \\
       \hline
       \multirow{2}{*}{Ridge}& .514 & .565 & 1.23 & 2.23  \\
       &(0.017) & (0.019) & (0.139) & (0.146)  \\
       \hline
       \multirow{2}{*}{Horseshoe}& .512 & .519 & .525 & .590  \\
       &(0.016) & (0.014) & (0.019) & (0.022)  \\
       \hhline{=====}
\end{tabular}
  \caption{RMSE for the sparse linear model (S). The numbers in the parentheses indicate the standard deviations. All results are based on 100 replicates.}\label{table:slr}
\end{table*}

From Table \ref{table:slr}, all the methods are comparable when there is no nuisance predictor ($M=5$). However, as more nuisance predictors are included, the Bayesian LA method and horseshoe have noticeably better performance than the other two methods. For example, for $M=100$, the Bayesian LA method has $8\%$ and $53\%$ improvements over lasso and ridge, respectively. In addition, as expected, ridge deteriorates more dramatically than the other two as $M$ grows. It appears that Bayesian LA is more computationally efficient than horseshoe. For example, under $m=100$ it takes horseshoe 50 seconds to draw 1,000 iterations but only takes LA about 1 second to draw 2,000 iterations.

\begin{figure}[htp]
\centering
  \includegraphics[width=6in]{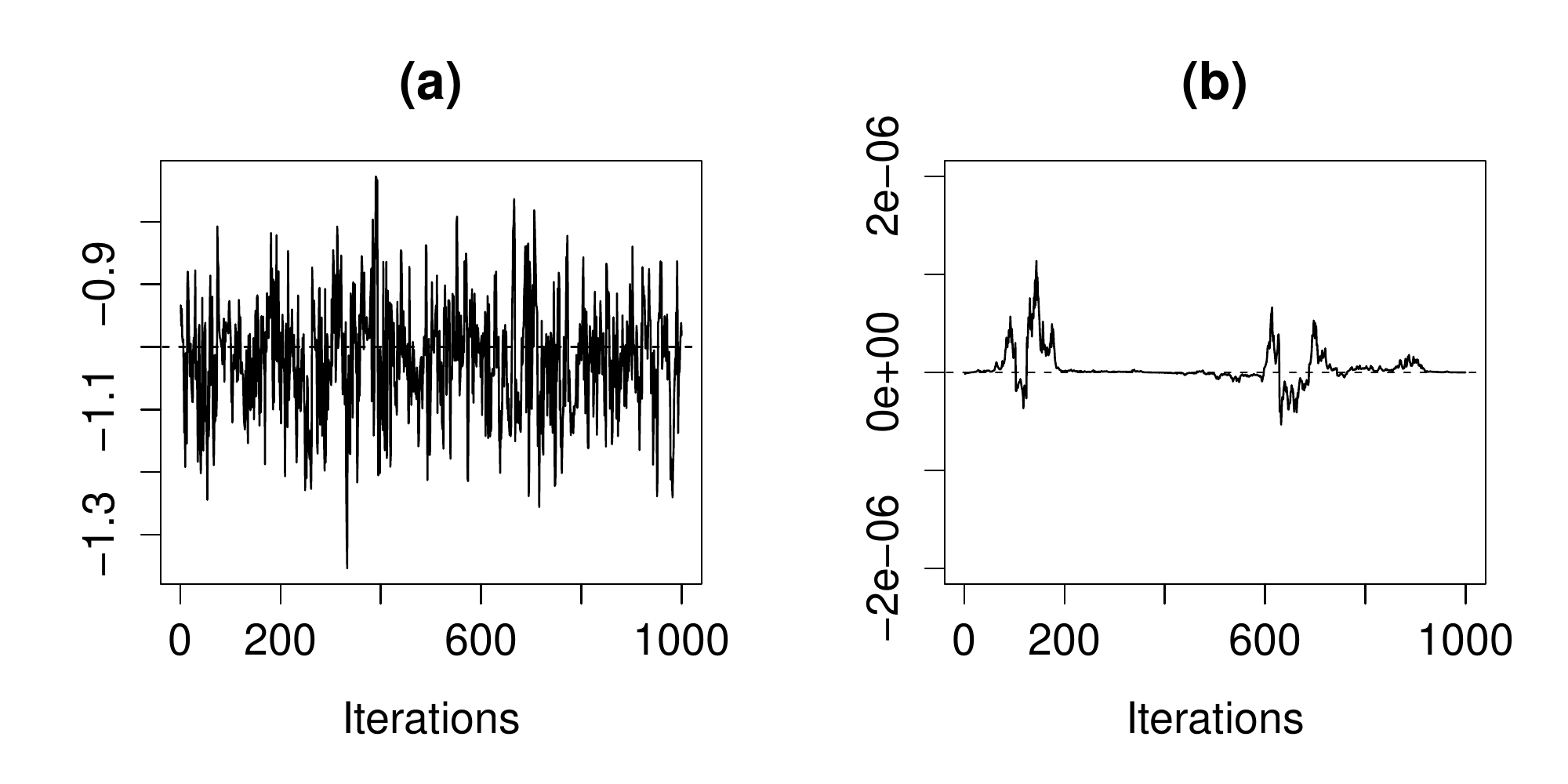}\\
  \caption{Traceplots for a non-zero regression coefficient and a zero coefficient.}\label{fig:2}
\end{figure}

\begin{figure}[htp]
\centering
  \includegraphics[width=6in]{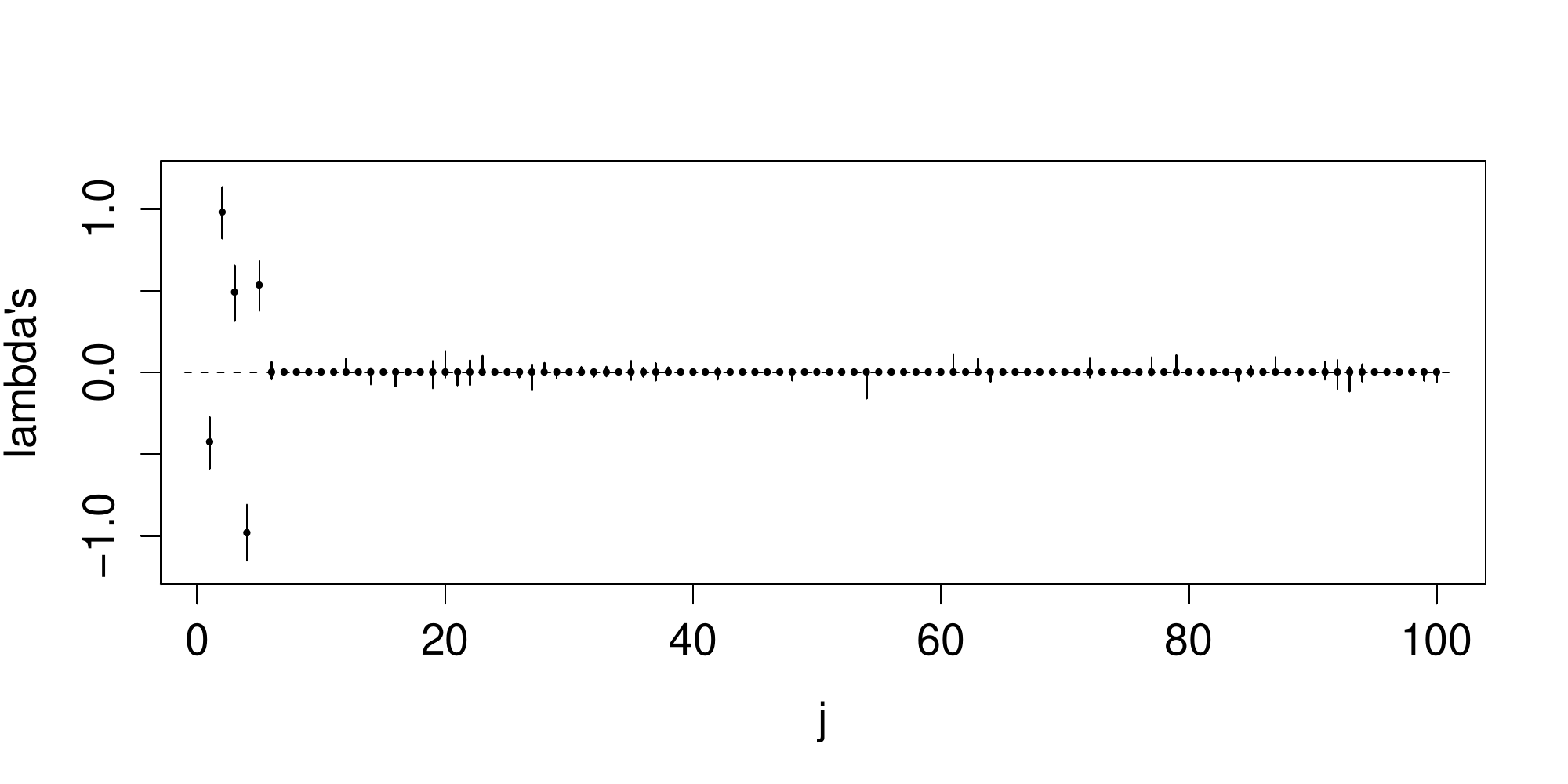}\\
  \caption{95\% posterior credible intervals for $\lambda_1,\ldots,\lambda_{100}$ in sparse regresion. The solid dots are the corresponding posterior medians.}\label{fig:3}
  \includegraphics[width=6in]{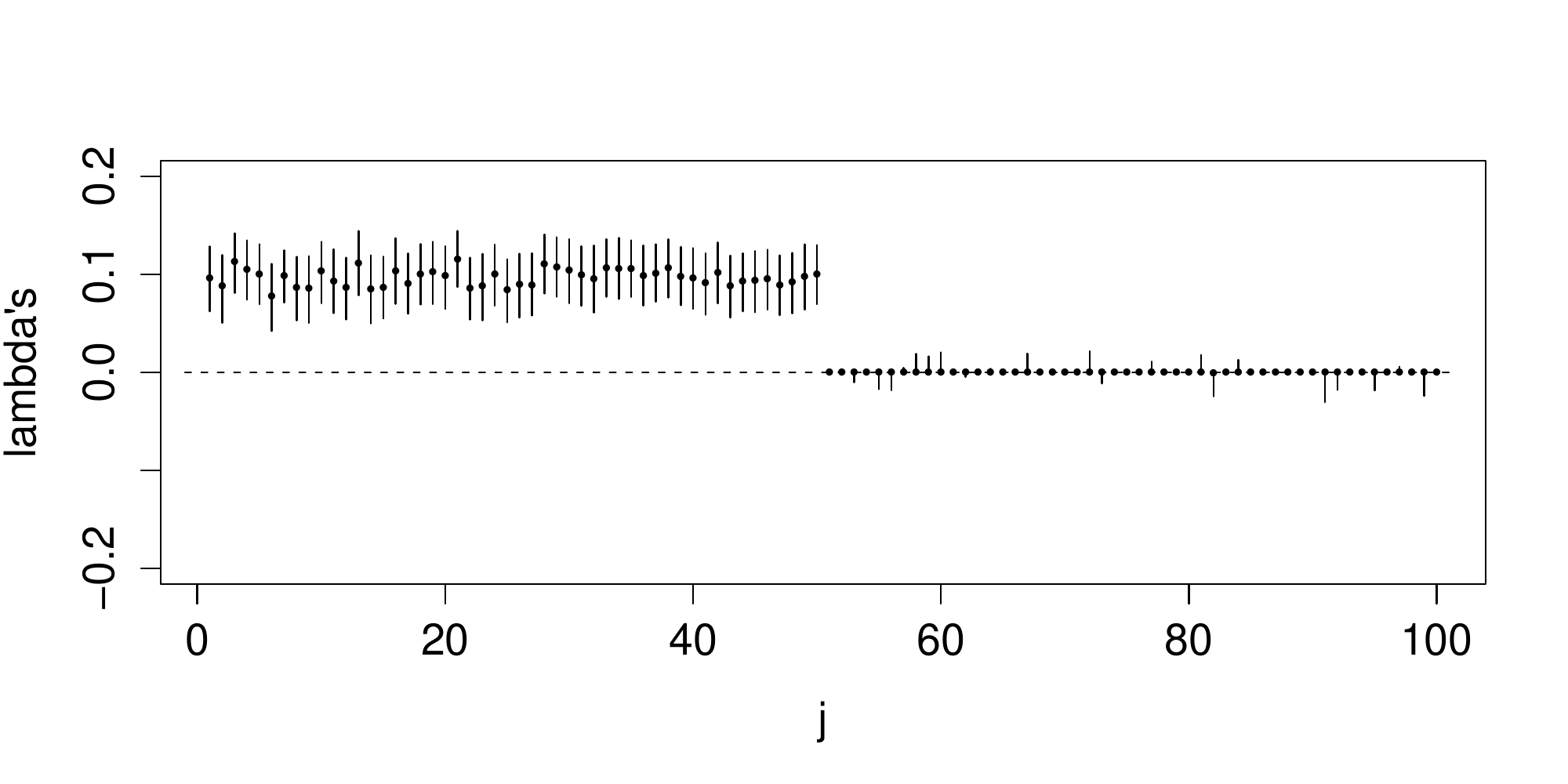}\\
  \caption{95\% posterior credible intervals for $\lambda_1,\ldots,\lambda_{100}$ in non-sparse regression. The solid dots are the corresponding posterior medians.}\label{fig:4}
\end{figure}

Fig.~\ref{fig:2} displays the traceplots after the burn-in for a typical non-zero and a typical zero regression coefficient respectively under $M=100$. The non-zero coefficient mixes pretty well according to its traceplot. Although the traceplot of the zero coefficient exhibits some small fluctuations, their magnitudes are still negligible compared to the non-zero ones. We observe that these fluctuant traceplots like Fig.~\ref{fig:2}(b) only happens for those $\lambda_j$'s whose posterior magnitudes are extremely small. The typical orders of the posterior means of those $\lambda_j$'s in LA that correspond to unimportant predictors range from $10^{-17}$ to $10^{-2}$. However, the posterior medians of unimportant predictors are less than $10^{-4}$ (see Fig.~\ref{fig:3}). This suggests that although the coefficients are not exactly to zero, the estimated regression coefficients with zero true values are still negligible compared to the estimators of the nonzero coefficients. In addition, for LA the posterior median appears to be a better and more robust estimator for sparse regression than the posterior mean.

\subsubsection{Non-sparse case}
In the non-sparse case, we use the following two models as the truth:
\begin{align*}
    &(NS1) & y=\sum_{j=1}^M\frac{3(-1)^j}{j^2}x_j+\epsilon,\quad \epsilon\sim N(0,0.1^2),&&\\
    &(NS2) & y=\sum_{j=1}^{\lfloor M/2\rfloor}\frac{5}{\lfloor M/2\rfloor}x_j+\epsilon,\quad \epsilon\sim N(0,0.1^2),&&
\end{align*}
with $M$ covariates $x_1,\ldots,x_M\sim$ i.i.d $N(0,1)$. In (NS1), all the predictors affect the response and the impact of predictor $x_j$ decreases quadratically in $j$. Moreover, $\lambda$ satisfies the $l_1$-summability since $\lim_{p\to\infty}||\lambda||_1=\pi^2/3\thickapprox 4.9$. In (NS2), half of the predictors have the same influence on the response with $||\lambda||_1=5$.  The training size is set to be $n=200$ and testing size $N-n=1000$ in the following simulations.

\begin{table*}[h!]
  \centering
\begin{tabular}{c|c|C{2cm}C{2cm}C{2cm}C{2cm}}
\hhline{======}
   & $M$ & 5 & 20 & 100 & 500 \\
\hline
      \multirow{6}{*}{NS1}&  \multirow{2}{*}{LA} & .101 & .112 & .116 & .129  \\
      & & (0.002) & (0.003) & (0.005) & (0.007)  \\
       \hhline{~-----}
      & \multirow{2}{*}{Lasso}& .105 & .110 & .116 & .155 \\
      && (0.006) & (0.005) & (0.005) & (0.006)  \\
       \hhline{~-----}
      & \multirow{2}{*}{Ridge}& .102 & .107 & .146 & 2.42  \\
      & &(0.003) & (0.004) & (0.008) & (0.053)  \\
      \hhline{~-----}
      & \multirow{2}{*}{Horseshoe}& .102 & .111 & .114 & .136  \\
      & &(0.003) & (0.003) & (0.004) & (0.005)  \\
       \hline
       \multirow{6}{*}{NS2}&  \multirow{2}{*}{LA} & .101 & .104 & .121 & .326  \\
      & & (0.002) & (0.003) & (0.005) & (0.008)  \\
       \hhline{~-----}
      & \multirow{2}{*}{Lasso}& .111 & .106 & .131 & .323 \\
      && (0.006) & (0.003) & (0.007) & (0.008)  \\
       \hhline{~-----}
      & \multirow{2}{*}{Ridge}& .103 & .107 & .140 & .274  \\
      & &(0.003) & (0.003) & (0.008) & (0.010)  \\
      \hhline{~-----}
      & \multirow{2}{*}{Horseshoe}& .102 & .104 & .124 & .308  \\
      & &(0.003) & (0.003) & (0.004) & (0.007)  \\
       \hhline{======}
\end{tabular}
  \caption{RMSE for the non-sparse linear models (NS1) and (NS2). All results are based on 100 replicates.}\label{table:nslr}
\end{table*}

From Table \ref{table:nslr}, all the methods have comparable performance when $M$ is moderate (i.e $5$ or $20$) in both non-sparse settings. In the non-sparse settings, horseshoe also exhibits excellent prediction performance. In most cases, LA, lasso and horseshoe have similar performance. As $M$ increases to an order comparable to the sample size, LA and horseshoe tend to be more robust than lasso and ridge. As $M$ becomes much greater than $n$, LA, lasso and horseshoe remain good in (NS1) while breaking down in (NS2); ridge breaks down in (NS1) while becoming the best in (NS2). It might be because in (NS1), although all $\lambda_j$'s are nonzero, the first several predictors still dominate the impact on $y$. In contrast, in (NS2), half of $\lambda_j$'s are nonzero and equally small. Fig.~\ref{fig:4} plots 95\% posterior credible intervals for $\lambda_1,\ldots,\lambda_{100}$ of (NS2) under $M=100$. According to Section \ref{se:minimax}, the spasrse/non-sparse boundary for $(NS2)$ under $M=100$ is $\sqrt{200}/\log{100}\approx 3\ll 50$. Therefore, the results displayed in Fig.~\ref{fig:4} can be classified into the non-sparse regime.
A simple variable selection based on these credible intervals correctly identifies all $50$ nonzero components.

\subsubsection{Robustness against the hyperparameters}
Since changing the hyperparameter $\alpha$ in the Dirichlet prior is equivalent to changing the hyperparameter $\gamma$, we perform a sensitivity analysis for $\gamma$ in the above two regression settings with $M=100$.

\begin{figure}[htp]
\centering
\begin{tabular}{cc}
    \includegraphics[scale=0.36]{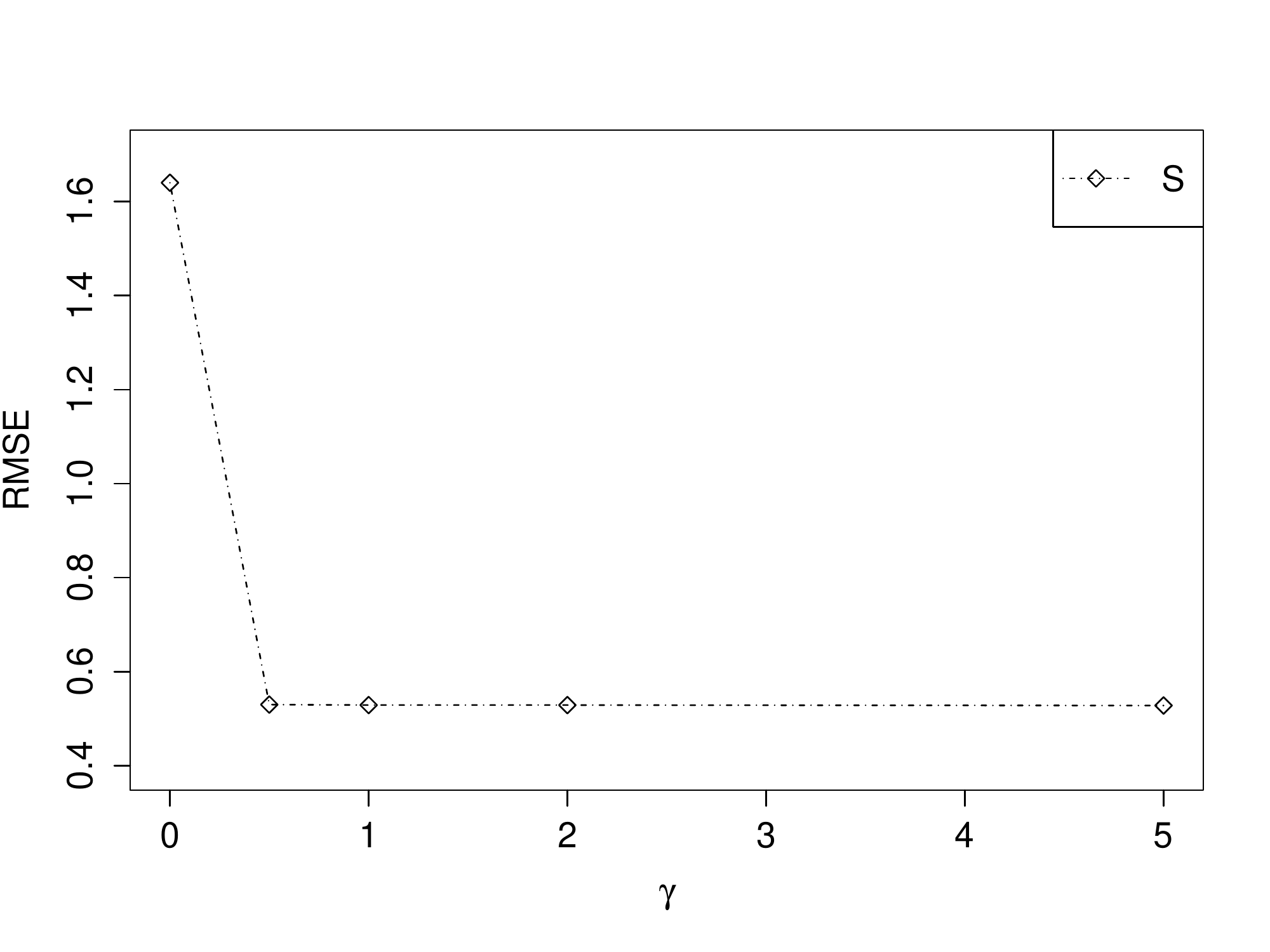}
    &
    \includegraphics[scale=0.36]{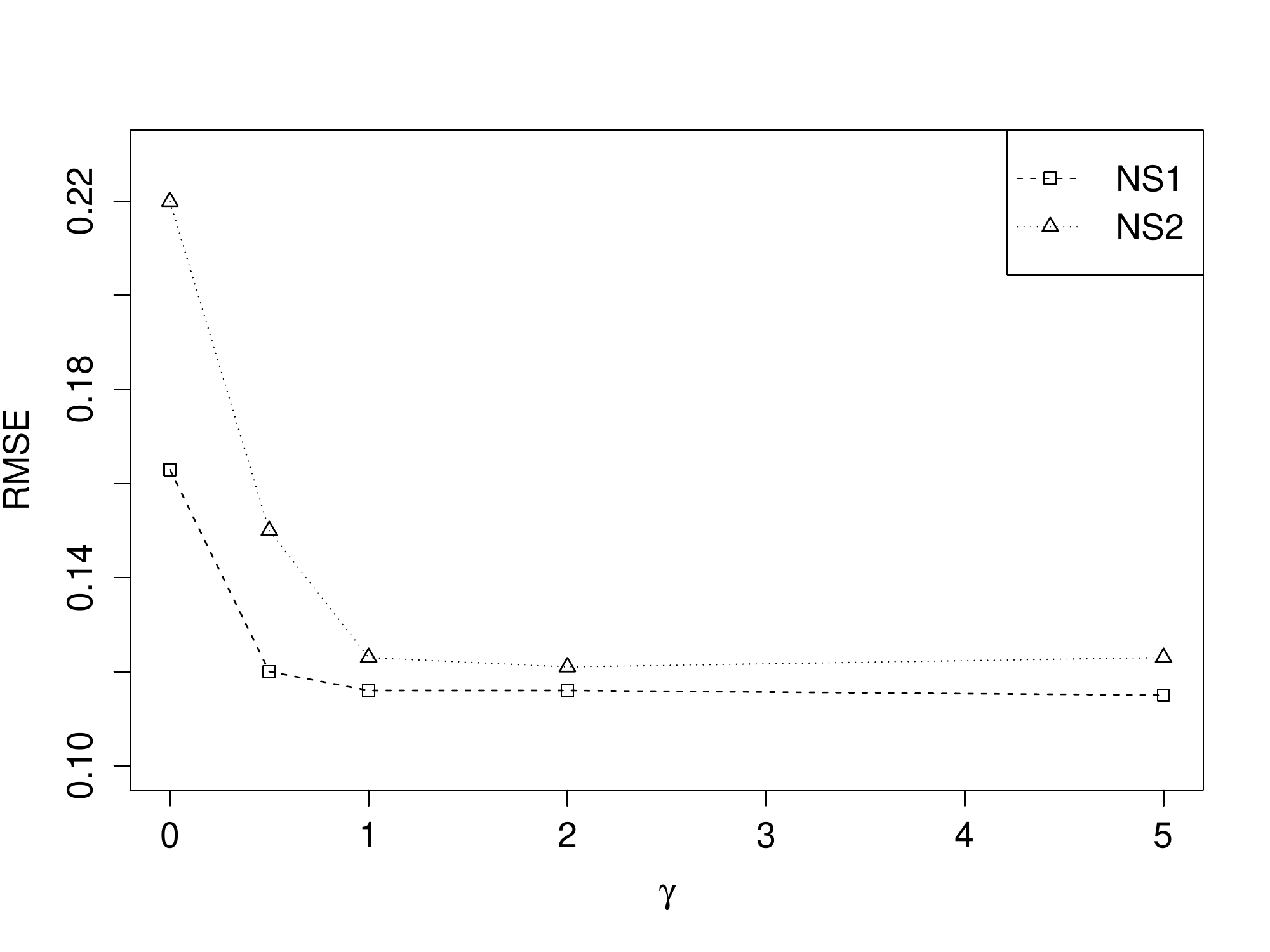}
\end{tabular}
\caption{Robustness of the Bayesian LA methods against the hyperparameter $\gamma$. The results are based on 100 replicates.}
\label{fig:rob}
\end{figure}

From Figure \ref{fig:rob}, the Bayesian LA method tends to be robust against the change in $\gamma$ at a wide range. As expected, the Bayesian LA method starts to deteriorate as $\gamma$ becomes too small. In particular, when $\gamma$ is zero, the Dirichlet prior no longer favors sparse weights and the RMSE becomes large (especially for the sparse model) in all three settings. However, the Bayesian LA methods tend to be robust against increase in $\gamma$. As a result, we would recommend choosing $\gamma=2$ in practice.

\subsection{Bayesian convex aggregation}
In this subsection, we conduct experiments for the Bayesian convex aggregation method.

\subsubsection{Simulations}
The following regression model is used as the truth in our simulations:
\begin{align}\label{eq:sim1}
    y=x_1+x_2+3x_3^2-2e^{-x_4}+\epsilon,\quad \epsilon\sim N(0,0.5),
\end{align}
with $p$ covariates $x_1,\ldots,x_d\sim$ i.i.d $N(0,1)$. The training size is set to be $n=500$ and testing size $N-n=1000$ in the following simulations.

In the first simulation, we choose $M=6$ base learners: CART, random forest (RF), lasso, SVM, ridge regression (Ridge) and neural network (NN). The Bayesian aggregation (BA) is compared with the super learner (SL). SL is implemented by the \texttt{SuperLearner} package in \texttt{R}. The implementations of the base learners are described in Table \ref{table:packages}. The MCMC algorithm for the Bayesian CA method is run for 2,000 iterations, with the first 1,000 iterations as the burn-in period. We set $\alpha=1$, $\gamma=2$ for the hyperparameters. The simulation results are summarized in Table \ref{table:1}, where square roots of mean squared errors (RMSE) of prediction based on 100 replicates are reported.

\begin{table*}[h!]
  \centering
\begin{tabular}{C{3cm}|C{3cm}|C{3cm}|C{3cm}}
  \hhline{====}
   Base learner & CART & RF & Lasso \\
    \hline
       \texttt{R} package & \texttt{rpart} & \texttt{randomForest} & \texttt{glmnet} \\
      \hhline{====}
     SVM & Ridge & NN & GAM \\
       \hline
       \texttt{e1071} & \texttt{MASS} & \texttt{nnet} & \texttt{gam} \\
       \hhline{====}
\end{tabular}
  \caption{Descriptions of the base learners. }\label{table:packages}
\end{table*}

\begin{table*}[h!]
  \centering
\begin{tabular}{c|C{1.2cm}C{1.2cm}C{1.2cm}C{1.2cm}C{1.2cm}C{1.2cm}C{1.2cm}C{1.2cm}}
\hhline{=========}
   $d$ & CART & RF & Lasso & SVM & Ridge & NN & SL & BA \\
\hline
       \multirow{2}{*}{5} & 3.31 & 3.33 & 5.12 & 2.71 & 5.12 & 3.89 & 2.66 & \textbf{2.60} \\
       & (0.41) & (0.42) & (0.33) & (0.49) & (0.33) & (0.90) & (0.48) & (0.48) \\
       \hline
      \multirow{2}{*}{20}& 3.32& 3.11 & 5.18 & 4.10 & 5.23 & 5.10 & 3.13 & \textbf{3.00} \\
      & (0.41) & (0.49) & (0.37) & (0.46) & (0.38) & (1.57) & (0.54) & (0.48) \\
       \hline
       \multirow{2}{*}{100}& 3.33 & 3.17 & 5.17 & 5.48 & 5.64 & 7.12 & 3.19 & \textbf{3.03} \\
       &(0.38) & (0.45) & (0.32) & (0.35) & (0.33) & (1.31) & (0.45) & (0.45) \\
       \hhline{=========}
\end{tabular}
  \caption{RMSE for the first simulation. All results are based on 100 replicates.}\label{table:1}
\end{table*}

In the second simulation, we consider the case when $M$ is moderately large. We consider $M=26$, $56$ and $106$ by introducing $(M-6)$ new base learners in the following way. In each simulation, for $j=1,\ldots, M-6$, we first randomly select a subset $S_j$ of the covariates $\{x_1,\ldots,x_d\}$ with size $p=\lfloor\min\{n^{1/2}, {d/3}\}\rfloor$. Then the $j$th base learner $f_j$ is fitted by the general additive model (GAM) with the response $y$ and covariates in $S_j$ as predictors. This choice of new learners is motivated by the fact that the truth is sparse when $M$ is large and brutally throwing all covariates into the GAM tends to have a poor performance. Therefore, we expect that a base learner based on GAM that uses a small subset of the covariates containing the important predictors $x_1$, $x_2$ $x_3$ and $x_4$ tends to have better performance than the full model. In addition, with a large $M$ and moderate $p$, the probability that one of the randomly selected $(M-6)$ models contains the truth is high. In this simulation, we compare BA with SL and a voting method using the average prediction across all base learners. For illustration, the best prediction performance among the $(M-6)$ random-subset base learners is also reported. Table \ref{tab:2} summarizes the results.

\begin{table*}[h!]
  \centering
\begin{tabular}{c|c|C{1.7cm}C{1.7cm}C{1.7cm}C{1.7cm}}
\hhline{======}
   $d$ & $M$ & Best & Voting & SL & BA \\
\hline
         \multirow{6}{*}{$20$} & \multirow{2}{*}{26}  & 3.14 & 4.63 & 3.40 & \textbf{2.78} \\
       & & (0.82) & (0.48) & (0.60) & (0.52) \\
       \hhline{~-----}
      & \multirow{2}{*}{56}& 2.86 & 4.98 & 3.33 & \textbf{2.79} \\
      & & (1.57) & (0.71) & (0.86) & (0.78) \\
        \hhline{~-----}
      & \multirow{2}{*}{106} & 2.79 & 4.95 & 3.23 & \textbf{2.73} \\
       & & (0.62) & (0.60) & (0.70) & (0.61) \\
       \hline
        \multirow{6}{*}{$100$} & \multirow{2}{*}{26}  & 3.14 & 4.72 & 3.09 & \textbf{2.78} \\
       & & (0.71) & (0.44) & (0.50) & (0.45) \\
       \hhline{~-----}
      & \multirow{2}{*}{56}& 2.95 & 4.93 & 3.07 & \textbf{2.78} \\
      & & (0.46) & (0.45) & (0.50) & (0.47) \\
        \hhline{~-----}
      & \multirow{2}{*}{106} & 2.84 & 4.90 & 2.98 & \textbf{2.69} \\
       & & (0.45) & (0.47) & (0.55) & (0.51) \\
       \hline
          \multirow{6}{*}{$500$} & \multirow{2}{*}{26}  & 5.21 & 5.75 & 3.77 & \textbf{3.19} \\
       & & (0.75) & (0.50) & (0.650) & (0.59) \\
       \hhline{~-----}
      & \multirow{2}{*}{56}& 4.86 & 5.92 & 4.02 & \textbf{3.18} \\
      & & (0.78) & (0.59) & (0.73) & (0.70) \\
        \hhline{~-----}
      & \multirow{2}{*}{106} & 4.65 & 5.98 & 4.18 & \textbf{3.13} \\
       & & (0.69) & (0.45) & (0.52) & (0.49) \\
       \hhline{======}
\end{tabular}
  \caption{RMSE for the second simulation study. All results are based on 100 replicates.}\label{tab:2}
\end{table*}

%

\subsubsection{Applications}
We apply BA to four datasets from the UCI repository. Table \ref{table:data} provides a brief description of those datasets.  We use CART, random forest, lasso, support vector machine, ridge regression and neural networks as the base learners. We run 40,000 iterations for the MCMC for the BA for each dataset and discard the first half as the burn-in. Table \ref{table:BAapp} displays the results. As we can see, for 3 datasets (auto-mpg, concrete and forest), the aggregated models perform the best. In particular, for the auto-mpg dataset, BA has 3\% improvement over the best base learner. Even for the slump dataset, aggregations still have comparable performance to the best base learner. The two aggregation methods SL and BA have similar performance for all the datasets.

\begin{table*}[h!]
  \centering
\begin{tabular}{c|C{3cm}C{3cm}C{3cm}}
\hhline{====}
   dataset & sample size & \# of predictors & response variable \\
\hline
       auto-mpg & 392 & 8 & mpg \\
       \hline
     concrete & 1030 & 8 & CCS$^{\ast}$ \\
       \hline
       slump & 103 & 7 & slump \\
       \hline
     forest & 517 & 12 & log(1+area) \\
       \hhline{====}
\end{tabular}
  \caption{Descriptions of the four datasets from the UCI repository. CCS: concrete compressive strength.}\label{table:data}
\end{table*}

\begin{table*}[h!]
  \centering
\begin{tabular}{c|C{1cm}C{1cm}C{1cm}C{1cm}C{1cm}C{1cm}C{1cm}C{1cm}C{1cm}}
\hhline{==========}
   dataset & Cart & RF & Lasso & SVM & Ridge & NN & GAM & SL & BA \\
\hline
       auto-mpg & 3.42 & 2.69 & 3.38 & 2.68 & 3.40 & 7.79 & 2.71 & \textbf{2.61} & \textbf{2.61} \\
       \hline
     concrete & 9.40 & 5.35 & 10.51 & 6.65 & 10.50 & 16.64 & 7.95 & \textbf{5.31} & 5.33 \\
       \hline
       slump & 7.60 & \textbf{6.69} & 7.71 & 7.05 & 8.67 & 7.11 & 6.99 & 7.17 & 7.03 \\
       \hline
     forest & .670 & .628 & .612 & .612 & .620 & .613 & .622 & .606 & \textbf{.604} \\
       \hhline{==========}
\end{tabular}
  \caption{RMSE of aggregations for real data applications. All results are based on 10-fold cross-validations.}\label{table:BAapp}
\end{table*}

\section{Proofs of the main results}\label{se:proofs}
Let $K(P,Q)=\int\log(dP/dQ)dP$ be the KL divergence between two probability distributions $P$ and $Q$, and $V(P,Q)=\int|\log(dP/dQ)-K(P,Q)|^2dP$ be a discrepancy measure.

\subsection{Concentration properties of Dirichlet distribution and double Dirichlet distribution}\label{se:pcr}
According to the posterior asymptotic theory developed in \cite{Ghosal2000} (for iid observations, e.g. regression with random design, such as the aggregation problem in section \ref{se:BA}), to ensure a posterior convergence rate of at least $\epsilon_n$, the prior has to put enough mass around $\theta^\ast $ in the sense that \begin{align*}
    & \text{(PC1)} & &\Pi(B(\theta^\ast ,\epsilon_n))\geq e^{-n\epsilon_n^2C},\ \text{with } &&\\ &&
    B(\theta^\ast ,\epsilon)=&\{\theta\in\Theta: K(P_{\theta^\ast }, P_{\theta})\leq \epsilon^2,\ V(P_{\theta^\ast }, P_{\theta})\leq \epsilon^2\},&&
\end{align*}
for some $C>0$. For independent but non-identically distributed (noniid) observations (e.g. regression with fixed design, such as the linear regression problem in section \ref{se:lreg}), where the likelihood takes a product form $P^{(n)}_{\theta}(Y_1,\ldots,Y_n)=\prod_{i=1}^n P_{\theta,i}(Y_i)$, the corresponding prior concentration condition becomes \citep{Ghosal2007}
\begin{align*}
    & \text{(PC2)} & &\Pi(B_n(\theta^\ast ,\epsilon_n))\geq e^{-n\epsilon_n^2C}, \text{ with }&&\\ & &
    B_n(\theta^\ast ,\epsilon)=&\bigg\{\theta\in\Theta: \frac{1}{n}\sum_{i=1}^nK(P_{\theta^\ast ,i}, P_{\theta,i})\leq \epsilon^2,\ \frac{1}{n}\sum_{i=1}^nV(P_{\theta^\ast ,i}, P_{\theta,i})\leq \epsilon^2\bigg\}.&&
\end{align*}
If a (semi-)metric $d_n$, which might depend on $n$, dominates $KL$ and $V$ on $\Theta$, then (PC) is implied by $\Pi(d_n(\theta,\theta^\ast )\leq c\epsilon_n)\geq e^{-n\epsilon_n^2C}$ for some $c>0$. In the aggregation problem with a random design and parameter $\theta=\lambda$, we have $K(P_{\theta^\ast }, P_{\theta})=V(P_{\theta^\ast }, P_{\theta})=\frac{1}{2\sigma^2}||\sum_{j=1}^M(\lambda_j-\lambda_j^\ast )f_j||_Q^2
=\frac{1}{2\sigma^2}(\lambda-\lambda^\ast )^T\Sigma(\lambda-\lambda^\ast )$. Therefore, we can choose $d_n(\theta,\theta^\ast )$ as $d_{\Sigma}(\lambda,\lambda^\ast )=||\Sigma^{1/2}(\lambda-\lambda^\ast )||_2$. In the linear aggregation problem with a fixed design and $\theta=\lambda$, $\sum_{j=1}^nK(P_{\theta^\ast ,i}, P_{\theta,i})=\sum_{j=1}^nV(P_{\theta^\ast ,i}, P_{\theta,i})=\frac{1}{2\sigma^2}||F(\lambda-\lambda^\ast )||_2$, where $P_{\theta,i}(Y)=P_{\lambda}(Y|F^{(i)})$. Therefore, we can choose $d_n(\theta,\theta^\ast )$ as $d_F(\lambda,\lambda^\ast )=||\frac{1}{\sqrt{n}}F(\lambda-\lambda^\ast )||_2$.

For CA and LA, the concentration probabilities can be characterized by those of $\lambda^\ast \in \Lambda$ and $\eta^\ast \in D_{M-1}=\{\eta\in\bbR^M, ||\eta||_1=1\}$.
Therefore, it is important to investigate the concentration properties of the Dirichlet distribution and the double Dirichlet distribution as priors over $\Lambda$ and $D_{M-1}$.
The concentration probabilities $\Pi(d_{\Sigma}(\lambda,\lambda^\ast )\leq c\epsilon)$ and $\Pi(d_{F}(\eta,\eta^\ast )\leq c\epsilon)$ depend on the location of the centers $\lambda^\ast $ and $\eta^\ast $, which are characterized by their geometrical properties, such as sparsity and $l_1$-summability. The next lemma characterizes these concentration probabilities and is of independent interest.

\begin{lemma}\label{le:conprob}
Assume (A1) and (B1).
\begin{enumerate}
  \item[a.] Assume (A2). Under the prior (DA), for any $\gamma\geq1$,
  \begin{align*}
    \Pi(d_{\Sigma}(\lambda,\lambda^\ast )\leq \epsilon)\geq \exp\bigg\{-C\gamma s\log\frac{M}{\epsilon} \bigg\},\quad\text{for some } C>0.
  \end{align*}
  \item[b.] Under the prior (DA), for any $m>0$, any $\lambda\in \Lambda$ and any $\gamma\geq1$,
  \begin{align*}
    &\Pi\bigg(d_{\Sigma}(\lambda,\lambda^\ast )\leq \epsilon+\frac{C}{\sqrt{m}}\bigg)\geq \exp\bigg\{-C\gamma m\log\frac{M}{\epsilon}  \bigg\},\\
    &\Pi(d_{\Sigma}(\lambda,\lambda^\ast )\leq \epsilon)\geq \exp\bigg\{-C\gamma M\log\frac{M}{\epsilon}  \bigg\}, \quad\text{for some } C>0.
  \end{align*}
  \item[c.] Assume (B2a). Under the prior for $\eta$ in (DDG2), for any $\gamma\geq1$,
  \begin{align*}
    \Pi(d_F(\eta,\eta^\ast )\leq \epsilon)\geq \exp\bigg\{-C\gamma s\log\frac{M}{\epsilon} \bigg\},\quad\text{for some } C>0.
  \end{align*}
  \item[d.] Under the prior for $\eta$ in (DDG2), for any $m>0$, any $\eta\in D_{M-1}$ and any $\gamma\geq1$,
  \begin{align*}
    &\Pi\bigg(d_F(\eta,\eta^\ast )\leq \epsilon+\frac{C}{\sqrt{m}}\bigg)\geq \exp\bigg\{-C\gamma m\log\frac{M}{\epsilon}  \bigg\},\\
    &\Pi(d_F(\eta,\eta^\ast )\leq \epsilon)\geq \exp\bigg\{-C\gamma M\log\frac{M}{\epsilon}  \bigg\},\quad\text{for some } C>0.
  \end{align*}
\end{enumerate}
\end{lemma}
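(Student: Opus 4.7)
By Cauchy--Schwarz and (A1), $|\Sigma_{ij}|\leq\sqrt{\Sigma_{ii}\Sigma_{jj}}\leq\kappa$, so $d_{\Sigma}(\lambda,\lambda^\ast)^2\leq\kappa\|\lambda-\lambda^\ast\|_1^2$. The analogous bound $d_F(\eta,\eta^\ast)\leq\sqrt{\kappa}\|\eta-\eta^\ast\|_1$ follows from (B1). After absorbing $\sqrt{\kappa}$ into $\epsilon$, in all four parts it suffices to lower bound the prior mass of an $\ell_1$-ball around a target vector, and part (a) will be the backbone from which (b)--(d) are deduced.

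\textbf{Part (a).} I would use the Gamma representation $\lambda_j=T_j/\sum_k T_k$ with $T_j\stackrel{\text{iid}}{\sim}\text{Ga}(\rho,1)$. Writing $S=\mathrm{supp}(\lambda^\ast)$ with $|S|=s$, split $S=S_1\sqcup S_2$ where $S_1=\{j\in S:\lambda^\ast_j\geq\epsilon/(4s)\}$; the small mass $\sum_{S_2}\lambda^\ast_j\leq\epsilon/4$ is absorbed into the tolerance. Consider the event $A=\{T_j\in[\lambda^\ast_j,\lambda^\ast_j(1+\delta)]\ \forall j\in S_1;\ T_j\leq\epsilon/(8M)\ \forall j\notin S_1\}$ with $\delta$ a small constant multiple of $\epsilon$. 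A direct check shows $\sum_k T_k\in[1-\epsilon/4,1+\epsilon/4]$ on $A$, whence $\|\lambda-\lambda^\ast\|_1=O(\epsilon)$. Independence factorizes $P(A)$; for $j\in S_1$ the Gamma density gives $P(T_j\in[\lambda^\ast_j,\lambda^\ast_j(1+\delta)])\geq C\rho\delta(\lambda^\ast_j)^\rho$, which after taking logs contributes $-O(\gamma s\log(M/\epsilon))$ using $\rho\delta=\alpha\epsilon/M^\gamma$ and the bound $\log\lambda^\ast_j\geq-\log(4s/\epsilon)$.

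\textbf{Main obstacle.} The delicate step is $\prod_{j\notin S_1}P(T_j\leq\epsilon/(8M))$. The naive bound $P(T_j\leq\eta_0)\geq\eta_0^\rho/e$ produces a spurious $e^{-(M-s_1)}$ factor. The fix is to use
\[
P(T_j>\eta_0)\ \leq\ \frac{1-\eta_0^\rho}{\Gamma(\rho+1)}+\frac{\rho}{e\,\Gamma(\rho+1)}\ =\ O\bigl(\rho\log(1/\eta_0)\bigr),
\]
valid for $\rho,\eta_0$ small, which together with $1-x\geq e^{-2x}$ yields $P(T_j\leq\eta_0)\geq\eta_0^{c\rho}$. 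The product over $j\notin S_1$ then contributes $-O(M\rho\log(M/\epsilon))$, and crucially $M\rho=\alpha M^{1-\gamma}\leq\alpha$ when $\gamma\geq 1$, so this is only $-O(\log(M/\epsilon))$, dominated by the $S_1$-contribution. Summing yields $\log P(A)\geq-C\gamma s\log(M/\epsilon)$.

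\textbf{Parts (b), (c), (d).} For (b), Maurey's empirical method in $L_2(Q)$ (with $\|f_j\|_Q\leq\sqrt{\kappa}$) produces, for any $\lambda^\ast\in\Lambda$, an $m$-sparse $\tilde\lambda^\ast\in\Lambda$ with $d_{\Sigma}(\tilde\lambda^\ast,\lambda^\ast)\leq C/\sqrt{m}$; the first inequality of (b) then follows from the triangle inequality and applying (a) to $\tilde\lambda^\ast$, and the second is simply (a) with $s=M$ (any vector is trivially $M$-sparse). For (c), decompose $\eta_j=\sigma_j\mu_j$ with $\sigma_j$ iid $\pm 1$ and $\mu\sim\text{Diri}(\rho,\ldots,\rho)$; on the sign-agreement event $\{\sigma_j=\mathrm{sign}(\eta^\ast_j):j\in S\}$, which has probability $2^{-s}$ and is independent of $\mu$, one checks $\|\eta-\eta^\ast\|_1=\|\mu-|\eta^\ast|\|_1$, so (a) applied to $|\eta^\ast|\in\Lambda_s$ delivers the bound (the $2^{-s}=e^{-s\log 2}$ prefactor is absorbed into $C$). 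Part (d) combines the sign trick with a signed Maurey: writing $F\eta^\ast=\sum_j|\eta^\ast_j|G_j$ with $G_j=\mathrm{sign}(\eta^\ast_j)F_j$ and $\|G_j\|_2\leq\sqrt{n\kappa}$, Maurey yields $m$-sparse $\tilde w\in\Lambda$ such that $\tilde\eta:=\mathrm{sign}(\eta^\ast)\cdot\tilde w\in D_{M-1}$ is $m$-sparse with $d_F(\tilde\eta,\eta^\ast)\leq C/\sqrt{m}$; apply (c) to $\tilde\eta$ and conclude by the triangle inequality, with the second inequality being (c) taken at $s=M$.
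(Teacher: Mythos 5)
Your proposal is correct, and its overall architecture (reduce to $\ell_1$-balls, treat the sparse case directly, sparsify via Maurey for the dense case, handle signs separately for the double Dirichlet) is the same as the paper's; in particular your parts (b) and (d) are essentially identical to the paper's route, since the Maurey empirical-method approximant is exactly the paper's preparatory Lemma \ref{le:prep2} (sample indices from the probability vector $\lambda^\ast$, resp.\ $|\eta^\ast|$, take empirical frequencies), followed by the triangle inequality and part (a)/(c), with the non-sparse bounds being (a)/(c) at $s=M$. Where you genuinely diverge is in (a) and (c). For (a), the paper never passes to the Gamma representation: it integrates the Dirichlet density directly over a small box inside the simplex (off-support coordinates in $[0,\delta_1]$, support coordinates within $\delta_0$ of $\lambda_j^\ast$), using $\{x\Gamma(x)\}^{-1}=1-cx+O(x^2)$; there each off-support coordinate contributes the factor $\delta_1^{\rho}$ exactly (the integral $\rho^{-1}\delta_1^{\rho}$ paired with $1/\Gamma(\rho)\approx\rho$ from the normalizing constant), so the ``spurious $e^{-(M-s)}$'' you had to fight never arises --- it is an artifact of the $e^{-T_j}$ factors in your Gamma representation, and your repair via $P(T_j>\eta_0)\lesssim \rho\log(1/\eta_0)$ together with $1-x\geq e^{-2x}$ is a valid substitute. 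One small loose end: $1-x\geq e^{-2x}$ requires $\rho\log(8M/\epsilon)$ bounded by a constant; in the complementary regime one has $\log(M/\epsilon)\gtrsim M^{\gamma}$, where the crude per-coordinate bound $P(T_j\leq\eta_0)\geq e^{-1}\eta_0^{\rho}$ already gives an admissible exponent, so this is cosmetic. For (c), the paper makes the same observation you do --- that paying the full orthant factor $2^{-M}$ is fatal --- but fixes it by redoing the density computation, each coordinate's two-sided integral contributing an extra factor of $2$ that cancels $2^{-M}$; your alternative of conditioning only on the $s$ signs on the support of $\eta^\ast$, at cost $2^{-s}$ absorbed into the constant (legitimate because signs and magnitudes are independent, exactly the (DDG1) representation), is arguably cleaner and equally valid.
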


\begin{itemize}
  \item The lower bound $\exp\{-C\gamma s\log (M/\epsilon)\}$ in Lemma \ref{le:conprob} can be decomposed into two parts: $\exp\{-C\gamma s\log M\}$ and $\exp\{Cs\log \epsilon\}$. The first part has the same order as $1/{M \choose s}$, one over the total number of ways to choose $s$ indices from $\{1,\ldots,M\}$. The second part is of the same order as $\epsilon^{s}$, the volume of an $\epsilon$-cube in $\bbR^s$. Since usually which $s$ components are nonzero and where the vector composed of these $s$ nonzero components locates in $\bbR^s$ are unknown, this prior lower bound cannot be improved.
  \item  The priors (DA) and (DDG2) do not depend on the sparsity level $s$. As a result, Lemma \ref{le:conprob} suggests that the prior concentration properties hold simultaneously for all $\lambda^\ast $ or $\eta^\ast $ with different $s$ and thus these priors can adapt to an unknown sparsity level.
\end{itemize}

By the first two parts of Lemma \ref{le:conprob}, the following is satisfied for the prior (DA) with $D$ large enough,
\begin{align*}
 &\text{(DA-PC)} & \Pi(d_{\Sigma}(\lambda,\lambda^\ast )\leq \epsilon_n)\geq e^{-n\epsilon_n^2C},\ \text{with } \epsilon_n=\left\{
              \begin{array}{cl}
                D\sqrt{\frac{s\log (M/s)}{n}}, & \text{ if }||\lambda^\ast ||_0=s; \\
                D\sqrt{\frac{M}{n}}, & \text{ if }M\leq \sqrt{n};\\
                D\sqrt[4]{\frac{\log(M/\sqrt{n}+1)}{n}}, & \text{ if }M>\sqrt{n}.
              \end{array}
            \right. &&
\end{align*}
This prior concentration property will play a key role in characterizing the posterior convergence rate of the prior (DA) for Bayesian aggregation.

Based on the prior concentration property of the double Dirichlet distribution
provided in Lemma \ref{le:conprob} part c and part d, we have the corresponding property for the prior (DDG2) by taking into account the prior distribution of $A=||\lambda||_1$.

\begin{corollary}\label{coro:pclr}
Assume (B1).
\begin{enumerate}
\item[a.] Assume (B2a). Under the prior (DDG2), for any $\gamma\geq1$,
  \begin{align*}
    \Pi(d_F(\lambda,\lambda^\ast )\leq \epsilon)\geq \exp\bigg\{-C\gamma s\log\frac{M}{\epsilon} \bigg\},\quad\text{for some } C>0.
  \end{align*}
\item[b.] Assume (B2b). Under the prior (DDG2), for any $m>0$, any $\eta\in D_{M-1}$ and any $\gamma\geq1$,
  \begin{align*}
  &\Pi\bigg(d_F(\lambda,\lambda^\ast )\leq \epsilon+\frac{C}{\sqrt{m}}\bigg)\geq \exp\bigg\{-C\gamma m\log\frac{M}{\epsilon}  \bigg\},\\
    &\Pi(d_F(\lambda,\lambda^\ast )\leq \epsilon)\geq \exp\bigg\{-C\gamma M\log\frac{M}{\epsilon}  \bigg\},\quad\text{for some } C>0.
  \end{align*}
\end{enumerate}
\end{corollary}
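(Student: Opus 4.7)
The plan is to reduce both parts to Lemma \ref{le:conprob}(c)--(d) via the reparametrization $\lambda = A\eta$ built into (DDG2), under which $A \sim \text{Ga}(a_0, b_0)$ and $\eta \sim \text{DD}(\alpha/M^{\gamma},\ldots,\alpha/M^{\gamma})$ are independent. Write $\lambda^{*}=A^{*}\eta^{*}$ with $A^{*}=||\lambda^{*}||_{1}$ and $\eta^{*}=\lambda^{*}/A^{*}$ (treating $\lambda^{*}=0$ separately). The identity $\lambda-\lambda^{*}=(A-A^{*})\eta+A^{*}(\eta-\eta^{*})$ combined with the triangle inequality for $d_F$ and the column bound $d_F(\eta,0)=||n^{-1/2}F\eta||_{2}\leq\kappa\,||\eta||_{1}=\kappa$ (from (B1) and $||\eta||_{1}=1$) yields
\[
d_F(\lambda,\lambda^{*}) \;\leq\; \kappa\,|A-A^{*}| \;+\; A^{*}\, d_F(\eta,\eta^{*}).
\]

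For part (a) I would intersect the events $\{\kappa|A-A^{*}|\leq\epsilon/2\}$ and $\{d_F(\eta,\eta^{*})\leq\epsilon/(2A^{*})\}$. The first has prior mass $\gtrsim c\,\epsilon$ by continuity and positivity of the Ga$(a_0,b_0)$ density at the fixed point $A^{*}>0$, and, since $\eta^{*}=\lambda^{*}/A^{*}$ inherits the same sparsity level $s$ as $\lambda^{*}$, the second has mass $\gtrsim \exp\{-C\gamma s\log(2MA^{*}/\epsilon)\}$ by Lemma \ref{le:conprob}(c). Multiplying by independence of $A$ and $\eta$ under (DDG2) and absorbing the $\log A^{*}$, $\log\kappa$, and $\log\epsilon$ terms into the leading $s\log(M/\epsilon)$ factor with a slightly larger $C$ produces the claimed bound. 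Part (b) proceeds identically but invokes Lemma \ref{le:conprob}(d) in place of (c): for the first inequality I split $A^{*}d_F(\eta,\eta^{*})\leq\epsilon/2+C/\sqrt{m}$ so that under (B2b) it suffices to have $d_F(\eta,\eta^{*})\leq\epsilon/(2A_{0})+C/(A_{0}\sqrt{m})$; for the second inequality I drop the $C/\sqrt{m}$ slack and invoke the second display of Lemma \ref{le:conprob}(d), yielding the $\exp\{-C\gamma M\log(M/\epsilon)\}$ bound.

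The only real obstacle is bookkeeping: each appearance of $A^{*}$, $A_{0}$ or $\kappa$ contributes an additive $O(1)$ or $\log A^{*}$ term that has to be swallowed by the leading logarithmic cost, which is routine in the asymptotic regime where $\log(M/\epsilon)$ dominates. The edge case $\lambda^{*}=0$ is handled outside the decomposition, using $d_F(\lambda,0)\leq \kappa A$ together with the Gamma small-ball estimate $P(A\leq\epsilon/\kappa)\gtrsim(\epsilon/\kappa)^{a_{0}}$. Beyond these, no new probabilistic input is needed: all nontrivial content is imported from Lemma \ref{le:conprob}, and the Gamma prefactor is a standard density estimate.
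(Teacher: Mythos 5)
Your proposal is correct and follows essentially the same route as the paper: the identical decomposition $d_F(\lambda,\lambda^\ast)\leq \kappa|A-A^\ast|+A^\ast d_F(\eta,\eta^\ast)$ via (B1) and $||\eta||_1=1$, independence of $A$ and $\eta$ under (DDG2), the Gamma small-ball estimate $\log\Pi(|A-A^\ast|\leq C\epsilon)\asymp\log\epsilon$, and then Lemma \ref{le:conprob} parts c and d. Your extra bookkeeping (absorbing $A^\ast$, $A_0$, $\kappa$ into the constant and the $\lambda^\ast=0$ edge case) is just a more explicit version of what the paper leaves implicit.
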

Based on the above corollary, we have a similar prior concentration property for the prior (DDG2):
\begin{align*}
 &\text{(DDG2-PC)} & \Pi(d_F(\theta,\theta^\ast )\leq c\epsilon_n)\geq e^{-n\epsilon_n^2C},\ \text{with the same $\epsilon_n$ in (DA-PC)}.&&
\end{align*}

\subsection{Supports of the Dirichlet distribution and the double Dirichlet distribution}
By \cite{Ghosal2000}, a second condition
to ensure the posterior convergence rate of $\theta^\ast \in\Theta$ at least $\epsilon_n$ is that the prior $\Pi$ should put almost all its mass in a sequence of subsets of $\Theta$ that are not too complex. More precisely, one needs to show that there exists a sieve sequence $\{\mathcal{F}_n\}$ such that $\theta^\ast \in \mathcal{F}_n\subset\Theta$, $\Pi(\mathcal{F}_n^c)\leq e^{-n\epsilon_n^2C}$ and $\log N(\epsilon_n,\mathcal{F}_n,d_n)\leq n\epsilon^2_n$ for each $n$, where for a metric space $\mathcal{F}$ associated with a (semi-)metric $d$, $N(\epsilon,\mathcal{F},d)$ denotes the minimal number of $d$-balls with radii $\epsilon$ that are needed to cover $\mathcal{F}$.

For the priors (DA) and (DDG2), the probability of the space of all $s$-sparse vectors is zero. We consider the approximate $s$-sparse vector space $\mathcal{F}^{\Lambda}_{s,\epsilon}$ defined in Section \ref{se:cdd} for CA. For LA, we define $\mathcal{F}^{D}_{B,s,\epsilon}=\{\theta=A\eta:\eta\in D_{M-1}, \sum_{j=s+1}^{M}|\eta_{(j)}|\leq B^{-1}\epsilon; \ 0\leq A\leq B \}$,
where $|\eta_{(1)}|\geq\cdots\geq |\eta_{(M)}|$ is the ordered sequence of $\eta_1,\ldots,\eta_M$ according to their absolute values.

The following lemma characterizes the complexity of $\Lambda$, $D^B_{M-1}=\{A\eta:\eta\in D_{M-1};\ 0\leq A\leq B\}$, $\mathcal{F}^{\Lambda}_{s,\epsilon}$ and $\mathcal{F}^{D}_{B,s,\epsilon}$ in terms of their covering numbers.

\begin{lemma}\label{le:coen}
Assume (A1) and (B1).
\begin{enumerate}
  \item[a.] For any $\epsilon\in(0,1)$, integer $s>0$ and $B>0$, we have
\begin{align*}
  &\log N(\epsilon, \mathcal{F}^{\Lambda}_{s,\epsilon}, d_{\Sigma})\lesssim s\log\frac{M}{\epsilon},\\
  &\log N(\epsilon, \mathcal{F}^{D}_{B,s,\epsilon}, d_F)\lesssim s\log\frac{M}{\epsilon}+s\log B.
\end{align*}
  \item[b.] For any $\epsilon\in(0,1)$ and integer $m>0$, we have
\begin{align*}
    &\log N(C/\sqrt{m}, \Lambda, d_{\Sigma})\lesssim m\log M,\\
    &\log N(\epsilon, \Lambda, d_{\Sigma})\lesssim M\log\frac{M}{\epsilon},\\
    &\log N(CB/\sqrt{m}, BD_{M-1}, d_F)\lesssim m\log M,\\
    &\log N(B\epsilon, BD_{M-1}, d_F)\lesssim M\log\frac{M}{\epsilon}.
\end{align*}
\end{enumerate}
\end{lemma}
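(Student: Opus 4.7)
The plan is to reduce every covering estimate to one in the $\ell_1$-norm. Under (A1), expanding the quadratic form and bounding each off-diagonal entry via $|\Sigma_{ij}|\leq\sqrt{\Sigma_{ii}\Sigma_{jj}}\leq\kappa$ gives $d_\Sigma(\lambda,\lambda')\leq\sqrt{\kappa}\,\|\lambda-\lambda'\|_1$, and under (B1) the triangle inequality applied to the columns of $F$ gives $d_F(\theta,\theta')\leq\kappa\,\|\theta-\theta'\|_1$. Consequently, any $\ell_1$-$\epsilon$-net of any of the four sets is automatically a $d_\Sigma$- or $d_F$-net of radius $O(\epsilon)$, and I only need to count $\ell_1$-nets.

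For part~a, the defining inequality of $\mathcal{F}^{\Lambda}_{s,\epsilon}$ literally states that hard-thresholding $\lambda$ to its largest $s$ coordinates produces an $s$-sparse vector in $\Lambda$ at $\ell_1$-distance at most $\epsilon$. So I enumerate the $\binom{M}{s}$ possible supports, cover the resulting $s$-dimensional sub-simplex by an $\ell_1$-$\epsilon$-net of size $(C/\epsilon)^s$, and obtain a total count of at most $(CM/\epsilon)^s$, giving $\log N\lesssim s\log(M/\epsilon)$. The set $\mathcal{F}^D_{B,s,\epsilon}$ is handled identically after noting that its defining conditions force every element to be within $\ell_1$-distance $\epsilon$ of an $s$-sparse vector of $\ell_1$-norm at most $B$; each of the $\binom{M}{s}$ $s$-supported $\ell_1$-balls of radius $B$ then requires $(CB/\epsilon)^s$ covering points, yielding the claimed $s\log(M/\epsilon)+s\log B$ bound. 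The two $\epsilon$-radius bounds of part~b follow immediately from the standard volumetric estimate that the unit $\ell_1$-ball of $\bbR^M$ admits an $\ell_1$-$\epsilon$-net of cardinality $(C/\epsilon)^M$, combined with the metric reduction and a rescale by $B$ where needed.

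The main obstacle is the sharper $C/\sqrt{m}$-radius bound, where volumetric reasoning is hopelessly crude (a pure $\ell_1$-$m^{-1/2}$-net of the simplex in $\bbR^M$ would cost $M\log m$, not $m\log M$). I will use Maurey's empirical-averaging lemma. For $\Lambda$, regard $\lambda$ as a probability vector over $\{e_1,\ldots,e_M\}$, draw $J_1,\ldots,J_m$ iid from $\lambda$, and form $\tilde\lambda=m^{-1}\sum_{k} e_{J_k}$; a one-line variance computation using $\|\Sigma^{1/2}e_j\|_2^2=\Sigma_{jj}\leq\kappa$ gives
\[
  E\,d_\Sigma(\lambda,\tilde\lambda)^2 \;=\; \frac{1}{m}\,\mathrm{Var}_{J\sim\lambda}\!\bigl(\Sigma^{1/2}e_J\bigr)\;\leq\;\frac{\kappa}{m},
\]
so some realization of $\tilde\lambda$ achieves $d_\Sigma(\lambda,\tilde\lambda)\leq\sqrt{\kappa/m}$. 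All such $\tilde\lambda$ live in the finite set of empirical frequency vectors with denominator $m$, whose cardinality $\binom{M+m-1}{m}\leq(e(M+m)/m)^m$ has log $\lesssim m\log M$ in the relevant regime $m\lesssim M$. The bound for $BD_{M-1}$ is analogous after writing the $\ell_1$-ball of radius $B$ as the convex hull of $\{0,\pm Be_j:j\leq M\}$ and drawing $m$ iid atoms from the induced mixture; the same second-moment computation, this time using (B1) in the form $E\|Fv\|_2^2\leq nB^2\kappa^2$ for a random extreme point $v$, produces a sparse approximant within $B\kappa/\sqrt{m}$ in $d_F$, and the number of empirical distributions on the $2M+1$ atoms is $\binom{2M+m}{m}\lesssim M^m$. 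The delicate point is just making sure the second-moment bound is taken in the correct $d_F$-geometry rather than in $\ell_2$; once that is set up, counting compositions finishes the proof.
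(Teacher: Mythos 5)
Your proposal is correct and follows essentially the same route as the paper: reduce $d_\Sigma$ and $d_F$ to $\ell_1$ via (A1)/(B1), cover the approximately sparse sets by enumerating supports and gridding, and obtain the sharp $C/\sqrt{m}$ bounds by probabilistic (Maurey-type) sparsification followed by counting empirical frequency vectors with denominator $m$ — the sparsification step is exactly the paper's Lemma \ref{le:prep2}, proved there by the same multinomial sampling argument. The only cosmetic difference is that you absorb the radius $A\le B$ into the convex hull by adding a zero atom, whereas the paper discretizes $A$ separately; both yield the stated bounds.
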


The next lemma provides upper bounds to the complementary prior probabilities of $\mathcal{F}^{\Lambda}_{s,\epsilon}$ and $\mathcal{F}^{D}_{B,s,\epsilon}$. The proof utilizes the connection between the Dirichlet distribution and the stick-breaking representation of the Dirichlet processes \citep{sethuraman1994}.

\begin{lemma}\label{le:cpp}
\begin{enumerate}
  \item[a.] For any $\epsilon\in(0,1)$, under the prior (DA) with $\gamma>1$, we have
\begin{align*}
    \Pi(\lambda\notin \mathcal{F}^{\Lambda}_{s,\epsilon})\lesssim \exp\bigg\{-Cs(\gamma-1)\log \frac{M}{\epsilon}\bigg\}.
\end{align*}
  \item[b.] For any $\epsilon\in(0,1)$, under the prior (DDG2) with $\gamma>1$, we have
\begin{align*}
    \Pi(\theta\notin \mathcal{F}^{D}_{B,s,\epsilon})\lesssim
     \exp\bigg\{-Cs(\gamma-1)\log \frac{M}{\epsilon}-Cs\log B\bigg\}+\exp\{-CB\}.
\end{align*}
\end{enumerate}
\end{lemma}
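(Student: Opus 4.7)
The plan is to exploit the stick-breaking representation flagged in the discussion after Theorem~\ref{thm:dd}: $\mathrm{Diri}(\rho,\ldots,\rho)$ is the law of $(G(I_1),\ldots,G(I_M))$ with $I_j=[(j-1)/M,j/M)$ and $G\sim\mathrm{DP}((M\rho)U)$. Writing $G=\sum_{k\ge1}p_k\delta_{\xi_k}$ with Sethuraman weights $p_k=V_k\prod_{l<k}(1-V_l)$, $V_k\stackrel{\mathrm{iid}}{\sim}\mathrm{Beta}(1,M\rho)$, and locations $\xi_k\stackrel{\mathrm{iid}}{\sim}\mathrm{Unif}[0,1]$, I will reduce both parts of the lemma to a single Gamma-tail estimate for $-\log\prod_{l=1}^s(1-V_l)$.

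\noindent\textbf{Part (a).} The central deterministic inequality is
\[
\sum_{j=s+1}^M\lambda_{(j)}\;\le\;\prod_{l=1}^s(1-V_l).
\]
To see this, note that the first $s$ atoms $\xi_1,\ldots,\xi_s$ hit at most $s$ of the cells $I_j$; padding the (random) index set to size exactly $s$ yields $S'\subset\{1,\ldots,M\}$, and by the telescoping identity $\sum_{k=1}^s p_k=1-\prod_{l=1}^s(1-V_l)$ we obtain $\sum_{j\in S'}\lambda_j\ge 1-\prod_{l=1}^s(1-V_l)$. Since $\sum_{j=s+1}^M\lambda_{(j)}$ is the minimum of $\sum_{j\in T}\lambda_j$ over $(M-s)$-subsets $T$, the choice $T=[M]\setminus S'$ delivers the bound. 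Next, $1-V_l\sim\mathrm{Beta}(M\rho,1)$ satisfies $P(1-V_l>t)=t^{M\rho}$, so $-\log(1-V_l)\sim\mathrm{Exp}(M\rho)$ and $-\log\prod_{l=1}^s(1-V_l)\sim\mathrm{Gamma}(s,M\rho)$. Applying the standard estimate $P(\mathrm{Gamma}(s,\beta)<x)\le(\beta x)^s/s!$ together with $s!\ge(s/e)^s$ and $M\rho=\alpha M^{-(\gamma-1)}$ yields
\[
\Pi\bigl(\lambda\notin\mathcal{F}^{\Lambda}_{s,\epsilon}\bigr)\;\le\;\bigl(e\alpha\log(1/\epsilon)/s\bigr)^s M^{-s(\gamma-1)},
\]
and taking logarithms, with the sub-dominant $\log\log(1/\epsilon)$ and $\log s$ contributions absorbed into the constant $C$, produces the stated $\exp\{-Cs(\gamma-1)\log(M/\epsilon)\}$.

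\noindent\textbf{Part (b).} A union bound gives $\{\theta\notin\mathcal{F}^{D}_{B,s,\epsilon}\}\subset\{A>B\}\cup\{\sum_{j=s+1}^M|\eta_{(j)}|>\epsilon/B\}$. The scale piece contributes $\Pi(A>B)\lesssim e^{-CB}$ by the standard Gamma tail for $A\sim\mathrm{Ga}(a_0,b_0)$. For the direction piece, the representation~\eqref{eq:dD} with $T_j\stackrel{\mathrm{iid}}{\sim}\mathrm{DG}(\rho,1)$ forces $|T_j|\stackrel{\mathrm{iid}}{\sim}\mathrm{Ga}(\rho,1)$, and hence $(|\eta_1|,\ldots,|\eta_M|)\sim\mathrm{Diri}(\rho,\ldots,\rho)$. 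Applying part~(a) with $\epsilon$ replaced by $\epsilon/B$ then gives a bound of the form $\exp\{-Cs(\gamma-1)\log(MB/\epsilon)\}$; splitting $\log(MB/\epsilon)=\log(M/\epsilon)+\log B$ and using $\gamma>1$ to relabel constants yields the advertised pair of exponentials.

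\noindent\textbf{Main obstacle.} The conceptual heart is the deterministic inequality in part (a). A naive argument would require the first $s$ atoms to land in distinct cells and then handle collisions separately, but the point is that collisions only pack the first $s$ stick-breaking masses into a smaller subset of cells, never inflating the tail $\sum_{j>s}\lambda_{(j)}$. Once this geometric fact is isolated, both halves of the lemma collapse to routine tail computations for a Gamma random variable (and, in (b), for the prior on $A$).
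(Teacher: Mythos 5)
Your proposal follows essentially the same route as the paper's own proof: the same stick-breaking representation of $\mathrm{Diri}(\rho,\ldots,\rho)$ through $\mathrm{DP}((M\rho)U)$, the same key deterministic inequality $\sum_{j=s+1}^M\lambda_{(j)}\le\prod_{l=1}^s(1-V_l)$, and for part (b) the same reduction via $(|\eta_1|,\ldots,|\eta_M|)\sim\mathrm{Diri}(\rho,\ldots,\rho)$ combined with the Gamma tail bound for $A$. The only divergence is the final tail computation---you use the exact $\mathrm{Gamma}(s,M\rho)$ law of $-\log\prod_{l=1}^s(1-V_l)$ where the paper applies Markov's inequality with $E v_k^s$---a minor variation that is in fact slightly sharper (a $(\log(1/\epsilon))^s$ factor in place of $\epsilon^{-s}$), and your concluding absorption of lower-order terms into $C$ is no looser than the corresponding step in the paper.
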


\subsection{Test construction}
For CA, we use the notation $P_{\lambda,Q}$ to denote the joint distribution of $(X,Y)$, whenever $X\sim Q$ and $Y|X\sim N(\sum_{j=1}^M\lambda_jf_j(X),\sigma^2)$ for any $\lambda\in \Lambda$ and $E_{\lambda,Q}$ the expectation with respect to $P_{\lambda,Q}$. Use $P_{\lambda,Q}^{(n)}$ to denote the $n$-fold convolution of $P_{\lambda,Q}$. Let $X^n=(X_1,\ldots,X_n)$ and $Y^n=(Y_1,\ldots,X_n)$ be $n$ copies of $X$ and $Y$.  Recall that $f_0$ is the true regression function that generates the data. We use $P_{0,Q}$ to denote the corresponding true distribution of $Y$. For LA, we use $P_{\lambda}$ to denote the distribution of $Y$, whenever $Y\sim N(F\lambda,\sigma^2I_n)$ and $E_{\lambda}$ the expectation with respect to $P_{\lambda}$.

For both aggregation problems, we use the ``M-open" view where $f_0$ might not necessarily belong to $\mathcal{F}^{\Lambda}$ or $\mathcal{F}^{\bbR^M}$. We apply the result in \cite{Kleijn2006} to construct a test under misspecification for CA with random design and explicitly construct a test under misspecification for LA with fixed design. Note that the results in \cite{Kleijn2006} only apply for random-designed models. For LA with fixed design, we construct a test based on concentration inequalities for Gaussian random variables.

\begin{lemma}\label{le:test}
Assume (A3).
\begin{enumerate}
  \item[a.] Assume that $f^\ast =\sum_{j=1}^M\lambda_j^\ast f_j$ satisfies $E_Q(f-f^\ast )(f^\ast -f_0)=0$ for every $f\in\mathcal{F}^{\Lambda}$. Then there exist $C>0$ and a measurable function $\phi_n$ of $X^n$ and $Y^n$, such that for any other vector $\lambda_2\in \Lambda$,
      \begin{align*}
    &P_{0,Q}^{(n)}\phi_n(X^n,Y^n)\leq \ \exp\big\{-Cnd^2_{\Sigma}(\lambda_2,\lambda^\ast )\big\}\\
    \sup_{\lambda\in \Lambda:\ d_{\Sigma}(\lambda,\lambda_2)<\frac{1}{4}d_F(\lambda^\ast ,\lambda_2)}
    &P_{\lambda,Q}^{(n)}(1-\phi_n(X^n,Y^n))\leq \ \exp\big\{-Cnd^2_{\Sigma}(\lambda_2,\lambda^\ast )\big\}.
\end{align*}
  \item[b.] Assume that $\lambda^\ast\in\bbR^d$ satisfies $F^T(F\lambda^\ast-F_0)=0$ for every $\lambda\in\bbR^d$. Then there exists a measurable function $\phi_n$ of $Y$ and some $C>0$, such that for any other $\lambda_2\in\bbR^d$,
  \begin{align*}
    &P_{0}\phi_n(Y)\leq \ \exp\big\{-Cnd^2_F(\lambda_2,\lambda^\ast)\big\}\\
    \sup_{\lambda\in \bbR^M:\ d_F(\lambda,\lambda_2)<\frac{1}{4}d_F(\lambda^\ast,\lambda_2)}
    &P_{\lambda}(1-\phi_n(Y))\leq \ \exp\big\{-Cnd^2_X(\lambda_2,\lambda^\ast)\big\}.
\end{align*}
\end{enumerate}
\end{lemma}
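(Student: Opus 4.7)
The two parts call for different treatments: in (b) the fixed-design Gaussian likelihood produces a linear test statistic whose law is explicit, so one can build the test by hand and control errors through a standard Gaussian tail bound. In (a) the random design makes the variance of any analogous statistic itself random, so I would instead invoke the general misspecified-model testing machinery of \cite{Kleijn2006} and translate its output from Hellinger distance to $d_\Sigma$ via the boundedness assumption (A3).

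\textbf{Part (b), fixed-design LA.} My plan is to take the linear statistic
\[
T_n \;=\; (F\lambda_2 - F\lambda^\ast)^T(Y - F\lambda^\ast),
\qquad
\phi_n \;=\; \mathbf{1}\!\left\{T_n > \tfrac{1}{2}\|F\lambda_2 - F\lambda^\ast\|_2^2\right\}.
\]
Under $P_0$, decomposing $Y - F\lambda^\ast = (F_0 - F\lambda^\ast) + \epsilon$ and using the projection identity $F^T(F\lambda^\ast - F_0)=0$ kills the deterministic part, leaving $T_n \sim N(0,\sigma^2\|F\lambda_2-F\lambda^\ast\|_2^2)$; the one-sided Gaussian tail then gives a Type~I error of order $\exp\{-c n d_F^2(\lambda_2,\lambda^\ast)\}$. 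Under $P_\lambda$ the statistic is $N\!\bigl((F\lambda_2-F\lambda^\ast)^T F(\lambda-\lambda^\ast),\,\sigma^2\|F\lambda_2-F\lambda^\ast\|_2^2\bigr)$; writing $\lambda-\lambda^\ast = (\lambda_2-\lambda^\ast) - (\lambda_2-\lambda)$ and applying Cauchy--Schwarz with the hypothesis $d_F(\lambda,\lambda_2) < \tfrac14 d_F(\lambda^\ast,\lambda_2)$ forces the mean to be at least $\tfrac{3}{4}\|F\lambda_2-F\lambda^\ast\|_2^2$, a margin of $\tfrac{n}{4} d_F^2$ above the threshold, so a second Gaussian tail bound controls the Type~II error uniformly over the ball.

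\textbf{Part (a), random-design CA.} Here I would apply the convex-model misspecification test of \cite{Kleijn2006}. The family $\{P_{\lambda,Q}:\lambda\in\Lambda\}$ is convex because $\lambda\mapsto f_\lambda$ is affine and $\Lambda$ is convex, and the assumed identity $E_Q(f-f^\ast)(f^\ast-f_0)=0$ for $f\in\mathcal{F}^\Lambda$ is exactly the first-order optimality condition characterising $P_{\lambda^\ast,Q}$ as the Kullback--Leibler projection of $P_{0,Q}$ onto that convex class; this is the hypothesis under which their theorem delivers, for every alternative $\lambda_2$, a test whose Type~I and Type~II errors decay like $\exp\{-c n h^2(P_{\lambda_2,Q},P_{\lambda^\ast,Q})\}$, with local uniformity on a Hellinger ball of radius a fixed fraction of $h(P_{\lambda_2,Q},P_{\lambda^\ast,Q})$. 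Assumption (A3) makes every $f_\lambda$ uniformly bounded on $\mathcal{X}$, and for Gaussian regression with bounded means the Hellinger distance is equivalent to the $L_2(Q)$ distance between the means (the fact from \cite{Birge2004} cited right after (A3)), so $h(P_{\lambda,Q},P_{\lambda',Q})\asymp \|f_\lambda-f_{\lambda'}\|_Q = d_\Sigma(\lambda,\lambda')$. Converting then yields exponent $nd_\Sigma^2(\lambda_2,\lambda^\ast)$ and turns the Hellinger ball into a $d_\Sigma$-ball of the required size.

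\textbf{Main obstacle.} Part (b) is routine Gaussian bookkeeping once the right linear statistic is chosen. The genuine work is in part (a): one has to verify that the hypotheses of the Kleijn--van der Vaart construction really are met (convexity of the model class and the first-order KL-projection condition), and, more subtly, that the Hellinger/$L_2(Q)$ equivalence holds uniformly on the relevant neighbourhood around $\lambda_2$. The uniform boundedness in (A3) is exactly what is needed for the latter: without it the Hellinger distance between two Gaussian regressions with very different means saturates, and one would not get a genuinely quadratic exponent $n d_\Sigma^2$ for distant alternatives.
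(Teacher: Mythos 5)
Your proposal is correct and follows essentially the same route as the paper: part (a) is handled, as in the paper, by invoking the misspecified-model tests of Kleijn and van der Vaart together with the equivalence $h(P_{\lambda,Q},P_{\lambda',Q})\asymp\|f_\lambda-f_{\lambda'}\|_Q=d_\Sigma(\lambda,\lambda')$ guaranteed by (A3). For part (b), your threshold test on the linear statistic $(F\lambda_2-F\lambda^\ast)^T(Y-F\lambda^\ast)$ is algebraically identical to the paper's test $\mathbf{1}\{\psi(\lambda^\ast,Y)-\psi(\lambda_2,Y)\geq 0\}$ with $\psi(\lambda,Y)=\tfrac{1}{2\sigma^2}\|Y-F\lambda\|_2^2$, and your Gaussian tail bounds (using $F^T\zeta=0$ and Cauchy--Schwarz with the $\tfrac14$-ball condition) reproduce the paper's Chernoff-type computation.
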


\begin{itemize}
  \item As we discussed in the remark in section \ref{se:pcrba}, in order to apply \cite{Kleijn2006} for Gaussian regression with random design, we need the mean function to be uniformly bounded. For the convex aggregation space $\mathcal{F}^{\Lambda}$, this uniformly bounded condition is implied by (A3). For the linear regression with fixed design, we do not need the uniformly bounded condition. This property ensures that the type I and type II errors in b do not deteriorate as $||\lambda_2||_1$ grows, which plays a critical role in showing that the posterior probability of $\{A>C A^\ast \}$ converges to zero in probability for $C$ sufficiently large, where $A=||\lambda||_1$ and $A^\ast =||\lambda^\ast ||_1$. Similarly, if we consider CA with a fixed design, then only an assumption like (B1) on the design points is needed.
  \item The assumption on $f^\ast $ in CA is equivalent to that $f^\ast $ is the minimizer over $f\in\mathcal{F}^{\Lambda}$ of $||f-f_0||^2_Q$, which is proportional to the expectation of the KL divergence between two normal distributions with mean functions $f_0(X)$ and $f(X)$ with $X\sim Q$. Therefore, $f^\ast $ is the best $L_2(Q)$-approximation of $f_0$ within the aggregation space $\mathcal{F}^{\Lambda}$ and the lemma suggests that the likelihood function under $f^\ast $ tends to be exponentially larger than other functions in $\mathcal{F}^{\Lambda}$. Similarly, the condition on $\lambda^\ast$ in LA is equivalent to that $\lambda^\ast$ is the minimizer over $\lambda\in\bbR^d$ of $||F\lambda-F_0||_2^2$, which is proportional to the KL divergence between two multivariate normal distributions with mean vectors $F\lambda$ and $F_0$.
\end{itemize}

\subsection{Proof of Theorem \ref{thm:BA}}
The proof follows similar steps as the proof of Theorem 2.1 in \cite{Ghosal2000}. The difference is that we consider the misspecified framework where the asymptotic limit of the posterior distribution of $f$ is $f^\ast $ instead of the true underlying regression function $f_0$. As a result, we need to apply the test condition in Lemma \ref{le:test} part a in the model misspecified framework. We provide a sketched proof as follows.

Let $\epsilon_n$ be given by (DA-PC) and $\Pi^B(\lambda)=\Pi\big(\lambda|B(\lambda^\ast ,\epsilon_n)\big)$ with $B(\lambda^\ast ,\epsilon_n)$ defined in (PC1).
By Jensen's inequality applied to the logarithm,
\begin{align*}
    \log\int_{B(\lambda^\ast ,\epsilon_n)}\prod_{i=1}^n\frac{P_{\lambda,Q}}{P_{0,Q}}(X_i,Y_i)d\Pi^{B}(\lambda)
    \geq \sum_{i=1}^n\int_{B(\lambda^\ast ,\epsilon_n)}\log\frac{P_{\lambda,Q}}{P_{0,Q}}(X_i,Y_i)d\Pi^{B}(\lambda).
\end{align*}
By the definition of $B(\lambda^\ast ,\epsilon_n)$ and an application of Chebyshev's inequality, we have that for any $C>0$,
\begin{align*}
    &P_0\bigg\{\sum_{i=1}^n\int_{B(\lambda^\ast ,\epsilon_n)}\bigg(\log\frac{P_{\lambda,Q}}{P_{0,Q}}(X_i,Y_i)
    +K(P_{\lambda_0,Q}, P_{\lambda,Q})\bigg)d\Pi^{B}(\lambda)\\
    &\qquad\qquad\qquad\qquad\qquad\leq -(1+C)n\epsilon_n^2+n\int_{B(\lambda^\ast ,\epsilon_n)}K(P_{\lambda_0,Q}, P_{\lambda,Q})d\Pi^B(\lambda)\bigg\}\\
    \leq\
    &P_0\bigg\{\sum_{i=1}^n\int_{B(\lambda^\ast ,\epsilon_n)}\bigg(\log\frac{P_{\lambda,Q}}{P_{0,Q}}(X_i,Y_i)
    +K(P_{\lambda_0,Q}, P_{\lambda,Q})\bigg)d\Pi^{B}(\lambda)\leq -Cn\epsilon_n^2\bigg\}\\
    \leq&\ \frac{n \int_{B(\lambda^\ast ,\epsilon_n)}V(P_{\lambda_0,Q}, P_{\lambda,Q})d\Pi^B(\lambda)}{(Cn\epsilon_n^2)^2}\leq \frac{1}{C^2n\epsilon_n^2}\to 0, \text{ as }n\to \infty.
\end{align*}
Combining the above two yields that on some set $A_n$ with $P_0$-probability converging to one,
\begin{align}\label{eq:icon}
    \int_{B(\lambda^\ast ,\epsilon_n)}\prod_{i=1}^n\frac{P_{\lambda,Q}}{P_{0,Q}}(X_i,Y_i)d\Pi(\lambda)
    \geq \exp(-(1+C)n\epsilon_n^2)\Pi(B(\lambda^\ast ,\epsilon_n))\geq \exp(-C_0n\epsilon_n^2),
\end{align}
for some $C_0>0$, where we have used the fact that $\Pi(B(\lambda^\ast ,\epsilon_n))\geq \Pi(d_{\Sigma}(\lambda,\lambda^\ast )\leq C\epsilon_n)\geq \exp(-Cn\epsilon_n^2)$ for some $C>0$.

Let $\mathcal{F}_n=\mathcal{F}_{as,\epsilon_n}^\lambda$ for some $a>0$ if (A2) holds and otherwise $\mathcal{F}_n=\Lambda$. Then by Lemma \ref{le:coen} part a and Lemma \ref{le:cpp} part a, for some constants $C_1>0$ and $C_2>0$,
\begin{align}\label{eq:sievec}
    \log N(\epsilon_n,\mathcal{F}_n,d_{\Sigma})\leq C_1n\epsilon_n^2,\quad \Pi(\lambda\notin \mathcal{F}_n)\leq\exp(-C_2n\epsilon_n^2).
\end{align}
Because $C_2$ is increasing with the $a$ in the definition of $\mathcal{F}_n$, we can assume $C_2>C_0+1$ by properly selecting an $a$.

For some $D_0>0$ sufficiently large, let $\lambda_1^\ast ,\ldots,\lambda_J^\ast \in \mathcal{F}_n-\{\lambda:d_{\Sigma}(\lambda,\lambda^\ast )\leq 4D_0\epsilon_n\}$ with $|J|\leq \exp(C_1n\epsilon_n^2)$ be $J$ points that form an $D_0\epsilon_n$-covering net of $\mathcal{F}_n-\{\lambda:d_{\Sigma}(\lambda,\lambda^\ast )\leq 4D_0\epsilon_n\}$. Let $\phi_{j,n}$ be the corresponding test function provided by Lemma \ref{le:test} part a with $\lambda_2=\lambda_j^\ast $ for $j=1,\ldots,J$. Set $\phi_n=\max_{j}\phi_{j,n}$. Since $d_{\Sigma}(\lambda_j^\ast ,\lambda^\ast )\geq 4D_0\epsilon_n$ for any $j$, we obtain
\begin{align}\label{eq:pcrtest1}
    P_{0,Q}^{(n)}\phi_n\leq \sum_{j=1}^JP_{0,Q}^{(n)}\phi_{j,n}\leq |J|\exp(-C16D_0^2n\epsilon_n^2)\leq\exp(-C_3n\epsilon_n^2),
\end{align}
where $C_3=16CD_0^2-1>0$ for $D_0$ large enough.
For any $\lambda\in\mathcal{F}_n-\{\lambda:d_{\Sigma}(\lambda,\lambda^\ast )\leq 4 D_0\epsilon_n\}$, by the design, there
exists a $j_0$ such that $d_{\Sigma}(\lambda_{j_0}^\ast ,\lambda)\leq D_0\epsilon_n$. This implies that $d_{\Sigma}(\lambda_{j_0}^\ast ,\lambda^\ast )\geq 4D_0\epsilon_n\geq 4d_{\Sigma}(\lambda_{j_0}^\ast ,\lambda)$, therefore
\begin{align}
  & \sup_{\lambda\in \mathcal{F}_n:\ d_{\Sigma}(\lambda,\lambda^\ast )\geq 4D_0\epsilon_n} P_{\lambda,Q}^{(n)}\phi_n\nonumber\\
  \leq &\ \min_{j}\sup_{\lambda\in \Lambda:\ d_{\Sigma}(\lambda,\lambda_{j}^\ast )<\frac{1}{4}d_{\Sigma}(\lambda^\ast ,\lambda_{j}^\ast )}
    P_{\lambda,Q}^{(n)}(1-\phi_n)\leq \ \exp\big\{-C_4n\epsilon_n^2\big\},\label{eq:pcrtest2}
\end{align}
with $C_4=16CD_0^2>C_0+1$ with $D_0$ sufficiently large. With $D=4D_0$, we have
\begin{align}
   & E_{0,Q}\Pi(d_{\Sigma}(\lambda,\lambda^\ast )
    \geq D\epsilon_n|X_1,Y_1,\ldots,X_n,Y_n)I(A_n)\nonumber\\
    \leq &\ P_{0,Q}^{(n)}\phi_n+E_{0,Q}\Pi(d_{\Sigma}(\lambda,\lambda^\ast )\geq D\epsilon_n|X_1,Y_1,\ldots,X_n,Y_n)I(A_n)(1-\phi_n).\label{eq:pcrkey1}
\end{align}
By \eqref{eq:icon}, \eqref{eq:sievec} and \eqref{eq:pcrtest2}, we have
\begin{align}
    &E_{0,Q}\Pi(d_{\Sigma}(\lambda,\lambda^\ast )\geq D\epsilon_n|X_1,Y_1,\ldots,X_n,Y_n)I(A_n)(1-\phi_n)\nonumber\\
    \leq &\ P_{0,Q}^{(n)}(1-\phi_n)I(A_n)\frac{\int_{\lambda\in\mathcal{F}_n:d_{\Sigma}(\lambda,\lambda^\ast )\geq D\epsilon_n}\prod_{i=1}^n\frac{P_{\lambda,Q}}{P_{0,Q}}(X_i,Y_i)d\Pi(\lambda)}
    {\int_{B(\lambda^\ast ,\epsilon_n)}\prod_{i=1}^n\frac{P_{\lambda,Q}}{P_{0,Q}}(X_i,Y_i)d\Pi(\lambda)}
    \nonumber\\
    &\qquad\qquad\qquad+P_{0,Q}^{(n)}I(A_n)\frac{\int_{\lambda\notin\mathcal{F}_n}\prod_{i=1}^n\frac{P_{\lambda,Q}}{P_{0,Q}}(X_i,Y_i)d\Pi(\lambda)}
    {\int_{B(\lambda^\ast ,\epsilon_n)}\prod_{i=1}^n\frac{P_{\lambda,Q}}{P_{0,Q}}(X_i,Y_i)d\Pi(\lambda)}\nonumber\\
    \leq &\ \exp(C_0n\epsilon_n^2)\sup_{\lambda\in \mathcal{F}_n:\ d_{\Sigma}(\lambda,\lambda^\ast )\geq 4D_0\epsilon_n} P_{\lambda,Q}^{(n)}\phi_n
    +\exp(C_0n\epsilon_n^2)\Pi(\lambda\notin\mathcal{F}_n)
    \leq \ 2\exp(-n\epsilon_n^2).\label{eq:pcrproof}
\end{align}
Combining the above with \eqref{eq:pcrtest1}, \eqref{eq:pcrkey1}, and the fact that $E_{0,Q}I(A_n^c)\to 0$ as $n\to\infty$, Theorem \ref{thm:BA} can be proved.

\subsection{Proof of Theorem \ref{thm:LR}}
For the sparse case where (B2a) is satisfied, we construct the sieve by $\mathcal{F}_n=\mathcal{F}^{D}_{bn\epsilon_n^2,as,\epsilon_n}$ with the $\epsilon_n$ given in (DDG2-PC), where $a>0$, $b>0$ are sufficiently large constants. Then by Lemma \ref{le:coen} part a and Lemma \ref{le:cpp} part b, we have
\begin{align}\label{eq:sievelr}
    \log N(\epsilon_n, \mathcal{F}_n,d_F)\leq C_1n\epsilon_n^2,\quad
    \Pi(\lambda\notin\mathcal{F}_n)\leq& \Pi(-C_2n\epsilon_n^2),
\end{align}
where $C_2$ is increasing with $a$ and $b$. The rest of the proof is similar
to the proof of Theorem \ref{thm:BA} with the help of \eqref{eq:sievelr}, Corollary \ref{le:conprob} part b and Lemma \ref{le:test} part b.

Next, we consider the dense case where (B2b) and (B3) are satisfied.
By the second half of Lemma \ref{le:coen} part b, the approximation accuracy of $BD_{M-1}$ degrades linearly in $B$. Therefore, in order to construct a sieve such that \eqref{eq:sievelr} is satisfied with the $\epsilon_n$ given in (DDG2-PC), we need to show that $E_0\Pi(A\leq KA^\ast |Y)\to 0$ as $n\to\infty$ with some constant $K>0$. Then by conditioning on the event $\{A\leq KA^\ast \}$, we can choose $\mathcal{F}_n=BD_{M-1}$ with $B=KA^\ast $, which does not increase with $n$, and \ref{eq:sievelr} will be satisfied. As long as \ref{eq:sievelr} is true, the rest of the proof will be similar to the sparse case.

We only prove that $E_0\Pi(A\leq KA^\ast |Y)\to 0$ as $n\to\infty$ here. By (B1) and (B3), for any $\eta\in D_{M-1}$ and $A>0$, $d_F(A\eta,A^\ast \eta^\ast )\geq \kappa_0A-\kappa A^\ast $. As a result, we can choose $K$ large enough so that $d_F(A\eta,A^\ast \eta^\ast )\geq 4$ for all $A\geq KA^\ast $ and all $\eta\in D_{M-1}$. Therefore, by Lemma \ref{le:test} part b, for any $\lambda_2=A_2\eta_2$ with $A_2>KA^\ast $ and $\eta_2\in D_{M-1}$, there exists a test $\phi_n$ such that
\begin{align*}
    &P_{\lambda^\ast }\phi_n(Y)\leq \ \exp\big\{-Cn\big\}\\
    \sup_{\lambda\in \bbR^M:\ d_F(\lambda,\lambda_2)<\frac{1}{4}d_F(\lambda^\ast ,\lambda_2)}
    &P_{\lambda}(1-\phi_n(Y))\leq \ \exp\big\{-Cn\big\}.
\end{align*}
By choosing $K$ large enough, we can assume that $\kappa_0KA^\ast/8 >\kappa+\kappa A^\ast/4 $. For any  $\lambda=A\eta$ satisfying $d_F(\eta,\eta_2)\leq \kappa_0/8$ and $|A-A_2|\leq 1$, by (B1) and $A_2>KA^\ast $ we have
\begin{align*}
    &d_F(\lambda,\lambda_2)\leq d_F(A\eta,A_2\eta)+d_F(A_2\eta,A_2\eta_2)\leq \kappa+\frac{1}{8}\kappa_0 A_2\\
    \leq &\frac{1}{4}(\kappa_0A_2-\kappa A^\ast )\leq\frac{1}{4}d_F(\lambda^\ast ,\lambda_2).
\end{align*}
Combining the above, we have that for any $\lambda_2=A_2\eta_2$ with $A_2>KA^\ast $ and $\eta_2\in D_{M-1}$,
\begin{align*}
    &P_{\lambda^\ast }\phi_n(Y)\leq \ \exp\big\{-Cn\big\}\\
    \sup_{|A-A_2|\leq1, d_F(\eta,\eta_2)\leq\kappa_0/8}
    &P_{\lambda}(1-\phi_n(Y))\leq \ \exp\big\{-Cn\big\}.
\end{align*}
Let $A_1^\ast ,\ldots,A_{J_1}^\ast $ be a $1$-covering net of the interval $[KA^\ast ,Cn\epsilon_n^2]$ with $J_1\leq Cn\epsilon_n^2$ and $\eta_1^\ast ,\ldots,\eta_{J_2}^\ast $ be a $\kappa_0/8$-covering net of $D_{M-1}$ with $\log J_2\leq C n\epsilon_n^2$ (by Lemma \ref{le:coen} part b with $B=1$). Let $\phi_{j}$ ($j=1,\ldots,J_1J_2$) be the corresponding tests associated with each combination of $(A_s^\ast ,\eta_t^\ast )$ for $s=1,\ldots,J_1$ and $t=1,\ldots,J_2$. Let $\phi_n=\max_{j}\phi_j$. Then for $n$ large enough,
\begin{equation}\label{eq:testA}
    \begin{aligned}
    &P_{\lambda^\ast }\phi_n(Y)\leq \ \exp\big\{\log(n\epsilon_n^2)+Cn\epsilon_n^2-Cn\big\}\leq \exp\big\{-Cn\}\\
    \sup_{\lambda=A\eta:A\in[KA^\ast ,Cn\epsilon_n^2], \eta\in D_{M-1}}
    &P_{\lambda}(1-\phi_n(Y))\leq \ \exp\big\{-Cn\big\}.
\end{aligned}
\end{equation}
Moreover, because $A\sim$ Ga$(a_0,b_0)$, we have
\begin{align}\label{eq:cpA}
    \Pi(\lambda\notin Cn\epsilon_n^2 D_{M-1})\leq \Pi(A>Cn\epsilon_n^2)\leq\exp\{-Cn\epsilon_n^2\}.
\end{align}
Combining \eqref{eq:testA} and \eqref{eq:cpA}, we can prove that $E_0\Pi(A\leq KA^\ast |Y)\to 0$ as $n\to\infty$ by the same arguments as in \eqref{eq:pcrproof}.

\section{Technical Proofs}

\subsection{Proof of Lemma \ref{le:conprob}}
The following lemma suggests that for any $m>0$, each point in $\Lambda$ or $D_{M-1}$ can be approximated by an $m$-sparse point in the same space with error at most $\sqrt{2\kappa/m}$.

\begin{lemma}\label{le:prep2}
Fix an integer $m\geq 1$. Assume (A1) and (B1).
\begin{enumerate}
  \item[a.] For any $\lambda^\ast \in \Lambda$, there exists a $\bar{\lambda}\in \Lambda$, such that $||\bar{\lambda}||_0\leq m$ and $d_{\Sigma}(\bar{\lambda},\lambda^\ast )\leq \sqrt{\frac{2\kappa}{m}}$.
  \item[b.] For any $\eta^\ast \in D_{M-1}$, there exists an $\bar{\eta}\in D_{M-1}$, such that $||\bar{\eta}||_0\leq m$ and $d_F(\bar{\eta},\eta^\ast )\leq \sqrt{\frac{2\kappa}{m}}$.
\end{enumerate}
\end{lemma}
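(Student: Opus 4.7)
The plan is to prove both parts by the classical Maurey/Barron probabilistic approximation argument, in which $\lambda^\ast$ (respectively $\eta^\ast$) is interpreted as a probability distribution on $M$ atoms and $\bar\lambda$ (respectively $\bar\eta$) is constructed as the empirical average of $m$ iid draws from that distribution.

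For part a, view $\lambda^\ast\in\Lambda$ as a probability distribution on $\{1,\ldots,M\}$, draw $\xi_1,\ldots,\xi_m$ iid with $P(\xi_i=j)=\lambda^\ast_j$, and set $\bar\lambda=m^{-1}\sum_{i=1}^m e_{\xi_i}$, where $e_j$ is the $j$th canonical basis vector of $\bbR^M$. By construction $\bar\lambda\in\Lambda$ (a convex combination of basis vectors), $||\bar\lambda||_0\leq m$, and $E[\bar\lambda]=\lambda^\ast$. A one-line second-moment calculation, using independence and $\mathrm{Cov}(e_{\xi_1})=\mathrm{diag}(\lambda^\ast)-\lambda^\ast(\lambda^\ast)^T$, gives
\[
E\,d_\Sigma(\bar\lambda,\lambda^\ast)^2
= \frac{1}{m}\,\mathrm{tr}\!\big(\Sigma\,(\mathrm{diag}(\lambda^\ast)-\lambda^\ast(\lambda^\ast)^T)\big)
\leq \frac{1}{m}\sum_{j=1}^M \Sigma_{jj}\lambda^\ast_j
\leq \frac{\kappa}{m}\leq\frac{2\kappa}{m},
\]
invoking (A1) for the bound on $\Sigma_{jj}$. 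By the probabilistic method, some realization of $\bar\lambda$ achieves $d_\Sigma(\bar\lambda,\lambda^\ast)\leq\sqrt{2\kappa/m}$, which is the required vector.

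For part b, the only twist is the handling of signs. Define signed atoms $z_j=\mathrm{sgn}(\eta^\ast_j)\,e_j$, which lie in $D_{M-1}$, and probability weights $p_j=|\eta^\ast_j|$ (a valid distribution because $||\eta^\ast||_1=1$). Let $\xi_1,\ldots,\xi_m$ be iid with $P(\xi_i=j)=p_j$ and set $\bar\eta=m^{-1}\sum_{i=1}^m z_{\xi_i}$. Then $E[\bar\eta]=\sum_j|\eta^\ast_j|\,\mathrm{sgn}(\eta^\ast_j)e_j=\eta^\ast$ and $||\bar\eta||_0\leq m$. The analogous second-moment computation with $\mathrm{Cov}(z_{\xi_1})=\mathrm{diag}(|\eta^\ast|)-\eta^\ast(\eta^\ast)^T$ yields
\[
E\,d_F(\bar\eta,\eta^\ast)^2
=\frac{1}{mn}\,\mathrm{tr}\!\big(F^TF\,(\mathrm{diag}(|\eta^\ast|)-\eta^\ast(\eta^\ast)^T)\big)
\leq\frac{1}{mn}\sum_{j=1}^M||F_j||_2^2\,|\eta^\ast_j|\leq\frac{\kappa}{m},
\]
by (B1) and $||\eta^\ast||_1=1$, so again the probabilistic method produces a realization $\bar\eta$ with the claimed error bound.

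The only subtlety I anticipate is that the random $\bar\eta$ constructed above satisfies $||\bar\eta||_1\leq 1$ rather than $=1$, because signed atoms can partially cancel in the sample average; if the stated conclusion $\bar\eta\in D_{M-1}$ is taken with equality, one can either rescale $\bar\eta\mapsto\bar\eta/||\bar\eta||_1$ (adjusting the constant on the right-hand side, with the approximation-to-$\eta^\ast$ quality controlling $|\,||\bar\eta||_1-1|$) or verify that the application in Section~\ref{se:pcr} uses $D_{M-1}$ as the closed $\ell_1$-ball. Beyond this bookkeeping, no real obstacle appears: the proof reduces to a variance-of-an-average calculation, and the $\sqrt{2\kappa/m}$ rate in the statement is even slack by a factor of $\sqrt 2$ compared with what the argument actually delivers.
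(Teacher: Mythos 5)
Your argument is essentially the paper's own proof: the Maurey/empirical-measure construction that draws $m$ iid indices from the distribution $(\lambda_j^\ast)$ (resp.\ $(|\eta_j^\ast|)$) and bounds the expected squared error via a variance calculation under (A1)/(B1), the paper writing $\bar\lambda$ as the multinomial proportion vector $(n_1/m,\ldots,n_M/m)$ and $\bar\eta_j=\mathrm{sgn}(\eta_j^\ast)\,n_j/m$, which coincides with your $m^{-1}\sum_i e_{\xi_i}$ and $m^{-1}\sum_i z_{\xi_i}$. Your closing worry that $\|\bar\eta\|_1$ might be $<1$ is a false alarm: every draw landing on coordinate $j$ carries the same fixed sign $\mathrm{sgn}(\eta_j^\ast)$, so no cancellation can occur and $\|\bar\eta\|_1=\sum_j n_j/m=1$ exactly, i.e.\ $\bar\eta\in D_{M-1}$ with no rescaling needed.
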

\begin{proof}
(Proof of a)
Consider a random variable $J\in\{1,\ldots,M\}$ with probability distribution $P(J=j)=\lambda^\ast _j$, $j=1,\ldots,M$. Let $J_1,\ldots,J_m$ be $m$ iid copies of $J$ and $n_j$ be the number of $i\in\{1,\ldots,n\}$ such that $(J_i=j)$. Then $(n_1,\ldots,n_M)\sim$ MN$(m,(\lambda_1^\ast ,\ldots,\lambda_M^\ast )$, where MN denotes the multinomial distribution. Let $V=(n_1/m,\ldots,n_M/m)\in \Lambda$. Then the expectation $E[V]$ of the vector $V$ is $\lambda^\ast $. Therefore, we have
\begin{align*}
    Ed_{\Sigma}^2(V,\lambda^\ast )=&\sum_{j,k=1}^M\Sigma_{jk}E\bigg(\frac{n_j}{m}-\lambda_j^\ast \bigg)\bigg(\frac{n_k}{m}-\lambda_k^\ast \bigg)\\
    =&\ \frac{1}{m}\sum_{j=1}^M\Sigma_{jj}\lambda_j^\ast (1-\lambda_j^\ast )-
    \frac{2}{m}\sum_{1\leq j<k\leq M}\Sigma_{jk}\lambda_j^\ast \lambda_k^\ast \\
    \leq &\ \frac{\kappa}{m}\sum_{j=1}^M\lambda_j^\ast (1-\lambda_j^\ast )+\frac{2\kappa}{m}\sum_{1\leq j<k\leq M}\lambda_j^\ast \lambda_k^\ast \ \leq\  \frac{2\kappa}{m},
\end{align*}
where we have used (A1), the fact that $|\Sigma_{jk}|\leq\Sigma_{jj}^{1/2}\Sigma_{kk}^{1/2}$ and $\sum_{j=1}^M\lambda_j^\ast =1$.
Since the expectation of $d_{\Sigma}^2(V,\lambda^\ast )$ is less than or equal to $2\kappa/m$, there always exists a $\bar{\lambda}\in \Lambda$ such that
$d_{\Sigma}(\bar{\lambda},\lambda^\ast )\leq \sqrt{2\kappa/m}$.

(Proof of b) The proof is similar to that of a. Now we define $J\in\{1,\ldots,M\}$ as a random variable with probability distribution $P(J=j)=|\eta^\ast _j|$, $j=1,\ldots,M$ and let $V=(\text{sgn}(\eta_1^\ast )n_1/m$, $\ldots$, sgn$(\eta_M^\ast )n_M/m)\in D_{M-1}$. The rest follows the same line as part a. under assumption (B1).
\end{proof}

Now, we can proceed to prove Lemma \ref{le:conprob}.

(Proof of a) Without loss of generality, we may assume that the index set of all nonzero components of $\lambda^\ast $ is $S_0=\{1,2,\ldots,s-1,M\}$.
Since $\sup_j\Sigma_{jj}\leq \kappa$ and $|\Sigma_{jk}|\leq\Sigma_{jj}^{1/2}\Sigma_{kk}^{1/2}\leq\kappa$,
\begin{align*}
    d_{\Sigma}(\lambda,\lambda^\ast )=\sum_{j,k=1}^M\Sigma_{jk}(\lambda_j-\lambda_j^\ast )(\lambda_k-\lambda_k^\ast )
    \leq \kappa||\lambda-\lambda^\ast ||_1^2.
\end{align*}
Therefore, for any $\epsilon>0$, $\{||\lambda-\lambda^\ast ||_1\leq \kappa^{-1/2}\epsilon\}\subset\{d_{\Sigma}(\lambda,\lambda^\ast )\leq\epsilon\}$. Since $|\lambda_M-\lambda_M^\ast |\leq\sum_{j=1}^{M-1}|\lambda_j-\lambda^\ast _j|$, for
$\delta_1=\kappa^{-1/2}\epsilon/(4M-4s)$ and $\delta_0=\kappa^{-1/2}\epsilon/(4s)$, we have
\begin{align*}
   \Lambda_{\epsilon}=\big\{\lambda\in \Lambda:\lambda_j\in (0,\delta_1], j\in S_0^c;\ |\lambda_j-\lambda^\ast _j|\leq \delta_0, j\in S_0-\{M\}\big\}\subset\{||\lambda-\lambda^\ast ||_1\leq\kappa^{-1/2}\epsilon\}.
\end{align*}

Combining the above conclusions yields
\begin{align*}
    &\Pi(d_{\Sigma}(\lambda,\lambda^\ast )\leq \epsilon)\
    \geq \ \Pi(\Lambda_{\epsilon})\\
    =&\int_{\Lambda_{\epsilon}}\frac{\Gamma(\alpha/M^{\gamma-1})}{\Gamma^M(\alpha/M^{\gamma})}
\prod_{j=1}^{M-1}\lambda_j^{\alpha/M^{\gamma}-1}
\bigg(1-\sum_{j=1}^{M-1}\lambda_j\bigg)^{\alpha/M^{\gamma}-1}d\lambda_1\cdots d\lambda_{M-1},
\end{align*}
where $\Gamma(\cdot)$ denotes the gamma function. By the facts that $\Gamma(x)\Gamma(1-x)=\pi/\sin(\pi x)$ for $x\in(0,1)$ and $c\triangleq\Gamma'(1)$ is finite, we have $\{x\Gamma(x)\}^{-1}=1-cx+O(x^2)$ for $x\in(0,1/2)$. Combining this with the fact that $\lambda_j\leq1$, we have
\begin{align*}
    \Pi(d_{\Sigma}(\lambda,\lambda^\ast )\leq \epsilon)
    \geq &\ \frac{\Gamma(\alpha/M^{\gamma-1})}{\Gamma^M(\alpha/M^{\gamma})}\bigg\{\prod_{j\in S_0-\{p\}}\int_{\min\{0,\lambda_j-\delta_0\}}^{\max\{1,\lambda_j+\delta_0\}}\lambda_j^{\alpha/M^{\gamma}-1}d\lambda_j\bigg\}
\bigg\{\prod_{j\in S_0^c}\int_{0}^{\delta_1}\lambda_j^{\alpha/M^{\gamma}-1}d\lambda_j\bigg\}\\
\gtrsim &\ \alpha^{-1}M^{\gamma-1}\alpha^{M}M^{-\gamma M} \delta_0^{s-1} \big(\alpha^{-1}M^{\gamma}\delta_1^{\alpha/M^{\gamma}}\big)^{M-s}\\
\gtrsim &\ \alpha^{s-1} M^{-\gamma (s-1)-1}\bigg(\frac{\epsilon}{s}\bigg)^{s-1}\bigg(\frac{\epsilon}{M-s}\bigg)^{\alpha M^{-(\gamma-1)}(1-s/M)}\\
\gtrsim &\ \exp\bigg\{-C\gamma s\log M-Cs\log\frac{s}{\epsilon}\bigg\}
\ \gtrsim\ \exp\bigg\{-C\gamma s\log\frac{M}{\epsilon}\bigg\},
\end{align*}
where we have used the assumption $\gamma\geq1$ and the fact $s\leq M$.

(Proof of b) For any integer $m>0$, let $\bar{\lambda}$ be the $m$-sparse approximation of $\lambda^\ast $ provided in Lemma \ref{le:prep2} part a. Then $d_{\Sigma}(\bar{\lambda},\lambda^\ast )\leq Cm^{1/2}$.
By the conclusion of Lemma \ref{le:conprob} part a, we have
\begin{align*}
    \Pi(d_{\Sigma}(\lambda,\bar{\lambda})\leq \epsilon)\gtrsim \exp\bigg\{-C\gamma m\log \frac{M}{\epsilon}\bigg\}.
\end{align*}
Therefore, by the triangle inequality, we have
\begin{align*}
    \Pi\bigg(d_{\Sigma}(\lambda,\lambda^\ast )\leq \epsilon+\frac{C}{\sqrt{m}}\bigg)\gtrsim \exp\bigg\{-C\gamma m\log \frac{M}{\epsilon}\bigg\}.
\end{align*}

(Proof of c) For the double Dirichlet distribution, the prior mass allocated to each orthant of $\bbR^M$ is $2^{-M}$. A direct application of part a will result a lower bound of order $e^{-CM}$, which is too small compare to our conclusion. Therefore, we need to adapt the proof of part a.

Let $S_0=\{1,2,\ldots,s-1,M\}$ be the index set of all nonzero components of $\eta^\ast $. Similar to the proof of part a, with the same $\delta_1$ and $\delta_0$  we define
\begin{align*}
   \Omega_{\epsilon}=\big\{\eta\in D_{M-1}:|\eta_j|\leq \delta_1, j\in S_0^c;\ |\eta_j-\eta^\ast _j|\leq \delta_0, j\in S_0-\{M\}\big\}.
\end{align*}
Similarly, it can be shown that $\Omega_{\epsilon}\subset\{d_F(\eta,\eta^\ast )\leq\epsilon\}$. So by the fact that $|\eta_j|\leq 1$, we have
\begin{align*}
    \Pi(d_F(\eta,\eta^\ast )\leq \epsilon)
    \geq &\ \frac{1}{2^M}\frac{\Gamma(\alpha/M^{\gamma-1})}{\Gamma^M(\alpha/M^{\gamma})}\bigg\{\prod_{j\in S_0-\{p\}}\int_{\eta_j-\delta_0}^{\eta_j+\delta_0}|\eta_j|^{\alpha/M^{\gamma}-1}d\eta_j\bigg\}
\bigg\{\prod_{j\in S_0^c}\int_{-\delta_1}^{\delta_1}|\eta_j|^{\alpha/M^{\gamma}-1}d\eta_j\bigg\}\\
\gtrsim &\ \frac{1}{2^M}\alpha^{-1}M^{\gamma-1}\alpha^{M}M^{-\gamma M} (2\delta_0)^{s-1} \big(2\alpha^{-1}M^{\gamma}\delta_1^{\alpha/M^{\gamma}}\big)^{M-s}\\
\gtrsim &\ \alpha^{s-1} M^{-\gamma (s-1)-1}\bigg(\frac{\epsilon}{s}\bigg)^{s-1}\bigg(\frac{\epsilon}{M-s}\bigg)^{\alpha M^{-(\gamma-1)}(1-s/M)}\\
\gtrsim &\ \exp\bigg\{-C\gamma s\log\frac{M}{\epsilon}\bigg\}.
\end{align*}
As we can seen, now each $\eta_j$ contributes an additional factor of $2$ to the prior concentration probability comparing to that of $\lambda_j$ in the proof of part a. This additional factor compensates for the $2^{-M}$ factor in the normalizing constant of the double Dirichlet distribution.

(Proof of d) The proof is similar to that of part b by instead combining the proof of part c and Lemma \ref{le:prep2} part b. Therefore, we omit the proof here.

\subsection{Proof of Corollary \ref{coro:pclr}}
By the triangle inequality and assumption (B1), we have
\begin{align*}
    d_F(\lambda,\lambda^\ast )\leq d_F(A\eta, A^\ast \eta)+d_F(A^\ast \eta, A^\ast \eta^\ast )
    \leq \kappa |A-A^\ast |+A^\ast d_F(\eta,\eta^\ast ).
\end{align*}
As a result, $\{|A-A^\ast |\leq \kappa^{-1}\epsilon;\ d_F(\eta,\eta^\ast )\leq (A^\ast )^{-1}\epsilon\}\subset\{d_F(\lambda,\lambda^\ast )\leq 2\epsilon\}$ and
\begin{align*}
    \Pi(d_F(\lambda,\lambda^\ast )\leq \epsilon)\geq \Pi(|A-A^\ast |\leq C\epsilon)\cdot
    \Pi(d_F(\eta,\eta^\ast )\leq C\epsilon).
\end{align*}
Since $\log \Pi(|A-A^\ast |\leq C\epsilon)\asymp \log\epsilon$, the conclusions can be proved by applying part c and part d in Lemma \ref{le:conprob}.

\subsection{Proof of Lemma \ref{le:coen}}
(Proof of a) For any $\lambda\in \mathcal{F}^{\Lambda}_{s,\epsilon}$, let $S(\lambda)$ be the index set of the $s$ largest $\lambda_j$'s. For any $\lambda\in\mathcal{F}^{\Lambda}_{s,\epsilon}$, if $\lambda'\in \Lambda$ satisfies
$\lambda'_j=0$, for $j\in S^c(\lambda)$ and $|\lambda'_j-\lambda_j|\leq\epsilon/s$, for $j\in S(\lambda)$,
then $d_{\Sigma}(\lambda,\lambda')\leq\kappa||\lambda'-\lambda||_1\leq 2\kappa\epsilon$. Therefore, for a fixed index set $S\subset\{1,\ldots,M\}$ with size $s$, the set of all grid points in $[0,1]^s$ with mesh size $\epsilon/s$ forms an $2\kappa\epsilon$-covering set for all $\lambda$ such that $S(\lambda)=S$. Since there are at most ${M\choose s}$ such an $S$, the minimal $2\kappa\epsilon$-covering set for $\mathcal{F}^{\Lambda}_{s,\epsilon}$ has at most
${M\choose s}\times \big(\frac{s}{\epsilon}\big)^s$
elements, which implies that
\begin{align*}
    \log N(2\kappa\epsilon, \mathcal{F}^{\Lambda}_{s,\epsilon}, ||\cdot||_1)\leq \log {M\choose s}+s\log\frac{s}{\epsilon}\lesssim s\log \frac{M}{\epsilon}.
\end{align*}
This proves the first conclusion.

For any $\eta\in \mathcal{F}^{\eta}_{B,s,\epsilon}$, let $S(\eta)$ be the index set of the $s$ largest $|\eta_j|$'s. Similarly, for any $\lambda=A\eta\in\mathcal{F}^{\eta}_{B,s,\epsilon}$, if $\eta'\in D_{M-1}$ satisfies
$\eta'_j=0$, for $j\in S^c(\eta)$ and $|\eta'_j-\eta_j|\leq\epsilon/(Bs)$, for $j\in S(\eta)$, and $A'\leq B$ satisfies $|A'-A|\leq\epsilon$,
then $d_F(A'\eta',A\eta)\leq\kappa||A'\eta'-A\eta||_1\leq\kappa|A-A'|
+B\kappa||\eta'-\eta||_1\leq 3\kappa\epsilon$. Similar to the arguments for $\mathcal{F}^{\Lambda}_{s,\epsilon}$, we have
\begin{align*}
    \log N(3\kappa\epsilon, \mathcal{F}^{\eta}_{B,s,\epsilon}, ||\cdot||_1)\leq \log {M\choose s}+s\log\frac{Bs}{\epsilon}+\log\frac{B}{\epsilon}\lesssim s\log \frac{M}{\epsilon}+s\log B.
\end{align*}

(Proof of b)  By Lemma \ref{le:prep2}, any $\lambda\in \Lambda$ and $\eta\in BD_{M-1}$ can be approximated by an $m$-sparse vector in the same space with error $Cm^{-1/2}$ and $CBm^{-1/2}$ respectively. Moreover, by the proof of Lemma \ref{le:prep2}, all components of such $m$-sparse vectors are multiples of $1/m$. Therefore, a minimal $C/\sqrt{m}$-covering set of $\Lambda$ has at most
${M+m-1\choose m-1}$ elements, which is the total number of nonnegative integer solutions $(n_1,\ldots,n_M)$ of the equation: $n_1+\cdots+n_M=m$.
Therefore,
\begin{align*}
   & \log N(C/\sqrt{m}, \Lambda, d_{\Sigma})\leq\log {M+m-1\choose m-1} \lesssim m\log M,\\
   &\log N(CB/\sqrt{m}, \Lambda, d_{\Sigma})\leq\log {M+m-1\choose m-1} +\log\frac{B}{B/\sqrt{m}}\lesssim m\log M.
\end{align*}

\subsection{Proof of Lemma \ref{le:cpp}}
(Proof of a)  Consider a random probability $P$ drawn from the Dirichlet process (DP) $DP\big((\alpha/M^{\gamma-1})U\big)$ with concentration parameter $\alpha/M^{\gamma-1}$ and the uniform distribution $U$ on the unit interval $[0,1]$. Then by the relationship between the DP and the Dirichlet distribution, we have
\begin{align*}
    (\lambda_1,\ldots,\lambda_M)\sim \big(P(A_1),\ldots,P(A_M)\big),
\end{align*}
with $A_k=[(k-1)/M,k/M)$ for $k=1,\ldots,M$.  The stick-breaking representation for DP \citep{sethuraman1994} gives
$Q=\sum_{k=1}^{\infty}w_k\delta_{\xi_k}$, a.s.
where $\xi_k\overset{iid}{\sim}U$ and
\begin{align*}
    w_k=w_k'\prod_{i=1}^{k-1}(1-w_i'), \text{ with } w_k'\overset{iid}{\sim}\text{Beta}(1,\alpha/M^{\gamma-1}).
\end{align*}
For each $k$, let $i(k)$ be the unique index such that $\xi_k\in A_{i(k)}$. Let $\lambda_{(1)}\geq\cdots\geq\lambda_{(M)}$ be an ordering of $\lambda_1,\ldots,\lambda_M$, then
\begin{align*}
    \sum_{j=1}^s\lambda_{(j)}\geq
   Q\bigg(\bigcup_{j=1}^sA_{i(j)}\bigg)
= \sum_{k:\xi_k\in\bigcup_{j=1}^sA_{i(j)}}w_k
\geq \sum_{k=1}^s w_k.
\end{align*}
Combining the above with the definition of $w_k$ provides
\begin{align*}
    \sum_{j=s+1}^M\lambda_{(j)}\leq 1- \sum_{k=1}^s w_k'\prod_{i=1}^{k-1}(1-w_i')=\prod_{k=1}^s(1-w_k')\triangleq \prod_{k=1}^s v_k,
\end{align*}
where $v_k=1-w_k'\overset{iid}{\sim}\text{Beta}(\alpha/M^{\gamma-1},1)$. Since $v_k\in(0,1)$, we have
$(\mathcal{F}^{\Lambda}_{s,\epsilon})^c=\big\{\sum_{j=s+1}^M\lambda_{(j)}\geq\epsilon\big\}\subset
   \big\{\prod_{k=1}^s v_k\geq\epsilon\big\}$. Because
   \[
   Ev_k^s=\int_{0}^1\frac{\alpha}{M^{\gamma-1}}t^{\alpha/M^{\gamma-1}+s-1}dt=\frac{\alpha}{\alpha+M^{\gamma-1}s}\leq \alpha M^{-(\gamma-1)}s^{-1},
   \]
an application of Markov's inequality yields
\begin{align*}
    \Pi\bigg\{\prod_{k=1}^s v_k\geq\epsilon\bigg\}\leq \epsilon^{-s}\prod_{k=1}^sEv_k^s\lesssim M^{-s(\gamma-1)}s^{-s}\epsilon^{-s}.
\end{align*}
As a result,
\begin{align*}
    \Pi(\lambda\notin\mathcal{F}_{s,\epsilon}^{\lambda})\leq \Pi\bigg\{\prod_{k=1}^s v_k\geq\epsilon\bigg\}\leq \exp\bigg(-Cs(\gamma-1)\log \frac{M}{\epsilon}\bigg).
\end{align*}

(Proof of b) The proof is similar to that of a since $(|\eta_1|,\ldots,|\eta_M|)\sim (\lambda_1,\ldots,\lambda_M)$ and $\Pi(A>B)\leq e^{-CB}$ for $A\sim$ Ga$(a_0,b_0)$.

\subsection{Proof of Lemma \ref{le:test}}
(Proof of a) Under (A3), the conclusion can be proved by applying Lemma 2.1 and Lemma 4.1 in \cite{Kleijn2006} by noticing the fact that $||\sum_{j=1}^M\lambda_jf_j-f^\ast ||_Q=d_{\Sigma}(\lambda,\lambda^\ast )$.

\noindent (Proof of b) Let $\psi(\lambda,Y)=\frac{1}{2\sigma^2}||Y-F\lambda||_2^2$. We construct the test function as $\phi_n(Y)=I(\psi(\lambda^*,Y)-\psi(\lambda_2,Y)\geq 0)$. By the choice of $\lambda^\ast$, under $P_0$ we can decomposition the response $Y$ as $Y=F\lambda^\ast+\zeta+\epsilon$, where $\epsilon\sim N(0,\sigma^2I_n)$ and $\zeta=F_0-F\lambda^\ast\in\bbR^d$ satisfying $F^T\zeta=0$. By Markov's inequality, for any $t<0$, we have
\begin{align}
   P_{\lambda^\ast}\phi_n(Y)=&\ P_{\lambda^\ast}(e^{t\{\psi(\lambda^\ast,Y)-\psi(\lambda_2,Y)\}}\geq 1)\nonumber\\
   \leq &\ E_{\lambda^\ast}\exp\bigg\{\frac{t}{2\sigma^2}
   \big(||\zeta+\epsilon||_2^2-||F(\lambda^\ast-\lambda_2)+\zeta+\epsilon||_2^2\big)\bigg\}\nonumber\\
   = &\ E_{\lambda^\ast}\exp\bigg\{\frac{t}{\sigma^2}(\lambda_2-\lambda^\ast)^TF^T\epsilon\bigg\}
   \exp\bigg\{-\frac{t}{2\sigma^2} nd_{F}^2(\lambda_2,\lambda^\ast)\bigg\}\nonumber\\
   = &\ \exp\big\{-t (2\sigma^2)^{-1}nd_F^2(\lambda_2,\lambda^\ast)
   +t^2\sigma^{-2}nd^2_F(\lambda_2,\lambda^\ast)\big\},\nonumber\\
   = &\ \exp\big\{-(16\sigma^2)^{-1}nd^2_F(\lambda_2,\lambda^\ast)\big\},\label{eq:test1}
\end{align}
with $t=\frac{1}{4}>0$, where we have used the fact that $\epsilon\sim N(0,\sigma^2 I_n)$ under $P_{\lambda^\ast}$ and $F^T\zeta=0$. Similarly, for any $\lambda\in\bbR^M$, under $P_{\lambda}$ we have $Y=F\lambda+\epsilon$ with $\epsilon\sim N(0,\sigma^2I_n)$. Therefore, for any $t>0$ we have
\begin{align}
   P_{\lambda}(1-\phi_n(Y))=&\ P_{\lambda}(e^{t\{\psi(\lambda_2,Y)-\psi(\lambda^\ast,Y)\}}\geq 1)\nonumber\\
   \leq &\ E_{\lambda}\exp\bigg\{\frac{t}{2\sigma^2}
   \big(||\epsilon-F(\lambda_2-\lambda)||_2^2-||\epsilon-F(\lambda^\ast-\lambda)||_2^2\big)\bigg\}\nonumber\\
   = &\  E_{\lambda}\exp\bigg\{-\frac{t}{\sigma^2}(\lambda_2-\lambda^\ast)^TF^T\epsilon\bigg\}
   \exp\bigg\{-\frac{t}{2\sigma^2} n(d_{F}^2(\lambda,\lambda^\ast)-d_{F}^2(\lambda,\lambda_2))\bigg\}\nonumber\\
   = &\ \exp\big\{-t (2\sigma^2)^{-1}n\big(d_{F}^2(\lambda,\lambda^\ast)-d_{F}^2(\lambda,\lambda_2)\big)
   +t^2\sigma^{-2}nd^2_F(\lambda_2,\lambda^\ast)\big\},\nonumber\\
   = &\ \exp\bigg\{-(16\sigma^2)^{-1}n\frac{\big(d_F^2(\lambda,\lambda^\ast)-d_F^2(\lambda,\lambda_2)\big)^2}
   {d^2_F(\lambda_2,\lambda^\ast)}\bigg\},\label{eq:test2}
\end{align}
with $t=\frac{1}{4}\big(d_F^2(\lambda,\lambda^\ast)-d_F^2(\lambda,\lambda_2)\big)/d^2_F(\lambda_2,\lambda^\ast)>0$ if $d_F(\lambda,\lambda^\ast)>d_F(\lambda,\lambda_2)$.

Combining \eqref{eq:test1} and \eqref{eq:test2} yields
\begin{align*}
    &P_{\lambda^\ast}\phi_n(Y)\leq \ \exp\big\{-(16\sigma^2)^{-1}nd^2_F(\lambda_2,\lambda^\ast)\big\}\\
    \sup_{\lambda\in\bbR^M:\ d_F(\lambda,\lambda_2)<\frac{1}{4}d_F(\lambda^\ast,\lambda_2)}
    &P_{\lambda}(1-\phi_n(Y))\leq \ \exp\big\{-(64\sigma^2)^{-1}nd^2_F(\lambda_2,\lambda^\ast)\big\}.
\end{align*}

\section*{Acknowledgments}
This research was supported by grant R01ES020619 from the National Institute of Environmental Health Sciences (NIEHS) of the National Institutes of Health
(NIH).

\vskip 1em \centerline{\Large \bf APPENDIX} \vskip 1em
\setcounter{subsection}{0}
\renewcommand{\thesubsection}{A.\arabic{subsection}}
\setcounter{equation}{0}
\renewcommand{\theequation}{A.\arabic{equation}}

In the appendix, we provide details of the MCMC implementation for CA and LA. The key idea is to augment the weight vector $\lambda=(\lambda_1,\ldots,\lambda_M)\sim$ Diri$(\rho,\ldots,\rho)$ by
$\lambda_j=T_j/(T_1+\cdots+T_M)$ with $T_j \overset{iid}{\sim} Ga(\rho,1)$ for $j=1,\ldots,M$ and conduct Metropolis Hastings updating for $\log T_j$'s. Recall that $F=(F_j(X_i))$ is the $n\times M$ prediction matrix.

\subsection{Convex aggregation}
By augmenting the Dirichlet distribution in the prior for CA, we have the following Bayesian convex aggregation model:
\begin{align*}
    Y_i=&\sum_{j=1}^M \lambda_j F_{ij}+\epsilon_i, \ \epsilon_i\sim N(0,1/\phi),\\
    \lambda_j=&\frac{T_j}{T_1+\cdots+T_M},\
    T_j \sim Ga(\rho,1),\ \phi\sim Ga(a_0,b_0).
\end{align*}
We apply a block MCMC algorithm that iteratively sweeps through the following steps, where superscripts ``O", ``P" and ``N" stand for ``old", ``proposal" and ``new" respectively:
\begin{description}
  \item[1. Gibbs updating for $\phi$:] Updating $\phi$ by sampling from
  $[\phi|-] \sim Ga(a_n,b_n)$ with
  \begin{align*}
    a_n=a_0+\frac{n}{2},\quad b_n=b_0+\frac{1}{2}\sum_{i=1}^n\bigg(Y_i-\sum_{j=1}^M\lambda_jF_{ij}\bigg)^2.
  \end{align*}
  \item[2. MH updating for $T$($\lambda$):] For $j=1$ to $M$, propose $T_j^{P}=T_j^{O}e^{\beta U_j}$, where $U_j\sim U(-0.5,0.5)$. Calculate $\lambda^P_j=T_j^P/(\sum_{j=1}^MT^P_j)$ and the log acceptance ratio
      \begin{align*}
        \log R=&\frac{\phi}{2}\sum_{i=1}^n\bigg(Y_i-\sum_{j=1}^M\lambda^P_jF_{ij}\bigg)^2
        -\frac{\phi}{2}\sum_{i=1}^n\bigg(Y_i-\sum_{j=1}^M\lambda^O_jF_{ij}\bigg)^2\quad(\text{log-likelihood})\\
       & +\sum_{j=1}^M\big((\rho-1)\log T_j^P-T_j^P\big)-\sum_{j=1}^M\big((\rho-1)\log T_j^O-T_j^O\big)\quad(\text{log-prior})\\
       & + \sum_{j=1}^M \log T_j^P-\sum_{j=1}^M \log T_j^O \quad(\text{log-transition probability}).
      \end{align*}
      With probability $\min\{1, R\}$, set $T_j^N=T_j^P$, $j=1,\ldots,M$ and with probability $1-\min\{1, R\}$, set $T_j^N=T_j^O$, $j=1,\ldots,M$.
      Set  $\lambda^N_j=T_j^N/(\sum_{j=1}^MT^N_j)$, $j=1,\ldots,M$.
\end{description}
In the above algorithm, $\beta$ serves as a tuning parameter to make the acceptance rate of $T$ around $40\%$.

\subsection{Linear aggregation}
By augmenting the double Dirichlet distribution in the prior for LA, we have the following Bayesian linear aggregation model:
\begin{align*}
    Y_i=&\sum_{j=1}^M \theta_j F_{ij}+\epsilon_i, \ \epsilon_i\sim N(0,1/\phi),\ \theta_j=Az_j\lambda_j,\
    \lambda_j=\frac{T_j}{T_1+\cdots+T_M},\\
    A \sim& Ga(c_0,d_0),\ z_j\sim Bernoulli(0.5),\ T_j \sim Ga(\rho,1),\ \phi\sim Ga(a_0,b_0).
\end{align*}

The MCMC updating of $T$ (or equivalently $\lambda$) and $\phi$ is the similar as those in the convex aggregation. In each iteration of the block MCMC algorithm, we add two additional steps for updating $z$ and $A$:

\begin{description}
  \item[3. MH updating for $A$:] Propose $A^{P}=A^{O}e^{\beta U}$, where $U_j\sim U(-0.5,0.5)$. Calculate $\lambda^P_j=\lambda^O_je^{\beta U}$ and the log acceptance ratio
      \begin{align*}
        \log R=&\frac{\phi}{2}\sum_{i=1}^n\bigg(Y_i-\sum_{j=1}^M\lambda^P_jF_{ij}\bigg)^2
        -\frac{\phi}{2}\sum_{i=1}^n\bigg(Y_i-\sum_{j=1}^M\lambda^O_jF_{ij}\bigg)^2\quad(\text{log-likelihood})\\
       & +\big((c_1-1)\log A^P-d_1 A^P\big)-\big((c_1-1)\log A^O-d_1 A^O\big)\quad(\text{log-prior})\\
       & + \log A^P- \log A^O \quad(\text{log-transition probability}).
      \end{align*}
      With probability $\min\{1, R\}$, set $A^N=A^P$ and with probability $1-\min\{1, R\}$, set $A^N=A^O$.
      Set  $\lambda^N_j=\lambda^O_j A^N/A^O$, $j=1,\ldots,M$.
  \item[4. MH updating for $z$:] For $j=1$ to $M$, propose $z_j^{P}=z_j^{O}V_j$, where $P(V_j=\pm 1)=0.5$. Calculate $\lambda^P_j=\lambda^O_j V_j$ and the log acceptance ratio
      \begin{align*}
        \log R=&\frac{\phi}{2}\sum_{i=1}^n\bigg(Y_i-\sum_{j=1}^M\lambda^P_jF_{ij}\bigg)^2
        -\frac{\phi}{2}\sum_{i=1}^n\bigg(Y_i-\sum_{j=1}^M\lambda^O_jF_{ij}\bigg)^2\quad(\text{log-likelihood}).
      \end{align*}
      With probability $\min\{1, R\}$, set $z_j^N=z_j^P$, $j=1,\ldots,M$ and with probability $1-\min\{1, R\}$, set $z_j^N=z_j^O$, $j=1,\ldots,M$.
      Set  $\lambda^N_j=\lambda^O_jz_j^{P}/z_j^O$, $j=1,\ldots,M$.
\end{description}

\bibliography{draft}
\end{document}